\newtheorem{theorem}{Theorem}[section]
\newtheorem{corollary}[theorem]{Corollary}
\newtheorem{lemma}[theorem]{Lemma}
\numberwithin{equation}{section}
\title[an inverse source problem]{Stability for an inverse source problem of the biharmonic operator}
\author[P. Li]{Peijun Li}
\address{Department of Mathematics, Purdue University, West Lafayette, Indiana
47907, USA}
\email{lipeijun@math.purdue.edu}
\author[X. Yao]{Xiaohua Yao}
\address{School of Mathematics and Statistics, China Central Normal University,
Wuhan, Hubei, China}
\email{yaoxiaohua@mail.ccnu.edu.cn}
\author[Y. Zhao]{Yue Zhao}
\address{School of Mathematics and Statistics, China Central Normal University,
Wuhan, Hubei, China}
\email{zhaoy@mail.ccnu.edu.cn}
\thanks{The research of PL is supported in part by the NSF grant DMS-1912704. The research of XY is supported in part by NSFC (No. 11771165). The research of YZ is supported in part by NSFC (No. 12001222).}
\begin{document}

\begin{abstract}
In this paper, we study for the first time the stability of the inverse source problem for the biharmonic operator with a compactly supported potential in $\mathbb R^3$. Firstly, to connect the boundary data with the unknown source, we shall consider an eigenvalue problem for the bi-Schr$\ddot{\rm o}$dinger operator $\Delta^2 + V(x)$ on a ball which contains the support of the potential $V$. We prove a Weyl-type law for the upper bounds of spherical normal derivatives of both the eigenfunctions $\phi$ and their Laplacian $\Delta\phi$ corresponding to the bi-Schr$\ddot{\rm o}$dinger operator. This type of upper bounds was proved by Hassell and Tao for the Schr$\ddot{\rm o}$dinger operator. Secondly, we investigate the meromorphic continuation of the resolvent of the bi-Schr$\ddot{\rm o}$dinger operator and prove the existence of a resonance-free region and an estimate of $L^2_{\rm comp} - L^2_{\rm loc}$ type for the resolvent. As an application, we prove a bound of the analytic continuation of the data from the given data to the higher frequency data. Finally, we derive the stability estimate which consists of the Lipschitz type data discrepancy and the high frequency tail of the source function, where the latter decreases as the upper bound of the frequency increases.
\end{abstract}

\keywords{resolvent estimate, inverse source problem, the biharmonic operator, stability}

\maketitle

\section{Introduction}

Consider the three-dimensional scattering problem for the biharmonic operator
\begin{align}\label{main_eq}
H u(x, \kappa) - \kappa^4u(x, \kappa) = f(x), \quad x \in \mathbb R^3,
\end{align}
where $H := \Delta^2 + V$, $\Delta$ is the Laplacian and  $V(x)$ is the potential, $\kappa>0$ is the wavenumber, and $f$ is the real-valued source term. We assume that $V(x)\in C_c^\infty(\mathbb R^3), V(x)\geq 0$ and both $f$ and $V$ have a compact support contained in $B_R = \{x\in\mathbb R^3 ~:~ |x|\leq R\}$, where $R>0$ is a constant. Let $\partial B_R$ be the boundary of $B_R$. An analogue of the Sommerfeld radiation condition is imposed to ensure the well-posedness of the problem (cf. \cite{ts_ipi}):
\begin{equation}\label{src}
\lim_{r\to\infty}r (\partial_r u-\mathrm{i}\kappa u)=0,\quad \lim_{r\to\infty}r (\partial_r (\Delta u)-\mathrm{i}\kappa (\Delta u))=0
\end{equation}
uniformly in all directions $\hat{x} = x/|x|$ with $r = |x|$. This paper is concerned with the inverse problem of determining the source function $f$ from the boundary measurements of $u(x, \kappa), \Delta u(x, \kappa)$ on $\partial B_R$ corresponding to the wavenumber $\kappa$ given in a finite interval.

The inverse scattering problems have played a fundamental role in diverse scientific areas such as radar and sonar, geophysical exploration, and medical imaging. The inverse problems for biharmonic operators have important applications in the study of elasticity and the theory of vibrations of beams, e.g., the beam equation \cite{GGS}, the hinged plate configurations \cite{GGS}, and the scattering by grating stacks \cite{MMM}. Compared with the inverse scattering problems for acoustic, elastic, and electromanetic waves, the inverse scattering problems for the biharmonic operators are much less studied. In fact, not only the increase of the order leads to the failure of the methods which work for the second order equations, but the properties of the solutions themselves become more involved \cite{Mayboroda}.
We refer to \cite{AP, Iwasaki, THS, ts_ipi} for the inverse scattering problems of higher order operators. The inverse boundary value problems for bi- and poly-harmonic operators can be found in \cite{Assy, CH, isakov, KLU, LKU, Yang}. The available results are mainly concerned with the inverse problem of determining the first order perturbation of the  form $A(x)\cdot \nabla + q(x)$ of the bi- and poly-harmonic operators by using either the far-field pattern or the Dirichlet-to-Neumann map on the boundary. A numerical study can be found in \cite{Xu} for an inverse random source for the biharmonic equation. To the best of our knowledge, the uniqueness and stability are open on the inverse source problem for the biharmonic operators.
 
 In general, it is known that there is no uniqueness for the inverse source problems at a fixed frequency. For example, if the source term $f:=H\varphi - \kappa^4\varphi$ where $\varphi\in C_0^\infty(B_R)$, it is easy to know that the uniqueness does not hold in this case. Computationally, a more serious issue is the lack of stability, i.e., a small variation of the data might lead to a huge error in the reconstruction. Hence it is crucial to study the stability of the inverse source problems. Recently, it has been realized that the use of multifrequency data is an effective approach to overcome the difficulties of non-uniqueness and instability which are encountered at a single frequency. The first stability result was obtain in \cite{blt} for the inverse source problem of the Helmholtz equation by using multifrequency data. The increasing stability was studied for the inverse source problems of the acoustic, elastic and electromagnetic wave equations \cite{blz, CIL, ei-18, ei-19, li2017increasing,
 LZZ}. A topic review can be found in \cite{BLLT-IP-15} on the general inverse scattering problems with multifrequency. 
 
Motivated by \cite{LZZ}, we intend to study the stability on the inverse source problem for the perturbed biharmonic operators by using multi-wavenumber data. We consider an eigenvalue problem for the biharmonic operator with a zeroth order perturbation and deduce an integral equation, which connects
the scattering data $u(x, \kappa)\vert_{\partial B_R}, \Delta u(x, \kappa)\vert_{\partial B_R}$ and the unknown source function $f$. Then we study the corresponding
resolvent of the biharmonic operator to obtain a resonance-free region of the data with respect to the complex wavenumber $\kappa$ and the bound of the analytic continuation of the data from the given data to the higher wavenumber data. It should be pointed out that in this work we obtain a resonance-free region for the resolvent and prove the resolvent estimate in this region. Then the well-posedness of the direct scattering problem follows. Furthermore, the results on the resolvent play a crucial role in the study of the inverse scattering problem, and they are also interesting in themselves.  The stability estimate consists of the Lipschitz type of data discrepancy and the high wavenumber tail of the source function. The latter decreases as the wavenumber of the data increases, which implies that the inverse problem is more stable when the higher wavenumber data is used. We also mention that only the Dirichlet data is required for the analysis.

The paper is organized as follows. In section \ref{without}, we show the increasing stability of the inverse source problem for the biharmonic operator without the zeroth order perturbation. Section \ref{with} is devoted to the general case where the biharmonic operator has a nontrivial potential. In both sections, the resolvent is studied for the corresponding biharmonic operator and its resonance-free region and upper bound are obtained, which lead to the well-posedness of the direct scattering  problems and are crucial for the stability analysis of the inverse source problem by using discrete multi-wavenumber data. The paper concludes with some general remarks in section \ref{con}.  

\section{Stability without zeroth order perturbation}\label{without}

In this section, we discuss the well-posedness of the direct and inverse problems for the biharmonic operator without the zeroth order perturbation. 

\subsection{Resolvent estimate}

We begin with the resolvent estimate of $H_0 := \Delta^2$, which is the biharmonic operator without the zeroth order perturbation. Clearly, the operator $H_0 $ is self-adjoint on $L^2(\mathbb R^3)$ with the Sobolev domain $H^4(\mathbb R^3)$. It follows from the Fourier transform that 
\begin{align*}
\widehat{H_0f}(\xi) = |\xi|^4\hat{f}(\xi), \quad f \in H^4(\mathbb R^3),
\end{align*}
which immediately  deduces the spectrum of $H_0$:
\[
\sigma(H_0) = \{z=|\xi|^4: \xi\in\mathbb R^3\} = [0, +\infty).
\]
Hence the resolvent $(H_0 - z)^{-1}$ of $H_0$ is analytic for $z\in\mathbb{C}\backslash [0, +\infty)$ in the uniform operator topology of $\mathcal B(L^2, L^2)$, where $\mathcal B(L^2, L^2)$ denotes the set of all bounded operators of $L^2(\mathbb R^3)$.

It is clear to note that the holomorphic map $F : \lambda \rightarrow \lambda^4$ takes the first quadrant $$\Sigma := \Big\{\lambda\in\mathbb C: 0<{\rm arg}\lambda<\frac{\pi}{2}\Big\}$$ bijectively onto the set $\sigma(H_0) = \mathbb{C}\backslash [0, +\infty)$, which is shown in Figure \ref{map}. 

\begin{figure}
\centering
\includegraphics[width=1.0\textwidth]{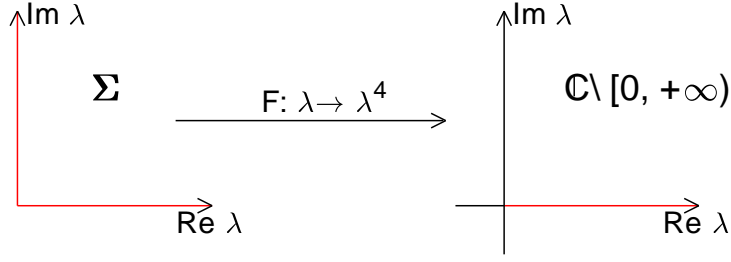}
\caption{The holomorphic map of $F$.}
\label{map}
\end{figure}

Let $z = \lambda^4$. The family of operators $R_0(\lambda) := (H_0 - \lambda^4)^{-1}$ are analytical on the first quadrant $\Sigma$ and satisfy the resolvent estimate
\begin{align*}
\|R_0(\lambda)\|_{L^2(\mathbb R^3)\to L^2(\mathbb R^3)}\leq \frac{1}{{\rm dist}(\lambda^4, [0, \infty))},\quad \lambda\in \Sigma.
\end{align*}
Recall that the operator $(-\Delta - \lambda^2)^{-1}$ is well-defined on $L^2(\mathbb R^3)$ for $\Im\lambda >0$ via the explicit expression
\begin{align*}
(-\Delta - \lambda^2)^{-1}(f) = \int_{\mathbb R^3} \frac{e^{{\rm i}\lambda|x-y|}}{4\pi|x-y|} f(y){\rm d}y.
\end{align*}
For $\lambda\in\Sigma$, using the identity 
\begin{align*}
R_0(\lambda) = (\Delta^2 - \lambda^4)^{-1} = \frac{1}{2\lambda^2}[ (-\Delta - \lambda^2)^{-1} - (-\Delta^2 + \lambda^2)^{-1} ],
\end{align*}
we have 
\begin{align*}
(R_0(\lambda)f)(x) = \int_{\mathbb R^3} R_0(x, y, \lambda) f(y) {\rm d}y,
\end{align*}
where 
\begin{align}\label{kernel}
R_0(\lambda, x, y) = \frac{1}{2\lambda^2} \Big( \frac{e^{{\rm i}\lambda|x-y|}}{4\pi|x-y|} - \frac{e^{-\lambda|x-y|}}{4\pi|x-y|} \Big), \quad \lambda\in\Sigma.
\end{align}

It is easy to verify that the kernel $R_0(\lambda, x, y)$ satisfies the radiation condition \eqref{src} for fixed $x$ or $y$ and $\lambda>0$. Furthermore, we can see from \eqref{kernel} that for fixed $x$ and $y$, $R_0(\lambda)$ is a meromorphic function of $\lambda$ on $\mathbb{C}$ and defines an operator $C_0^\infty(\mathbb R^3) \to C^\infty(\mathbb R^3)$, which is unbounded on $L^2(\mathbb R^3)$ for $\Im\lambda\leq 0$ or $\Re\lambda\geq 0.$
However, if we consider $R_0(\lambda)$ as an operator mapping $L^2_{\rm comp}(\mathbb R^3)$ onto $L^2_{\rm loc}(\mathbb R^3)$ in the sense that for any $\rho\in C_0^\infty(\mathbb R^3)$,  the operator $\rho R_0(\lambda) \rho: L^2(\mathbb R^3)\to L^2(\mathbb R^3)$ is bounded, then the operator $R_0(\lambda)$ can be extended into a meromorphic family of operators for all $\lambda \in \mathbb{C}$. 

The following theorem concerns a resonance-free region and an estimate for the resolvent $R_0(\lambda)$, which play a crucial role in the stability analysis for the inverse problem. Hereafter, the notation $a\lesssim b$ stands for $a\leq Cb,$ where $C>0$ is a generic constant which may change step by step in the proofs.

\begin{theorem}\label{free_estimate}
The resolvent operator $R_0(\lambda)$ for $\lambda\in\Sigma$ has the following estimate:
\begin{align}\label{spectral1}
\|R_0(\lambda)\|_{L^2(\mathbb R^3)\to L^2(\mathbb R^3)}\leq \frac{1}{|\lambda|^2(\Im\lambda)(\Re\lambda)}, \quad \lambda\in\Sigma.
\end{align}
Moreover, the operator $R_0(\lambda)$ can be extended into an analytic family of operators for all $\lambda\in\mathbb{C}\backslash\{0\}$ as
\begin{align*}
R_0(\lambda): L^2_{\rm comp}(\mathbb R^3)\to L^2_{\rm loc}(\mathbb R^3),
\end{align*}
such that for each $\rho\in C_0^\infty(\mathbb R^3)$ with ${\rm supp}(\rho)\subset B_R$ and $\lambda\neq 0$
\begin{align}\label{free}
\|\rho R_0(\lambda) \rho\|_{L^2(B_R)\rightarrow H^j(B_R)}\lesssim |\lambda|^{-2}(1+\lambda^2)^{\frac{j}{2}} \big(e^{2R (\Im\lambda)_-} + e^{2R (\Re\lambda)_-}\big), \quad j=0, 1, 2, 3, 4,
\end{align}
where $t_{-}:=\max\{-t,0\}$. 
\end{theorem}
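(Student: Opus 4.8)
The plan is to prove the two assertions separately, using throughout the factorization of $R_0(\lambda)$ into free Helmholtz resolvents. Write $R_1(\mu):=(-\Delta-\mu^2)^{-1}$, whose kernel is $e^{{\rm i}\mu|x-y|}/(4\pi|x-y|)$ for $\Im\mu>0$. The identity preceding \eqref{kernel} then reads $R_0(\lambda)=\frac{1}{2\lambda^2}[R_1(\lambda)-R_1({\rm i}\lambda)]$, and since $(-\Delta-\lambda^2)(-\Delta+\lambda^2)=\Delta^2-\lambda^4$ one also has the composition form $R_0(\lambda)=R_1(\lambda)R_1({\rm i}\lambda)$, reflecting the gain of four derivatives.

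For the spectral bound \eqref{spectral1} I would use self-adjointness of $-\Delta$, which gives $\|R_1(\mu)\|_{L^2\to L^2}=\operatorname{dist}(\mu^2,[0,\infty))^{-1}$. Writing $\lambda=a+{\rm i}b\in\Sigma$ with $a,b>0$, we have $\Im(\lambda^2)=2ab$ and $\Im(-\lambda^2)=-2ab$; since $\operatorname{dist}(z,[0,\infty))\ge|\Im z|$, both $\|R_1(\lambda)\|$ and $\|R_1({\rm i}\lambda)\|$ are at most $(2ab)^{-1}=(2\Re\lambda\,\Im\lambda)^{-1}$. Substituting into $R_0(\lambda)=\frac{1}{2\lambda^2}[R_1(\lambda)-R_1({\rm i}\lambda)]$ and applying the triangle inequality yields $\|R_0(\lambda)\|_{L^2\to L^2}\le(2|\lambda|^2\Re\lambda\,\Im\lambda)^{-1}$, which implies \eqref{spectral1}; this is cleaner than estimating $\operatorname{dist}(\lambda^4,[0,\infty))$ directly, which would require splitting into sectors.

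For the continuation and \eqref{free}, the analytic extension to $\C\setminus\{0\}$ is read off from \eqref{kernel}: for fixed $x\ne y$ the kernel is entire in $\lambda$ apart from the factor $\lambda^{-2}$, and expanding the bracket shows that $\lambda=0$ is only a simple pole, while the absence of other poles reflects the fact that the three-dimensional Helmholtz kernel is entire (no resonances). The case $j=0$ follows from the pointwise bound: for $x,y\in B_R$ we have $|x-y|\le 2R$, so $|e^{{\rm i}\lambda|x-y|}|\le e^{2R(\Im\lambda)_-}$ and $|e^{-\lambda|x-y|}|\le e^{2R(\Re\lambda)_-}$, whence
\[
|R_0(\lambda,x,y)|\le\frac{e^{2R(\Im\lambda)_-}+e^{2R(\Re\lambda)_-}}{8\pi|\lambda|^2\,|x-y|},
\]
and the operator with kernel $|x-y|^{-1}$ is bounded on $L^2(B_R)$ by Schur's test since $|x-y|^{-1}$ is integrable over $B_R\subset\R^3$; this gives \eqref{free} with $j=0$. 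The main work is the case $j=4$. Setting $g=\rho f$ and $u=R_0(\lambda)g$, the continued resolvent still satisfies $\Delta^2u=\lambda^4u+g$, and I would apply the elliptic estimate $\|v\|_{H^4(\R^3)}\lesssim\|\Delta^2v\|_{L^2}+\|v\|_{L^2}$ (immediate from $\langle\xi\rangle^4\lesssim1+|\xi|^4$) to $v=\rho u$. Expanding $\Delta^2(\rho u)=\rho(\lambda^4u+g)+[\Delta^2,\rho]u$, the leading term $\lambda^4\rho u$ is controlled by the $j=0$ bound as $|\lambda|^4\cdot|\lambda|^{-2}(e^{2R(\Im\lambda)_-}+e^{2R(\Re\lambda)_-})\|f\|$, which is exactly of size $|\lambda|^{-2}(1+\lambda^2)^2$; the commutator $[\Delta^2,\rho]$ is third order and supported in $\{\nabla\rho\ne0\}\subset B_R$, hence lower order in $|\lambda|$, and is absorbed by a standard finite ladder of cutoffs $\rho\prec\rho_1\prec\cdots$ in $B_R$ together with $\|w\|_{H^3}\le\varepsilon\|w\|_{H^4}+C_\varepsilon\|w\|_{L^2}$, each cutoff again obeying the $j=0$ kernel bound.

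The remaining orders $j=1,2,3$ follow by interpolation. Writing $E:=e^{2R(\Im\lambda)_-}+e^{2R(\Re\lambda)_-}$, the two endpoint bounds read $\|\rho u\|_{L^2}\lesssim|\lambda|^{-2}E\|f\|$ and $\|\rho u\|_{H^4}\lesssim|\lambda|^{-2}(1+\lambda^2)^2E\|f\|$, so the inequality $\|\rho u\|_{H^j}\lesssim\|\rho u\|_{H^4}^{j/4}\|\rho u\|_{L^2}^{1-j/4}$ produces $\|\rho u\|_{H^j}\lesssim|\lambda|^{-2}(1+\lambda^2)^{j/2}E\|f\|$, matching \eqref{free} exactly including the exponential factor. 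I expect the genuine obstacle to be the $j=4$ estimate—specifically, making the commutator absorption rigorous while extracting precisely the $|\lambda|^2$ growth from the $\lambda^4u$ term—whereas the $L^2$ bound, the meromorphic continuation, and the interpolation step are comparatively routine.
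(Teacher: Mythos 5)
Your proposal is correct, and its first half takes a genuinely different route from the paper. For the spectral bound \eqref{spectral1}, the paper works directly with $\|R_0(\lambda)\|\leq 1/\mathrm{dist}(\lambda^4,[0,\infty))$ and computes that distance by an explicit case analysis on whether $(\Re\lambda)^4+(\Im\lambda)^4\gtrless 6(\Re\lambda)^2(\Im\lambda)^2$; you instead push the partial-fraction identity $R_0(\lambda)=\frac{1}{2\lambda^2}[(-\Delta-\lambda^2)^{-1}-(-\Delta+\lambda^2)^{-1}]$ through the triangle inequality, bounding each Helmholtz resolvent by $|\Im(\pm\lambda^2)|^{-1}=(2\Re\lambda\,\Im\lambda)^{-1}$. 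Your argument is shorter, avoids the sector splitting, and in fact yields the slightly stronger constant $\frac{1}{2}|\lambda|^{-2}((\Re\lambda)(\Im\lambda))^{-1}$; the paper's computation gives the sharp value of $\mathrm{dist}(\lambda^4,[0,\infty))$ but buys nothing extra for the stated estimate. For the remainder the two proofs essentially coincide: the $j=0$ case of \eqref{free} follows from the same pointwise kernel bound, with you invoking Schur's test where the paper uses the Hilbert--Schmidt norm $\iint|x-y|^{-2}$ (the Hilbert--Schmidt route has the small extra payoff of giving compactness for free, which the paper reuses when it applies analytic Fredholm theory to $VR_0\rho$ in Section 3); and for $j=4$ the paper applies the interior elliptic estimate $\|u\|_{H^4(\Omega)}\lesssim\|u\|_{L^2(W)}+\|\Delta^2 u\|_{L^2(W)}$ with a nested cutoff $\tilde\rho$, which packages exactly the commutator/ladder absorption you carry out by hand after applying the global estimate to $\rho u$. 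Your identification of $\lambda^4 R_0(\lambda)\rho f$ as the term producing the $(1+\lambda^2)^2$ growth, and the interpolation for $j=1,2,3$, match the paper.
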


\begin{proof}
First we show \eqref{spectral1}. Following the identity 
\[
\lambda^4 = (\Re\lambda)^4 + (\Im\lambda)^4 - 6(\Re\lambda)^2(\Im\lambda)^2 + 
4{\rm i}(\Re\lambda)(\Im\lambda)((\Re\lambda)^2 - (\Im\lambda)^2),
\]
we have 
\begin{align*}
{\rm dist}&(\lambda^4, [0, \infty))=\\
&\begin{cases}
4(\Re\lambda)(\Im\lambda)|(\Re\lambda)^2 - (\Im\lambda)^2| &\quad\text{if} ~  (\Re\lambda)^4 + (\Im\lambda)^4\geq 
6(\Re\lambda)^2(\Im\lambda)^2;\\
|\lambda|^4 & \quad\text{if} ~ (\Re\lambda)^4 + (\Im\lambda)^4<6(\Re\lambda)^2(\Im\lambda)^2.
\end{cases}
\end{align*}
Note that when $ (\Re\lambda)^4 + (\Im\lambda)^4\geq 6(\Re\lambda)^2(\Im\lambda)^2$, one has
\[
|\lambda|^4 = ((\Re\lambda)^2 + (\Im\lambda)^2)^2 \leq 2((\Re\lambda)^2 - (\Im\lambda)^2)^2,
\]
which yields $|\lambda|^2\leq \sqrt{2}|(\Re\lambda)^2 - (\Im\lambda)^2|$. On the other hand, when $(\Re\lambda)^4 + (\Im\lambda)^4<6(\Re\lambda)^2(\Im\lambda)^2$, we have
\[
|\lambda|^2 = (\Re\lambda)^2 + (\Im\lambda)^2 \geq 2|\lambda|^2 (\Re\lambda)(\Im\lambda), \quad \lambda\in\Sigma,
\]
which gives \eqref{spectral1}.

Next is to show the analytical extension. Let $\rho\in C_0^\infty(\mathbb R^3)$, then for each $\lambda\neq 0$,  we can define the operator $\rho R_0(\lambda)\rho$ by
\begin{align}\label{kernel2}
(\rho R_0(\lambda)\rho f)(x) = \frac{1}{8\pi\lambda}\int_{\mathbb R^3}\rho(x)\frac{e^{{\rm i}\lambda|x-y|} - e^{-\lambda|x-y|}}{|x-y|}\rho(y)f(y){\rm d}y.
\end{align}
It can be verified that $\rho R_0(\lambda)\rho$ is bounded on $L^2(\mathbb R^3)$ and satisfies the estimate
\begin{align}\label{free2}
\|\rho R_0(\lambda)\rho\|_{L^2(B_R)\rightarrow L^2(B_R)} &\leq \frac{1}{8\pi|\lambda|^2}\Big(e^{2R(\Im\lambda)_-} + e^{2R(\Re\lambda)_-}\Big)\times\notag\\
&\qquad \Big(\int_{B_R}\int_{B_R} \rho^2(x)\frac{1}{|x-y|^2}\rho^2(y){\rm d}x{\rm d}y\Big)^{1/2}\notag\\
&\leq C |\lambda|^{-2}(e^{2R(\Im\lambda)_-} + e^{2R(\Re\lambda)_-}),
\end{align}
which actually implies that the operator $\rho R_0(\lambda)\rho$ belongs to the Hilbert--Schmidt class. Moreover, for any $f, g\in L^2(\mathbb R^3)$, by the explicit expression \eqref{kernel2} of $\rho R_0(\lambda)\rho$, it is easy to prove that the function $I_0(\lambda):=\langle \rho R_0(\lambda)\rho f, g\rangle_{L^2(\mathbb R^3)}$ is an analytic function in $\mathbb{C}\backslash\{0\}$ and $\lambda = 0$ is the only simple pole. Consequently, $\rho R_0(\lambda)\rho$ is an analytic family of compact operators for $\lambda\in\mathbb{C}\backslash\{0\}$.

It suffices to prove the case $j=4$ in order to prove \eqref{free}. By the standard elliptic estimate, one has for any $\Omega\subset\subset W \subset \mathbb{R}^3$ that 
\begin{align*}
\|u\|_{H^4(\Omega)}\leq C\big(\|u\|_{L^2(W)} + \|\Delta^2u\|_{L^2(W)}\big).
\end{align*}
Taking $\tilde{\rho}\in C^\infty_0(B_R)$ such that $\tilde{\rho} = 1$ near the support of $\rho$, we obtain 
\[
\|\rho u\|_{H^4(B_R)}\leq C\big(\|\tilde{\rho}u\|_{L^2(\mathbb{R}^3)} + \|\tilde{\rho}\Delta^2u\|_{L^2(\mathbb{R}^3)}\big).
\]
Thus letting $u = R_0(\lambda)(\rho f), f\in L^2(B_R)$ gives 
\[
\|\rho R_0(\lambda)(\rho f)\|_{H^4(B_R)}\leq C\big(\|\tilde{\rho}R_0(\lambda)(\rho f)\|_{L^2(\mathbb R^3)} + \|\tilde{\rho}\Delta^2(R_0(\lambda)(\rho f))\|_{L^2(\mathbb R^3)}\big).
\]
Noting that 
\begin{align*}
\|\tilde{\rho}\Delta^2(R_0(\lambda)(\rho f))\|_ {L^2(\mathbb R^3)}&=\| \rho f + \tilde{\rho}\lambda^4R_0(\lambda)(\rho f)\|_{L^2(\mathbb R^3)}\\& \lesssim (1+\lambda^2) \big(e^{2R(\Im\lambda)_-} + e^{2R(\Re\lambda)_-}\big)\|f\|_{L^2(B_R)},
\end{align*}
we hence obtain that
\[
\|\rho R_0(\lambda)\rho\|_{L^2(B_R)\rightarrow H^4(B_R)}\lesssim |\lambda|^{-2} (1+ \lambda^2)^2 
\big(e^{2R(\Im\lambda)_-} + e^{2R(\Re\lambda)_-}\big).
\]
Finally, the cases for $j = 1, 2, 3$ follow by the interpolation between $j=0$ and $j=4$.
\end{proof}

It follows from Theorem \ref{free_estimate} that the scattering problem \eqref{main_eq}--\eqref{src} has a unique solution for all the positive wavenumbers when $V(x)\equiv 0$, which is stated in the following result.

\begin{corollary}
Let $V(x)\equiv 0$. For any $\kappa>0$, the scattering problem \eqref{main_eq}--\eqref{src} admits a unique solution $u\in H^4(B_R)$ such that 
\[
\|u\|_{H^4(B_R)}\lesssim \|f\|_{L^2(B_R)}.
\]
\end{corollary}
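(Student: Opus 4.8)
The plan is to build the solution directly from the meromorphically continued resolvent of Theorem \ref{free_estimate} and then to prove uniqueness separately by a Rellich-type argument; the latter is necessary because the bound \eqref{spectral1} degenerates as $\Im\lambda\to 0$ and therefore says nothing at real wavenumbers. For existence, given $\kappa>0$ I would set $u:=R_0(\kappa)f$, where $R_0(\kappa)$ is the analytic continuation from Theorem \ref{free_estimate} evaluated at $\lambda=\kappa$. Since $f\in L^2(B_R)$ is compactly supported, $u$ is well defined in $L^2_{\rm loc}(\mathbb{R}^3)$, and because $R_0(\kappa)$ is the Green's operator of $\Delta^2-\kappa^4$ it satisfies $\Delta^2 u-\kappa^4 u=f$. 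The radiation conditions \eqref{src} are inherited from the kernel \eqref{kernel}, which, as already noted, satisfies \eqref{src} in $x$ for each fixed $y$ and $\kappa>0$; integrating against the compactly supported $f$ preserves this.

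For the quantitative bound I would invoke \eqref{free} with $j=4$ at the real point $\lambda=\kappa$. Since $\kappa>0$ one has $(\Im\kappa)_-=(\Re\kappa)_-=0$, so the exponential factor reduces to a constant. Choosing a cutoff $\chi\in C_0^\infty(\mathbb{R}^3)$ with $\chi\equiv 1$ on $B_R$ and support in a slightly larger ball, and noting $\chi f=f$, the estimate \eqref{free} yields $\|u\|_{H^4(B_R)}\le\|\chi u\|_{H^4}\lesssim \kappa^{-2}(1+\kappa^2)^2\|f\|_{L^2(B_R)}$. The $j=4$ case simultaneously gives $u\in H^4(B_R)$ as required, and for fixed $\kappa>0$ the $\kappa$-dependent factor is absorbed into $\lesssim$.

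The substantive part is uniqueness. If $u_1,u_2$ are two solutions, then $w:=u_1-u_2$ solves $\Delta^2 w-\kappa^4 w=0$ in $\mathbb{R}^3$ and satisfies both conditions in \eqref{src}. I would use the factorization $\Delta^2-\kappa^4=(\Delta-\kappa^2)(\Delta+\kappa^2)$ and set $v:=(\Delta-\kappa^2)w$, so that $\Delta v+\kappa^2 v=0$. A direct computation gives $\partial_r v-\mathrm{i}\kappa v=[\partial_r(\Delta w)-\mathrm{i}\kappa\Delta w]-\kappa^2[\partial_r w-\mathrm{i}\kappa w]$, so both bracketed terms are $o(r^{-1})$ and $v$ inherits the Sommerfeld radiation condition; by Rellich's uniqueness theorem for the homogeneous Helmholtz equation (together with unique continuation, as $v$ is real-analytic), $v\equiv 0$. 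Hence $w$ solves the modified Helmholtz equation $\Delta w=\kappa^2 w$ while still satisfying $\partial_r w-\mathrm{i}\kappa w=o(r^{-1})$. Applying Green's identity on $B_\rho$ gives $\int_{B_\rho}(|\nabla w|^2+\kappa^2|w|^2)=\int_{\partial B_\rho}\bar w\,\partial_r w$; inserting the radiation condition into the boundary term and using Cauchy--Schwarz forces $\int_{\partial B_\rho}|w|^2\to 0$ and hence the whole boundary integral to vanish as $\rho\to\infty$. Since the left-hand side is nonnegative and nondecreasing in $\rho$, it must be identically zero, whence $w\equiv 0$.

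The main obstacle is precisely this uniqueness step. One cannot extract uniqueness from the resolvent bound because \eqref{spectral1} blows up on the positive real axis, so the argument must instead exploit the explicit radiation structure of the kernel and the factorization into a Helmholtz and a modified-Helmholtz factor. The delicate points are verifying that the two radiation conditions in \eqref{src} combine so that the Helmholtz factor $v$ is genuinely outgoing, allowing Rellich's theorem, and then controlling the boundary term in the energy identity for the modified-Helmholtz factor so that it indeed tends to zero.
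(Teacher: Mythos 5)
Your proposal is correct, and for existence and the quantitative bound it follows exactly the route the paper intends: the corollary is stated as an immediate consequence of Theorem~\ref{free_estimate}, i.e.\ one sets $u=R_0(\kappa)f$ using the analytic (pole-free) continuation of the resolvent to $\mathbb{C}\setminus\{0\}$ and reads off the $H^4$ bound from \eqref{free} with $j=4$ and $(\Im\kappa)_-=(\Re\kappa)_-=0$. Where you genuinely add something is uniqueness: the paper gives no argument at all (it defers the well-posedness under the radiation conditions \eqref{src} to \cite{ts_ipi}), whereas you supply a complete proof via the factorization $\Delta^2-\kappa^4=(\Delta-\kappa^2)(\Delta+\kappa^2)$, showing that $v=(\Delta-\kappa^2)w$ is an entire radiating Helmholtz solution (hence zero by Rellich plus unique continuation) and that the remaining modified-Helmholtz factor is killed by the energy identity, where taking the imaginary part of $\int_{\partial B_\rho}\bar w\,\partial_r w$ first forces $\|w\|_{L^2(\partial B_\rho)}\to 0$ and then the real part forces the monotone volume integral to vanish. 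This is the standard mechanism behind the radiation condition \eqref{src} for the biharmonic operator and is consistent with the structure of the kernel \eqref{kernel}; your observation that \eqref{spectral1} degenerates on the positive real axis and therefore cannot by itself yield uniqueness is also accurate. Two cosmetic points: the estimate \eqref{free} is stated for cutoffs supported \emph{inside} $B_R$, so to bound $\|u\|_{H^4(B_R)}$ you should either enlarge $R$ slightly (the constant then depends on the larger ball) or note that $f$ is supported in $B_R$ so only the outer cutoff needs adjusting; and in the Rellich step you should say explicitly that the radiation condition together with Green's identity (no sources for $v$) gives $\int_{\partial B_\rho}|v|^2\to 0$ before invoking Rellich's lemma.
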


\subsection{Inverse problem}

In this section, we discuss the uniqueness and stability of the inverse problem without zeroth order perturbation, i.e., $V(x)\equiv 0$. 

First, we study the spectrum of the operator $H_0$ with the Navier boundary condition. Let $\{\lambda_j, \varphi_j\}_{j=1}^\infty$ be the positive increasing eigenvalues and eigenfunctions of $H_0$ in $B_R$, where $\varphi_j$ and $\lambda_j$ satisfy
\[
\begin{cases}
\Delta^2 \varphi_j(x)=\lambda_j\varphi_j(x)&\quad\text{in }B_R,\\
\Delta \varphi_j(x) = \varphi_j(x)=0&\quad\text{on }\partial B_R.
\end{cases}
\]
In fact, we can take $\{\lambda_j^{1/2}, \varphi_j\}_{j=1}^\infty$ to be the spectrum of the Laplacian operator such that
\[
\begin{cases}
-\Delta \varphi_j(x)=\lambda_j^{1/2}\varphi_j(x)&\quad\text{in }B_R,\\
\varphi_j(x)=0&\quad\text{on }\partial B_R,
\end{cases}
\]
where the eigenfunctions $\{\varphi_j\}_{j=1}^\infty$ form a complete basis in $L^2(B_R)$. Assume that $\varphi_j$ is normalized such that
\[
\int_{B_R}|\varphi_j(x)|^2{\rm d}x=1.
\] 
Consequently, we obtain the spectral decomposition of $f$:
\[
f(x)=\sum_{j=1}^\infty f_j\varphi_j(x),
\]
where
\[
f_j=\int_{B_R}f(x)\bar{\varphi}_j(x){\rm
d}x.
\]
It is clear that 
\begin{align}\label{energy}
 \|f\|^2_{L^2(B_R)} = \sum_{j} |f_j|^2.
\end{align}

The following lemma gives a link between the values of an analytical function for small and large arguments. The proof can be found in \cite[Lemma 3.2]{CIL}. 

\begin{lemma}\label{AC1}
  Denote $S=\{z=x+{\rm i}y\in\mathbb{C}: -\frac{\pi}{4}<{\rm arg}
z<\frac{\pi}{4}\}$. Let $J(z)$ be analytic in $S$ and continuous in $\bar{S}$
satisfying
\[
 \begin{cases}
  |J(z)|\leq\epsilon, & z\in (0, ~ K],\\
  |J(z)|\leq M, & z\in S,\\
  |J(0)|=0.
 \end{cases}
\]
Then there exits a function $\beta(z)$ satisfying
\[
 \begin{cases}
  \beta(z)\geq\frac{1}{2},  & z\in(K, ~ 2^{\frac{1}{4}}K),\\
  \beta(z)\geq \frac{1}{\pi}((\frac{z}{K})^4-1)^{-\frac{1}{2}}, & z\in
(2^{\frac{1}{4}}K, ~ \infty)
 \end{cases}
\]
such that
\[
|J(z)|\leq M\epsilon^{\beta(z)}\quad\forall\, z\in (K, ~ \infty).
\]
\end{lemma}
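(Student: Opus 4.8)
The plan is to read the desired inequality as a two-constants (harmonic measure) estimate for the subharmonic function $\log|J|$, and then to make the implicit constant explicit by an elementary chain of conformal maps. The one genuine subtlety is that the good data lies in the \emph{interior} of $S$: the control set $(0,K]$ sits on the axis of symmetry of the sector, not on $\partial S$, so the two-constants theorem cannot be applied on $S$ directly. I would get around this by working on the slit domain
\[
\Omega:=S\setminus[0,K],
\]
whose boundary consists of the two rays $\arg z=\pm\frac{\pi}{4}$ (where only $|J|\le M$ is available) together with the two sides of the slit $[0,K]$ (where $|J|\le\epsilon$, using continuity up to $\bar S$). Let $\omega(z)$ be the harmonic measure in $\Omega$ of the slit, i.e.\ the bounded harmonic function equal to $1$ on $[0,K]$ and $0$ on the rays. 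Since $\log|J|$ is subharmonic and bounded above by $\log M$, comparing it with the harmonic function $\omega\log\epsilon+(1-\omega)\log M$ and invoking the maximum principle yields, for $z\in\Omega$,
\[
|J(z)|\le \epsilon^{\omega(z)}M^{1-\omega(z)}\le M\,\epsilon^{\omega(z)},
\]
where the last step uses $M\ge1$ (enlarge $M$ if necessary). Here the hypothesis $|J(0)|=0$, i.e.\ $\log|J(0)|=-\infty$, disposes of the corner at the vertex, and a Phragm\'en--Lindel\"of argument handles the unbounded end. Thus it suffices to take $\beta=\omega$ and to bound $\omega$ from below on the real ray $(K,\infty)$.

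To compute $\omega$ on $z>K$ I would flatten the sector and open the slit. The map $z\mapsto z^2$ sends $S$ onto the right half-plane and $[0,K]$ onto $[0,K^2]$; rescaling by $K^{-2}$ normalizes the slit to $[0,1]$. A further squaring $s\mapsto s^2$ carries the slit right half-plane onto the ray complement $\C\setminus(-\infty,1]$, turning the imaginary axis (value $0$) into $(-\infty,0]$ while preserving the slit $[0,1]$ (value $1$); finally $t\mapsto\sqrt{1-t}$ maps this onto the upper half-plane, sending the two boundary pieces to $[-1,1]$ (value $1$) and its complement (value $0$), and carrying the evaluation point $z>K$ to the purely imaginary point $\mathrm{i}\sqrt{(z/K)^4-1}$. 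The harmonic measure of $[-1,1]$ at $\mathrm{i}h$ in the upper half-plane is the subtended-angle quantity $\tfrac1\pi(\pi-2\arctan h)$, which gives the closed form
\[
\omega(z)=1-\frac{2}{\pi}\arctan\sqrt{(z/K)^4-1},\quad z>K.
\]

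From this closed form the two regimes follow by routine estimates. The function $\omega$ is decreasing in $z$ and equals $\tfrac12$ at $z=2^{1/4}K$ (where $(z/K)^4=2$), so $\omega\ge\tfrac12$ on $(K,2^{1/4}K)$. For $z>2^{1/4}K$, writing $\omega(z)=\tfrac{2}{\pi}\operatorname{arccot}\sqrt{(z/K)^4-1}$ and using $\arctan x\ge\tfrac{\pi}{4}x$ for $0\le x\le1$ with $x=((z/K)^4-1)^{-1/2}\le1$, one obtains $\omega(z)\ge\tfrac1\pi((z/K)^4-1)^{-1/2}$. Setting $\beta=\omega$ then matches the claimed lower bounds and finishes the proof. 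I expect the only delicate step to be the reduction in the first paragraph: correctly posing the problem as harmonic measure of the slit (rather than of a boundary arc) and justifying the maximum-principle comparison on the unbounded slit domain; once that is in place, the conformal-mapping computation and the final elementary inequalities are mechanical.
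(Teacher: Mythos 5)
Your argument is correct: the two-constants theorem applied to $\log|J|$ on the slit domain $S\setminus[0,K]$, the conformal reduction $z\mapsto (z/K)^4$ followed by $t\mapsto\sqrt{1-t}$ onto the upper half-plane, the resulting closed form $\omega(z)=\tfrac{2}{\pi}\arctan\bigl(((z/K)^4-1)^{-1/2}\bigr)$ at the image point $\mathrm{i}\sqrt{(z/K)^4-1}$, and the elementary estimates ($\omega=\tfrac12$ at $z=2^{1/4}K$ with $\omega$ decreasing, and $\arctan x\ge\tfrac{\pi}{4}x$ on $[0,1]$) all check out and yield the stated lower bounds with $\beta=\omega$. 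The paper itself gives no proof but defers to \cite[Lemma 3.2]{CIL}, whose argument is essentially this same harmonic-measure computation; the only point worth noting is the harmless normalization $M\ge 1$ needed to pass from $\epsilon^{\omega}M^{1-\omega}$ to $M\epsilon^{\omega}$, which you already flag.
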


The following result concerns an estimate for the normal derivatives of the eigenfunctions on $\partial B_R$. The proof can be found in \cite[Lemma A.2]{LZZ}.

\begin{lemma}\label{eigenfunction_est2}
The following estimate holds:
\begin{align*}
\|\partial_\nu \varphi_j\|_{L^2(\partial B_R)}\leq C\kappa_j,
\end{align*}
where the positive constant $C$ is independent of $j$. Moreover, the following Weyl-type inequality holds for the Dirichlet eigenvalues $\{\lambda_n\}_{n=1}^\infty$:
\begin{align*}
E_1 n^{4/3}\leq \lambda_n\leq E_2 n^{4/3},
\end{align*}
where $E_1$ and $E_2$ are two positive constants independent of $n$.
\end{lemma}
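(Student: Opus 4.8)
The plan is to establish the two assertions separately, since they are of different nature. The bound on the boundary normal derivative is a direct consequence of a Rellich--Pohozaev identity for the Dirichlet Laplacian, while the Weyl-type two-sided bound on the eigenvalues follows from the classical Weyl asymptotic law in dimension three together with the relation $\lambda_j=\mu_j^2$, where $\mu_j:=\lambda_j^{1/2}$ is the $j$-th Dirichlet eigenvalue of $-\Delta$ on $B_R$ and $\kappa_j:=\lambda_j^{1/4}$, so that $\kappa_j^2=\mu_j$.

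For the first estimate, I would start from the eigenvalue equation $-\Delta\varphi_j=\mu_j\varphi_j$ in $B_R$ with $\varphi_j=0$ on $\partial B_R$. Multiplying by the radial field $x\cdot\nabla\varphi_j$ and integrating over $B_R$, one integrates by parts on both sides. On the right-hand side $\mu_j\int_{B_R}\varphi_j\,(x\cdot\nabla\varphi_j)\,{\rm d}x=\tfrac{\mu_j}{2}\int_{B_R}x\cdot\nabla(\varphi_j^2)\,{\rm d}x$ the boundary term vanishes because $\varphi_j=0$ on $\partial B_R$, leaving $-\tfrac{3}{2}\mu_j\|\varphi_j\|_{L^2(B_R)}^2$. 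On the left-hand side, after two integrations by parts and using that on $\partial B_R$ the tangential derivatives of $\varphi_j$ vanish, so that $\nabla\varphi_j=(\partial_\nu\varphi_j)\nu$ and $x\cdot\nabla\varphi_j=(x\cdot\nu)\partial_\nu\varphi_j$ there, the interior gradient terms combine via $\int_{B_R}|\nabla\varphi_j|^2=\mu_j\|\varphi_j\|_{L^2}^2$ and cancel, leaving a single boundary integral. Because $B_R$ is centred at the origin, $x\cdot\nu=R$ on $\partial B_R$, and the identity collapses to
\[
R\int_{\partial B_R}|\partial_\nu\varphi_j|^2\,{\rm d}S=2\mu_j\|\varphi_j\|_{L^2(B_R)}^2 .
\]
With the normalization $\|\varphi_j\|_{L^2(B_R)}=1$ this yields $\|\partial_\nu\varphi_j\|_{L^2(\partial B_R)}^2=2\mu_j/R=2\kappa_j^2/R$, that is, $\|\partial_\nu\varphi_j\|_{L^2(\partial B_R)}\le C\kappa_j$ with $C=\sqrt{2/R}$ independent of $j$.

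For the Weyl-type inequality, I would invoke Weyl's law for the Dirichlet Laplacian on the bounded domain $B_R\subset\mathbb R^3$: the counting function $N(\mu):=\#\{j:\mu_j\le\mu\}$ satisfies $N(\mu)=c_3|B_R|\,\mu^{3/2}+o(\mu^{3/2})$ as $\mu\to\infty$, whence $\mu_n\asymp n^{2/3}$. Squaring the relation $\lambda_n=\mu_n^2$ then gives $\lambda_n\asymp n^{4/3}$, and adjusting the constants to absorb the finitely many low-lying eigenvalues produces the two-sided bound $E_1 n^{4/3}\le\lambda_n\le E_2 n^{4/3}$. To make this fully explicit rather than merely asymptotic, one may replace the soft Weyl asymptotics by the Li--Yau lower bound on $\sum_{j\le n}\mu_j$, which forces $\mu_n\gtrsim n^{2/3}$, together with a matching upper bound obtained from Dirichlet--Neumann bracketing or a test-function argument.

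The computations are routine; the only point demanding care is the bookkeeping of the boundary terms in the Rellich identity, specifically the repeated use of the fact that $\varphi_j$, and hence its tangential gradient, vanishes on $\partial B_R$, which is what makes the interior terms cancel and leaves only the clean boundary expression. I would therefore carry out that integration by parts first and most carefully, and treat the spectral asymptotics as standard.
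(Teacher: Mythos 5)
Your proposal is correct and follows essentially the same route as the source the paper cites for this lemma and as the paper's own proof of the perturbed analogue (Lemma \ref{eigenfunction_est1}): the boundary bound is the Rellich--Pohozaev identity $R\int_{\partial B_R}|\partial_\nu\varphi_j|^2\,{\rm d}s=2\mu_j\|\varphi_j\|_{L^2(B_R)}^2$, which the paper phrases as a commutator computation with $A=x\cdot\nabla+\tfrac32$, and the eigenvalue bound is Weyl's law for $-\Delta$ in $\mathbb R^3$ combined with $\lambda_n=\mu_n^2$. The identity you derive even gives the sharp constant $C=\sqrt{2/R}$, so nothing further is needed.
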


Denote $\kappa_j^4 = \lambda_j$. Let
$u(x, \kappa_j)$ be the solution to
\eqref{main_eq}--\eqref{src} with $\kappa = \kappa_j$. 

\begin{lemma}\label{fj}
The following estimate holds:
\[
|f_j|^2 \lesssim \kappa_j^4 \|u(x,\kappa_j)\|^2_{L^2(\partial B_R)}  + \kappa_j^2 \|\Delta u(x,\kappa_j)\|^2_{L^2(\partial B_R)}
\]
for $j=1, 2, 3, \cdot\cdot\cdot$.
\end{lemma}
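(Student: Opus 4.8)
The plan is to represent the Fourier coefficient $f_j$ entirely in terms of the boundary data by testing the governing equation against the eigenfunction $\varphi_j$ and integrating by parts over $B_R$. Since $V\equiv 0$ here, $u$ solves $\Delta^2 u-\kappa_j^4u=f$, so that $f_j=\int_{B_R}f\varphi_j\,{\rm d}x=\int_{B_R}(\Delta^2u-\kappa_j^4u)\varphi_j\,{\rm d}x$. I would apply the second Green identity for the biharmonic operator,
\[
\int_{B_R}\big(\varphi_j\,\Delta^2u-u\,\Delta^2\varphi_j\big)\,{\rm d}x=\int_{\partial B_R}\big(\varphi_j\,\partial_\nu\Delta u-\Delta u\,\partial_\nu\varphi_j-u\,\partial_\nu\Delta\varphi_j+\Delta\varphi_j\,\partial_\nu u\big)\,{\rm d}S,
\]
and then use the eigenrelation $\Delta^2\varphi_j=\lambda_j\varphi_j=\kappa_j^4\varphi_j$ in the volume integral. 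This is the crucial point: the term $\kappa_j^4\int_{B_R}u\varphi_j\,{\rm d}x$ is cancelled exactly, so the left-hand side collapses to $\int_{B_R}\varphi_j(\Delta^2u-\kappa_j^4u)\,{\rm d}x=f_j$, and the entire interior contribution of the unknown $u$ disappears.

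Next I would invoke the Navier boundary conditions $\varphi_j=\Delta\varphi_j=0$ on $\partial B_R$. These annihilate precisely the two boundary terms $\varphi_j\,\partial_\nu\Delta u$ and $\Delta\varphi_j\,\partial_\nu u$, which are the ones carrying the unmeasured Neumann-type quantities $\partial_\nu\Delta u$ and $\partial_\nu u$. What survives is
\[
f_j=-\int_{\partial B_R}\big(\Delta u\,\partial_\nu\varphi_j+u\,\partial_\nu\Delta\varphi_j\big)\,{\rm d}S,
\]
a representation depending only on the Dirichlet-type data $u$ and $\Delta u$ on $\partial B_R$, consistent with the earlier remark that only Dirichlet data is needed. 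Applying the Cauchy--Schwarz inequality on $\partial B_R$ then yields
\[
|f_j|\le\|\Delta u\|_{L^2(\partial B_R)}\|\partial_\nu\varphi_j\|_{L^2(\partial B_R)}+\|u\|_{L^2(\partial B_R)}\|\partial_\nu\Delta\varphi_j\|_{L^2(\partial B_R)}.
\]

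To finish I would control the two eigenfunction normal-derivative norms. Lemma \ref{eigenfunction_est2} supplies $\|\partial_\nu\varphi_j\|_{L^2(\partial B_R)}\lesssim\kappa_j$. For the second factor I would use $-\Delta\varphi_j=\lambda_j^{1/2}\varphi_j=\kappa_j^2\varphi_j$ throughout $B_R$, whence $\partial_\nu\Delta\varphi_j=-\kappa_j^2\partial_\nu\varphi_j$ on $\partial B_R$ and thus $\|\partial_\nu\Delta\varphi_j\|_{L^2(\partial B_R)}=\kappa_j^2\|\partial_\nu\varphi_j\|_{L^2(\partial B_R)}\lesssim\kappa_j^3$. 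Substituting these bounds and squaring via $(a+b)^2\le 2a^2+2b^2$ gives the estimate. The main obstacle is structural rather than analytic: one must arrange the biharmonic Green identity so that the eigenvalue relation kills the interior $u$-term and the Navier conditions eliminate exactly the two boundary integrals involving normal derivatives of the data; after that it is only Cauchy--Schwarz together with the imported spectral bounds, and the traces are legitimate because $u\in H^4(B_R)$ by the preceding corollary. One point to double-check is the exponent of $\kappa_j$ on the $\|u\|^2$ term: the argument above produces $\kappa_j^6$ (from $\|\partial_\nu\Delta\varphi_j\|\lesssim\kappa_j^3$), so reaching the stated $\kappa_j^4$ would require a sharper bound of the form $\|\partial_\nu\Delta\varphi_j\|_{L^2(\partial B_R)}\lesssim\kappa_j^2$, which does not seem to follow from Lemma \ref{eigenfunction_est2}.
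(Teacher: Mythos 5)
Your proof is exactly the paper's: multiply the equation by $\bar{\varphi}_j$, apply the biharmonic Green identity, use the Navier conditions $\varphi_j=\Delta\varphi_j=0$ together with $\partial_\nu(\Delta\varphi_j)=-\kappa_j^2\partial_\nu\varphi_j$ to reduce $f_j$ to the two Dirichlet-data boundary integrals, and finish with Cauchy--Schwarz and Lemma \ref{eigenfunction_est2}. Your closing concern is also well founded: carrying out this argument (which is what the paper does) yields $|f_j|^2\lesssim \kappa_j^2\|\Delta u\|^2_{L^2(\partial B_R)}+\kappa_j^6\|u\|^2_{L^2(\partial B_R)}$, so the exponent $\kappa_j^4$ on the $\|u\|^2$ term in the stated lemma appears to be an error in the paper rather than a gap in your reasoning; the discrepancy only changes the definition of the data discrepancy $\epsilon$ downstream, not the structure of the stability proof.
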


\begin{proof}

Multiplying both sides of \eqref{main_eq} by $\bar{\varphi}_j$ and using
the integration by parts yield
\begin{align*}
\int_{B_R} f(x)\bar{\varphi}_j(x) {\rm d}x &= \int_{\partial B_R}\left(\partial_\nu (\Delta u(x, \kappa_j)) \bar{\varphi}_j - \Delta u(x, \kappa_j)\partial_\nu \bar{\varphi}_j\right){\rm d}s\\
 &\quad + \int_{\partial B_R} \left(\partial_\nu u(x, \kappa_j) \Delta \bar{\varphi}_j - u(x, \kappa_j)\partial_\nu (\Delta \bar{\varphi}_j)\right){\rm d}s.
\end{align*}
Noting $\Delta \bar{\varphi}_j =  \bar{\varphi}_j = 0$ and $\partial_\nu (\Delta \bar{\varphi}_j) = -\kappa_j^2\partial_\nu\bar{\varphi}_j$ on $\partial B_R$, we obtain 
\begin{align*}
\int_{B_R} f(x)\bar{\varphi}_j(x) {\rm d}x = -\int_{\partial B_R} \Delta u(x, \kappa_j)\partial_\nu \bar{\varphi}_j{\rm d}s + \kappa_j^2\int_{\partial B_R}u(x, \kappa_j)\partial_\nu \bar{\varphi}_j{\rm d}s.
\end{align*}
The proof is completed by using Lemma \ref{eigenfunction_est2} and the Schwartz
inequality. 
\end{proof}

Let $\varepsilon$ be a small positive constant such that $\varepsilon<\kappa_1$ and denote $\Omega_{\varepsilon}:= \mathbb R^3\backslash \overline{B_{\varepsilon}}$. The following lemma gives the analytic continuation of the data from small wavenumber to large wavenumber. The proof is based on the crucial result in Theorem \ref{free_estimate}. 

\begin{lemma}\label{ac_1}
Let $f$ be a real-valued function and $\|f\|_{L^2(B_R)}\leq Q$.
Then for any two positive constants $A$ and $A_1$ satisfying
$A_1 = A + \varepsilon<\kappa_1$, there exists a function $\beta(\kappa)$ such that 
\[
\kappa^4 \|u(x,\kappa)\|^2_{L^2(\partial B_R)}  + \kappa^2 \|\Delta u(x,\kappa)\|^2_{L^2(\partial B_R)} \lesssim
Q^2e^{6R\kappa}\epsilon_1^{2\beta(\kappa)}\quad \forall\, \kappa\in (A_1, +\infty),
\]
where 
\begin{align*}
\epsilon^2_1 :&={\rm sup}_{\kappa \in (\varepsilon, A_1)} \Big(\kappa^4 \|u(x,\kappa)\|^2_{L^2(\partial B_R)}  + \kappa^2 \|\Delta u(x,\kappa)\|^2_{L^2(\partial B_R)}\Big)
\end{align*}
and $\beta(\kappa)$ satisfies
\begin{align}\label{mu1}
 \begin{cases}
  \beta(\kappa)\geq\frac{1}{2},  & \kappa\in(A_1, ~ \varepsilon + 2^{\frac{1}{4}}A),\\
  \beta(\kappa)\geq \frac{1}{\pi}((\frac{\kappa - \varepsilon}{A})^4-1)^{-\frac{1}{2}}, & \kappa\in
(\varepsilon + 2^{\frac{1}{4}}A, ~ \infty). 
 \end{cases}
\end{align}

\end{lemma}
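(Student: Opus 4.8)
The plan is to realize the data functional
$\kappa\mapsto \kappa^4\|u(\cdot,\kappa)\|^2_{L^2(\partial B_R)}+\kappa^2\|\Delta u(\cdot,\kappa)\|^2_{L^2(\partial B_R)}$
as the restriction to the positive real axis of a scalar function that is holomorphic in a complex sector, and then to feed that function into the Hadamard three-lines estimate of Lemma \ref{AC1}. First I would complexify the solution: since $V\equiv 0$, the outgoing solution of \eqref{main_eq}--\eqref{src} is $u(\cdot,\kappa)=R_0(\kappa)f$, and by Theorem \ref{free_estimate} the operator $\rho R_0(\kappa)\rho$ extends holomorphically to $\kappa\in\mathbb{C}\setminus\{0\}$ as a map into $H^4(B_R)$. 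Choosing $\rho\in C_0^\infty$ with $\rho\equiv 1$ on a neighborhood of $B_R\supset\mathrm{supp}\,f$ shows that $\kappa\mapsto u(\cdot,\kappa)$ is an $H^4(B_R)$-valued holomorphic function on $\mathbb{C}\setminus\{0\}$; composing with the bounded trace operators, $u|_{\partial B_R}$ and $\Delta u|_{\partial B_R}$ are then holomorphic $L^2(\partial B_R)$-valued functions of $\kappa$ there.

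Next I would define
\[
J(\kappa):=\kappa^4\int_{\partial B_R} u(x,\kappa)\,\overline{u(x,\bar\kappa)}\,\mathrm{d}s(x)+\kappa^2\int_{\partial B_R}\Delta u(x,\kappa)\,\overline{\Delta u(x,\bar\kappa)}\,\mathrm{d}s(x).
\]
Since $u(x,\cdot)$ is holomorphic, the map $\kappa\mapsto\overline{u(x,\bar\kappa)}$ is holomorphic as well, being the complex conjugate of an anti-holomorphic function; hence $J$ is a product of holomorphic factors integrated over a fixed compact surface, and is holomorphic on $\mathbb{C}\setminus\{0\}$. For real $\kappa$ one has $\overline{u(x,\bar\kappa)}=\overline{u(x,\kappa)}$, so $J(\kappa)=\kappa^4\|u(\cdot,\kappa)\|^2_{L^2(\partial B_R)}+\kappa^2\|\Delta u(\cdot,\kappa)\|^2_{L^2(\partial B_R)}$ is exactly the data, real and nonnegative. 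This is the device that converts the non-analytic squared norms into a genuinely analytic scalar.

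I would then verify the two quantitative inputs of Lemma \ref{AC1} after the shift $z=\kappa-\varepsilon$, so that the sector $S$ has vertex at $\kappa=\varepsilon$; this keeps us away from the only pole at $\kappa=0$ (so $J$ is holomorphic in a neighborhood of $\bar S$, in particular continuous on $\bar S$) and makes the two regimes in \eqref{mu1} match those of the lemma with $K=A$. On the measurement band the definition of $\epsilon_1$ gives $|J(\kappa)|\le\epsilon_1^2$ for $\kappa\in(\varepsilon,A_1]$, i.e. the hypothesis $|J|\le\epsilon$ on $(0,K]$ with $\epsilon=\epsilon_1^2$. For the global bound on the sector I would insert the resolvent estimate \eqref{free} (with $j=1$ for $u$ and $j=3$ for $\Delta u$, together with the trace inequality) into the two integrals; since in $\bar S$ one has $\Re\kappa\ge\varepsilon>0$ and hence $(\Re\kappa)_-=0$, while the conjugate factor carries $(\Im\bar\kappa)_-=(\Im\kappa)_+$, the exponential weights in \eqref{free} combine and, after the continuation, produce a bound of the form $|J(\kappa)|\lesssim Q^2 e^{6R\kappa}$ with harmless polynomial factors in $|\kappa|$. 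Applying Lemma \ref{AC1} with $M\sim Q^2 e^{6R\kappa}$ and $\epsilon=\epsilon_1^2$ yields $|J(\kappa)|\lesssim Q^2 e^{6R\kappa}\epsilon_1^{2\beta(\kappa)}$ on $(A_1,\infty)$, and since $J$ equals the data there, the claim follows.

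The main obstacle is that, unlike the clean hypothesis of Lemma \ref{AC1}, the function $J$ is \emph{not} uniformly bounded on the unbounded sector: the weights in \eqref{free} force exponential growth as $|\Im\kappa|\to\infty$. Reconciling this with the bounded-$M$ form of the lemma is precisely where the factor $e^{6R\kappa}$ is generated, and one must run the argument for a function of finite exponential type — a Phragm\'en--Lindel\"of/harmonic-measure version of the three-lines estimate — so that the global exponential weight reappears as a prefactor at the real evaluation point instead of destroying the bound. A secondary technical point is the vertex normalization $|J(0)|=0$ demanded by Lemma \ref{AC1}: since $J(\varepsilon)$ need not vanish, I would apply the lemma to $J(\cdot+\varepsilon)-J(\varepsilon)$, the subtraction costing only an additional $\epsilon_1^2\le\epsilon_1^{2\beta}$ that is absorbed into the final estimate.
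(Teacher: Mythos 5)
Your proposal is correct and follows essentially the same route as the paper: the paper defines $I(\kappa)=\int_{\partial B_R}\bigl(\kappa^2 u(x,\kappa)u(x,{\rm i}\kappa)+\kappa^4\Delta u(x,\kappa)\Delta u(x,{\rm i}\kappa)\bigr)\,{\rm d}s$, using $u(x,{\rm i}\kappa)$ as the holomorphic surrogate for $\overline{u(x,\kappa)}$ (which coincides with your $\overline{u(x,\bar\kappa)}$ by uniqueness of analytic continuation), bounds it by $Q^2e^{6R|\kappa|}$ on the shifted sector $S_\varepsilon$ via Theorem \ref{free_estimate}, and then applies Lemma \ref{AC1}. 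Your concluding remarks about taming the exponential growth by weighting with $e^{-6R\kappa}$ before invoking the lemma correspond exactly to the step where the paper passes to $|e^{-6R|\kappa|}I(\kappa)|\lesssim Q^2$, so no genuinely different machinery is involved.
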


\begin{proof}

Let 
\[
I(\kappa):=\int_{\partial
B_R}\left(\kappa^2 u(x,\kappa)u(x,{\rm i}\kappa) + \kappa^4 \Delta u(x,\kappa)\Delta u(x,{\rm i}\kappa)\right)
{\rm d}s,\quad \kappa\in \mathbb C.
\] 
Since $f(x)$ is a real-valued function,  it holds that $\overline{u(x,\kappa)}=u(x,{\rm i}\kappa)$ and  $\overline{\Delta u(x,\kappa)}=\Delta u(x,{\rm i}\kappa)$ for $\kappa\in\mathbb{R}^+$. Thus, we have 
\[
I(\kappa)=\kappa^2\|u(x,\kappa)\|^2_{L^2(\partial B_R)} + \kappa^4\|\Delta u(x,\kappa)\|^2_{L^2(\partial B_R)}, \quad \kappa\in\mathbb R^+.
\]
It follows from Theorem \ref{free_estimate} that $I(\kappa)$ is analytic in the sector domain $S_\varepsilon: =\{z: -\frac{\pi}{4}<{\rm arg}(z - \varepsilon)<\frac{\pi}{4}\}$.
By the estimate \eqref{free} in Theorem \ref{free_estimate}, we have for $\kappa\in S_\varepsilon$ that 
\begin{align*}
&|\kappa|\|u(x,\kappa)\|_{L^2(\partial B_R)} + |\kappa|^2 \|\Delta u(x,\kappa)\|_{L^2(\partial B_R)}\\
&\leq  |\kappa|\|u(x,\kappa)\|_{H^{1/2}(\partial B_R)} + |\kappa|^2 \|\Delta u(x,\kappa)\|_{H^{3/2}(\partial B_R)}\\
&\leq (|\kappa| + |\kappa|^2) \|u\|_{H^4(\mathbb R^3)}\leq  e^{3R|\kappa|}\|f\|_{L^2(B_R)}.
\end{align*}
Since
\begin{align*}
|I(\kappa)|&\leq |\kappa| \|u(x,\kappa)\|_{L^2(\partial B_R)} |\kappa|
\|u(x,-\kappa)\|_{L^2(\partial B_R)}  \\
&\quad + |\kappa|^2 \|\Delta u(x,\kappa)\|_{L^2(\partial B_R)} |\kappa|^2
\|\Delta u(x,-\kappa)\|_{L^2(\partial B_R)}\\
&\lesssim e^{6R|\kappa|}\|f\|^2_{L^2(B_R)},\quad\kappa\in S_\varepsilon,
\end{align*}
we get
\[
|e^{-6R|\kappa|}I(\kappa)|\lesssim Q^2,\quad\kappa\in S_\varepsilon.
\]
An application of Lemma \ref{AC1} shows that there exists a function $\beta(\kappa)$ satisfying \eqref{mu1} such that
\[
\big| e^{-6R\kappa} I(\kappa)\big| \lesssim
Q^2\epsilon_1^{2\beta(\kappa)}\quad \forall\, \kappa\in (A_1, +\infty),
\]
which completes the proof.
\end{proof}

We state a simple uniqueness result for the inverse problem. 

\begin{theorem}
Let $f\in L^2(B_R)$ and $I\subset\mathbb R^+$ be an open interval. Then the source term can be uniquely determined by the multifrequency data $\{u(x, \kappa), \Delta u(x, \kappa): x\in\partial B_R, \kappa\in I\}$.
\end{theorem}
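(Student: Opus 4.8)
The plan is to combine linearity of the forward problem with the analyticity of the boundary data already exploited in Lemma \ref{ac_1}. By the well-posedness in the Corollary the map $f \mapsto u$ is linear, and the difference of two real-valued sources is again a real-valued source; hence it suffices to prove that vanishing data forces a vanishing source, i.e. that $u(x,\kappa) = 0$ and $\Delta u(x,\kappa) = 0$ on $\partial B_R$ for all $\kappa \in I$ implies $f \equiv 0$.

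First I would reuse the auxiliary function from the proof of Lemma \ref{ac_1},
\[
I(\kappa) = \int_{\partial B_R}\big(\kappa^2 u(x,\kappa)u(x,{\rm i}\kappa) + \kappa^4 \Delta u(x,\kappa)\Delta u(x,{\rm i}\kappa)\big)\,{\rm d}s,
\]
which is analytic in the sector $S_\varepsilon$ by Theorem \ref{free_estimate}, and which---because $f$ is real-valued, so that $\overline{u(x,\kappa)}=u(x,{\rm i}\kappa)$---coincides with $\kappa^2\|u(x,\kappa)\|^2_{L^2(\partial B_R)} + \kappa^4\|\Delta u(x,\kappa)\|^2_{L^2(\partial B_R)}$ for real $\kappa$. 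Choosing $\varepsilon$ smaller than both $\kappa_1$ and the left endpoint of $I$, the interval $I$ lies on the real axis inside the connected open set $S_\varepsilon$. The assumed vanishing of the data on $I$ gives $I(\kappa) = 0$ there, and since $I$ has a limit point in $S_\varepsilon$, the identity theorem forces $I(\kappa) \equiv 0$ throughout $S_\varepsilon$.

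Restricting to the positive real axis and using that the two summands are nonnegative, I then obtain $u(x,\kappa) = 0$ and $\Delta u(x,\kappa) = 0$ on $\partial B_R$ for every $\kappa > 0$, in particular at each eigenvalue $\kappa_j$ with $\kappa_j^4 = \lambda_j$. Lemma \ref{fj} consequently yields $|f_j|^2 \lesssim \kappa_j^4\|u(x,\kappa_j)\|^2_{L^2(\partial B_R)} + \kappa_j^2\|\Delta u(x,\kappa_j)\|^2_{L^2(\partial B_R)} = 0$ for all $j$, and the Parseval identity \eqref{energy} gives $\|f\|^2_{L^2(B_R)} = \sum_j |f_j|^2 = 0$. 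Hence $f = 0$, establishing uniqueness.

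The essential step---and the only place where anything beyond bookkeeping is needed---is the analytic continuation carrying the vanishing of the data from the finite interval $I$ to all of $\mathbb R^+$; this rests squarely on the analyticity of $I(\kappa)$ in $S_\varepsilon$, itself a consequence of the resolvent estimate of Theorem \ref{free_estimate}. The one technical point to check is that $\varepsilon$ may indeed be taken small enough that $I$ (or a subinterval of it) lies in $(\varepsilon,\infty) \subset S_\varepsilon$, so that the identity theorem applies to a zero set with a limit point in the domain of analyticity.
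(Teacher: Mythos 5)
Your proof is correct and follows essentially the same route as the paper: analytic continuation in the wavenumber $\kappa$ (via Theorem \ref{free_estimate} and the identity theorem) propagates the vanishing of the data from $I$ to the eigenvalue wavenumbers $\kappa_j$, after which Lemma \ref{fj} and the Parseval identity \eqref{energy} give $f=0$. The only cosmetic difference is that you continue the scalar functional $I(\kappa)$ and then use nonnegativity of its two summands on the real axis, whereas the paper applies the identity theorem to $u(x,\kappa)$ and $\Delta u(x,\kappa)$ directly.
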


\begin{proof}
Let $u(x, \kappa) = \Delta u(x, \kappa) = 0$ for all $x\in\partial B_R$ and $\kappa\in I$.  It suffices to prove that $f (x) = 0$. Since by Theorem \ref{free_estimate} it holds that $u(x, \kappa)$ is analytic in the whole complex plane minus the origin, i.e., $\mathbb C\backslash\{0\}$, we obtain that $u(x, \kappa) = \Delta u(x, \kappa) = 0$ for all $\kappa\in\mathbb C\backslash\{0\}$. Hence we have $u(x, \kappa_j) = \Delta u(x, \kappa_j) = 0$ for all $ j = 1, 2, 3\cdots$. Then by \eqref{energy} and Lemma \ref{fj} we have $f = 0.$
\end{proof}

The following lemma is important in the stability analysis. The proof can be found in \cite[Lemma 3.4]{LZZ}.

\begin{lemma}\label{tail}
Let $f\in H^{n+1}(B_R)$ and $\|f\|_{H^{n+1}(B_R)}\leq Q$. The following estimate holds:  
\[
\sum_{j \geq s} |f_j|^2 \lesssim \frac{Q^2}{s^{\frac{2}{3}(n+1)}}.
\]
\end{lemma}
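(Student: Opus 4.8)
The plan is to reduce the tail estimate to a single weighted Bessel-type inequality for the Dirichlet eigenfunction coefficients. Writing $\mu_j := \lambda_j^{1/2} = \kappa_j^2$ for the Dirichlet eigenvalues of $-\Delta$ on $B_R$, the Weyl bound of Lemma \ref{eigenfunction_est2} gives $\lambda_j \ge E_1 j^{4/3}$, hence $\mu_j \ge E_1^{1/2} j^{2/3}$; in particular $\mu_j \gtrsim s^{2/3}$ for every $j \ge s$. Granting the inequality
\[
\sum_{j} \mu_j^{n+1}|f_j|^2 \lesssim \|f\|_{H^{n+1}(B_R)}^2, \tag{$\ast$}
\]
the conclusion follows at once by factoring out the smallest weight on the tail:
\[
\sum_{j\ge s}|f_j|^2 \le \mu_s^{-(n+1)}\sum_{j\ge s}\mu_j^{n+1}|f_j|^2 \lesssim s^{-\frac{2}{3}(n+1)}\|f\|_{H^{n+1}(B_R)}^2 \le \frac{Q^2}{s^{\frac{2}{3}(n+1)}}.
\]

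So the whole matter is to establish $(\ast)$. Here I would use crucially that, by the standing assumptions of the paper, $f$ is supported in a compact subset of the open ball $B_R$, so that $f$ and all its derivatives vanish near $\partial B_R$; by density it suffices to argue for $f\in C_c^\infty(B_R)$ and then pass to the limit in the $H^{n+1}$ norm. I would treat the integer exponent $n+1$ in two cases. If $n+1 = 2m$ is even, integrating by parts $m$ times and discarding the (vanishing) boundary contributions gives $\langle (-\Delta)^m f, \varphi_j\rangle = \mu_j^m f_j$, so Parseval \eqref{energy} yields $\sum_j \mu_j^{2m}|f_j|^2 = \|\Delta^m f\|_{L^2(B_R)}^2 \le \|f\|_{H^{2m}(B_R)}^2$. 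If $n+1 = 2m+1$ is odd, I set $g := (-\Delta)^m f$, which again has compact support, so $g\in H^1_0(B_R)$ and $\sum_j \mu_j |g_j|^2 = \langle -\Delta g, g\rangle = \|\nabla g\|_{L^2(B_R)}^2$; since $g_j = \mu_j^m f_j$ this reads $\sum_j \mu_j^{2m+1}|f_j|^2 = \|\nabla \Delta^m f\|_{L^2(B_R)}^2 \le \|f\|_{H^{2m+1}(B_R)}^2$. In both cases $(\ast)$ holds. Alternatively, $(\ast)$ for all real $n+1\ge 0$ follows by complex interpolation between the even-integer identities and Parseval, using that the weighted sequence spaces $\ell^2_{\mu^s}$ form an interpolation scale matching $H^s_0(B_R)$.

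The main obstacle is the control of the boundary terms generated by the repeated integrations by parts. Since the eigenfunctions satisfy only the Dirichlet condition $\varphi_j|_{\partial B_R}=0$ while $\partial_\nu\varphi_j$ does not vanish, for a generic $f\in H^{n+1}(B_R)$ the expansion $\sum_j\mu_j^{n+1}|f_j|^2$ need not be comparable to $\|f\|_{H^{n+1}(B_R)}^2$ and may even diverge. The compact support of the source is precisely what annihilates every boundary term and makes $(\ast)$ hold with the clean Sobolev bound; this is the one place where the interior-support hypothesis is indispensable, and I would invoke it explicitly rather than treating $f$ as an arbitrary element of $H^{n+1}(B_R)$.
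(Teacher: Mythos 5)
Your proof is correct and follows essentially the same route as the paper's (the paper cites \cite[Lemma 3.4]{LZZ} here and proves the analogous Lemma~\ref{tail_2} exactly this way): factor out the smallest weight $\kappa_s^{2(n+1)}\gtrsim s^{\frac{2}{3}(n+1)}$ via the Weyl bound and control $\sum_j \kappa_j^{2(n+1)}|f_j|^2$ by $\|f\|^2_{H^{n+1}(B_R)}$. The only difference is that where the paper simply asserts the norm equivalence $\|f\|^2_{H^s(B_R)}\cong\sum_j(\kappa_j^2+1)^s|f_j|^2$, you prove the one inequality actually needed by integration by parts and correctly note that the interior support of $f$ is what annihilates the boundary terms (without which the asserted two-sided equivalence fails for generic $H^{n+1}$ functions) --- a point the paper leaves implicit.
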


Define a real-valued function space
\[
\mathcal C_Q = \{f \in H^{n+1}(B_R):  \|f\|_{H^{n+1}(B_R)}\leq Q, ~ \text{supp}
f\subset B_R, ~ f: B_R \rightarrow \mathbb R \}.
\]
Now we are in the position to present the stability of the inverse source problem. Let $f\in
\mathcal C_Q$. The inverse source problem is to determine $f$ from the boundary data $u(x,\kappa), \Delta u(x,\kappa)$, $x\in\partial B_R$, $\kappa \in (\varepsilon, A_1) \cup \cup_{j=1}^N \kappa_j$, where $1\leq N \in \mathbb N$, $\varepsilon$ and $A_1$ are the constants given in Lemma \ref{ac_1}.

\begin{theorem}
Let $u(x,\kappa)$ be the solution of the scattering problem \eqref{main_eq}--\eqref{src} corresponding to the source $f\in \mathcal C_Q$. Then for sufficiently small $\epsilon_1$, the following estimate holds:
\begin{align}\label{stability_1}
\|f\|_{L^2( B_R)}^2  \lesssim \epsilon^2+\frac{Q^2}{\left(|\ln\epsilon_1|^{\frac{1}{9}}N^{\frac{5}{8}}
\right)^{\frac{2}{3}(2n+1)}},
\end{align}
where
\begin{align*}
\epsilon^2 &= \sum_{j=1}^N \kappa_j^4 \|u(x,\kappa_j)\|^2_{L^2(\partial B_R)}  + \kappa_j^2 \|\Delta u(x,\kappa_j)\|^2_{L^2(\partial B_R)},\\
\epsilon^2_1 &= {\rm sup}_{\kappa \in (\varepsilon,A_1)}\kappa^4 \|u(x,\kappa)\|^2_{L^2(\partial B_R)}  + \kappa^2 \|\Delta u(x,\kappa)\|^2_{L^2(\partial B_R)}.
\end{align*}
\end{theorem}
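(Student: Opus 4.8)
The plan is to pass to the spectral coefficients $f_j$ and control them in three ranges of the index $j$, combining the three ingredients already assembled: the Lipschitz data discrepancy $\epsilon^2$ at the measured eigen-wavenumbers, the analytic continuation of Lemma \ref{ac_1} for intermediate wavenumbers, and the a priori tail bound of Lemma \ref{tail} for high wavenumbers. By \eqref{energy} we have $\|f\|_{L^2(B_R)}^2=\sum_j|f_j|^2$, and Lemma \ref{fj} reduces each coefficient to boundary data, $|f_j|^2\lesssim \kappa_j^4\|u(x,\kappa_j)\|_{L^2(\partial B_R)}^2+\kappa_j^2\|\Delta u(x,\kappa_j)\|_{L^2(\partial B_R)}^2$. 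I would then fix a truncation index $M\geq N$, to be optimized last, and split $\|f\|_{L^2(B_R)}^2=\sum_{j\leq N}|f_j|^2+\sum_{N<j\leq M}|f_j|^2+\sum_{j>M}|f_j|^2$.

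The two outer blocks are immediate. For $j\leq N$ the wavenumbers $\kappa_j$ are exactly the measured ones, so Lemma \ref{fj} gives $\sum_{j\leq N}|f_j|^2\lesssim\epsilon^2$. For $j>M$ no data is available, so I would invoke $f\in\mathcal C_Q$ and Lemma \ref{tail} to obtain $\sum_{j>M}|f_j|^2\lesssim Q^2 M^{-\frac23(n+1)}$. The heart of the argument is the middle block $N<j\leq M$, where $\kappa_j$ lies beyond the measured interval $(\varepsilon,A_1)$ and must be reached by continuation. Combining Lemma \ref{fj} with Lemma \ref{ac_1} yields, for each such $j$, $|f_j|^2\lesssim Q^2 e^{6R\kappa_j}\epsilon_1^{2\beta(\kappa_j)}$ with $\beta$ as in \eqref{mu1}. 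Since $\beta(\kappa_j)$ decreases while $e^{6R\kappa_j}$ grows, the summands increase in $j$, so the block is dominated by its endpoint $j=M$ times the number of terms; translating indices to wavenumbers by the Weyl law $\kappa_j\sim j^{1/3}$ of Lemma \ref{eigenfunction_est2}, this contribution is $\lesssim Q^2 M\,\exp\!\big(6R\kappa_M-2\beta(\kappa_M)|\ln\epsilon_1|\big)$.

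Collecting the three blocks gives
\[
\|f\|_{L^2(B_R)}^2\lesssim \epsilon^2+Q^2 M\,\exp\!\big(6R\kappa_M-2\beta(\kappa_M)|\ln\epsilon_1|\big)+\frac{Q^2}{M^{\frac{2}{3}(n+1)}},
\]
and choosing $M$ optimally is where I expect the real work to lie. The factor $e^{6R\kappa}$ is inherited from the $L^2_{\rm comp}\to L^2_{\rm loc}$ resolvent bound \eqref{free}, whose exponential growth off the real axis reflects the continuation across the resonance region; it renders the middle term violently unstable unless $\kappa_M$ is held below the threshold where $6R\kappa_M$ is overtaken by $2\beta(\kappa_M)|\ln\epsilon_1|$. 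The delicate point is that one should not merely force the middle term to be $O(1)$, which would recover only a purely logarithmic reach, but balance it against the algebraic tail $M^{-\frac23(n+1)}$ while tracking how the continuation exponent $\beta$ (carrying the dependence on $|\ln\epsilon_1|$) and the starting index $N$ of the block enter the optimal $M$. Carrying out this optimization and then reading the effective resolution back through the Weyl law is precisely what should produce the combined scale $|\ln\epsilon_1|^{1/9}N^{5/8}$ and the exponent $\frac23(2n+1)$ in \eqref{stability_1}; the smallness of $\epsilon_1$ guarantees that the middle term stays subordinate to the other two, so that \eqref{stability_1} follows by substitution.
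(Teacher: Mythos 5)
Your three-block decomposition and the roles you assign to Lemma \ref{fj}, Lemma \ref{ac_1} and Lemma \ref{tail} are exactly the architecture of the paper's proof, and the two outer blocks are handled correctly: $\sum_{j\le N}|f_j|^2\lesssim\epsilon^2$ from the measured data, and $\sum_{j>M}|f_j|^2\lesssim Q^2M^{-\frac{2}{3}(n+1)}$ from the a priori bound. The problem is that you stop precisely where the proof actually lives. You never choose $M$, and the statement that carrying out the optimization ``should produce'' the scale $|\ln\epsilon_1|^{\frac{1}{9}}N^{\frac{5}{8}}$ and the exponent $\tfrac{2}{3}(2n+1)$ is a conjecture, not a derivation of \eqref{stability_1}.

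Two concrete ingredients are missing. First, the continuation bound $|f_j|^2\lesssim Q^2\exp\bigl(6R\kappa_j-2\beta(\kappa_j)|\ln\epsilon_1|\bigr)$ is only useful while the exponent stays negative; since $\beta(\kappa)\sim\kappa^{-2}$ by \eqref{mu1}, this imposes the hard constraint $\kappa_M^{3}\lesssim|\ln\epsilon_1|$, so the reach of the continuation is capped at $\kappa\sim|\ln\epsilon_1|^{1/3}$ no matter how cleverly one balances against the tail. This is why the paper must split into two regimes: when $N^{3/8}\gtrsim|\ln\epsilon_1|^{1/9}$ the admissible middle block is empty (the cutoff is taken to be $N$ itself) and the estimate follows from the data term plus the tail alone, while only when $N^{3/8}\lesssim|\ln\epsilon_1|^{1/9}$ does one take the cutoff $L=[N^{3/4}|\ln\epsilon_1|^{1/9}]$, for which $2\pi L^{3}|\ln\epsilon_1|^{-1}<\tfrac12$ and the exponential genuinely decays. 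Your proposal does not anticipate this case distinction, and your picture of a smooth trade-off between the middle term and the tail obscures it. Second, even after the cutoff is fixed, the middle block is bounded by $Q^2L\exp\bigl(-c\,|\ln\epsilon_1|^{1/9}N^{3/4}\bigr)$, and one must convert this exponential smallness into the same polynomial form as the tail via an elementary inequality of the type $e^{-x}\le C_n\,x^{-3(2n+1)}$; this conversion, together with absorbing the prefactor $L$, is the mechanism that produces the exponent $\tfrac{2}{3}(2n+1)$ (rather than the tail's $\tfrac{2}{3}(n+1)$) and the combined scale appearing in \eqref{stability_1}. Without these steps the claimed rate is not established.
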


\begin{proof}

We can assume that $\epsilon_1<1$, otherwise the estimate is obvious. Let
\[
L=
 \begin{cases}
  [N^{\frac{3}{4}}|\ln\epsilon_1|^{\frac{1}{9}}], &
N^{\frac{3}{8}}<\frac{1}{2^{\frac{5}{6}}
\pi^{\frac{2}{3}}}|\ln\epsilon_1|^{\frac{1}{9}},\\
N,&N^{\frac{3}{8}}\geq \frac{1}{2^{\frac{5}{6}}
\pi^{\frac{2}{3}}}|\ln\epsilon_1|^{\frac{1}{9}}
 \end{cases}.
\]
Using Lemmas \ref{fj} and \ref{ac_1} lead to 
\begin{align*}
|f_j|^2 &\lesssim \kappa_j^4 \|u(x,\kappa_j)\|^2_{L^2(\partial B_R)}  + \kappa_j^2 \|\Delta u(x,\kappa_j)\|^2_{L^2(\partial B_R)}\\
&\lesssim Q^2e^{6R\kappa}\epsilon_1^{2\beta(\kappa)}\lesssim Q^2
e^{4\kappa}e^{-2\beta(\kappa)|\ln\epsilon_1|}\\
&\leq C Q^2
e^{4\kappa}e^{-\frac{2}{\pi}(\kappa^4-1)^{-\frac{1}{2}}|\ln\epsilon_1|}\lesssim
Q^2 e^{4\kappa-\frac{2}{\pi}\kappa^{-2}|\ln\epsilon_1|}\\
&\leq
CQ^2 e^{-\frac{2}{\pi}\kappa^{-2}|\ln\epsilon_1|(1-2\pi\kappa^3|\ln\epsilon_1|^{
-1})}\quad \forall\,\kappa\in (\varepsilon + 2^{\frac{1}{4}}A,~\infty).
\end{align*}
Hence we have
\begin{equation}\label{c2}
 |f_j|^2\lesssim Q^2
e^{-\frac{2}{\pi L^2}L^{-2}|\ln\epsilon_1|(1-2\pi L^3|\ln\epsilon_1|^{
-1})}\quad\forall~j\in (\varepsilon + 2^{\frac{1}{4}}A,~L].
\end{equation}
If $N^{\frac{3}{8}}<\frac{1}{2^{\frac{5}{6}}
\pi^{\frac{2}{3}}}|\ln\epsilon_1|^{\frac{1}{9}}$, then $2\pi
L^3|\ln\epsilon_1|^{-1}<\frac{1}{2}$ and
\begin{equation}\label{c3}
 e^{-\frac{2}{\pi}\frac{|\ln\epsilon_1|}{L^2}}\leq
e^{-\frac{2}{\pi}\frac{|\ln\epsilon_1|}{N^{\frac{3}{2}}|\ln\epsilon_1|^{\frac{
2}{9}}}}\leq
e^{-\frac{2}{\pi}\frac{|\ln\epsilon_1|^{\frac{7}{9}}}{N^\frac{3}{2}}}\leq
e^{-\frac{2}{\pi}\frac{2^5\pi^4
|\ln\epsilon_1|^{\frac{1}{9}} N^{\frac{9}{4}}}{N^{\frac{3}{2}}}}= e^{-64\pi^3
|\ln\epsilon_1|^{\frac{1}{9}}N^{\frac{3}{4}}}.
\end{equation}
Combining \eqref{c2} and \eqref{c3}, we obtain
\begin{align*}
  |f_j|^2\lesssim Q^2 e^{-32\pi^3
|\ln\epsilon_1|^{\frac{1}{9}} N^{\frac{3}{4}}}\quad\forall\kappa\in
(\varepsilon + 2^{\frac{1}{4}}A,~L].
\end{align*}
It is easy to note that
\[
 e^{-x}\leq \frac{(6(n+1)-3)!}{x^{3(2(n+1)-1)}}\quad\text{for} ~ x>0.
\]
We have
\[
 |f_j|^2\lesssim
Q^2\frac{1}{\left(\frac{|\ln\epsilon_1|^{\frac{1}{3}}N^{\frac{9}{4}}}{(6n-3)^3}\right)^{2n + 1}},
\quad j=1, \dots, L.
\]
Consequently, we obtain
\begin{align*}
|f_j|^2 &\lesssim
Q^2\frac{L}{\left(\frac{|\ln\epsilon_1|^{\frac{1}{3}}N^{\frac{9}{4}}}{(6n-3)^3}\right)^{2n+1}}
\lesssim Q^2\frac{N^{\frac{3}{4}}|\ln\epsilon_1|^{\frac{1}{9}}}{\left(\frac{
|\ln\epsilon_1|^{\frac{1}{3}}N^{\frac{9}{4}}}{(6n-3)^3}\right)^{2n+1}}\\
&\lesssim Q^2
\frac{1}{\left(\frac{|\ln\epsilon_1|^{\frac{2}{9}}N^{\frac{3}{2}}}{(6n-3)^3}\right)^{2n+1}}
\lesssim Q^2
\frac{1}{\left(\frac{|\ln\epsilon_1|^{\frac{1}{9}}N^{\frac{3}{2}}}{(6n-3)^3}\right)^{2n + 1}},
\end{align*}
where we have used $|\ln\epsilon_1|>1$ when $N^{\frac{3}{8}}<\frac{1}{2^{\frac{5}{6}}
\pi^{\frac{2}{3}}}|\ln\epsilon_1|^{\frac{1}{9}}$.
If $N^{\frac{3}{8}}<\frac{1}{2^{\frac{5}{6}}
\pi^{\frac{2}{3}}}|\ln\epsilon_1|^{\frac{1}{9}}$, we also have
\begin{align*}
\frac{1}{\left(\bigl[|\ln\epsilon_1|^{\frac{1}{9}}N^{\frac{3}{4}}\bigr]+1\right)^{2n+1}}
\leq
\frac{1}{\left(|\ln\epsilon_1|^{\frac{1}{9}}N^{\frac{3}{4}}\right)^{2n+1}}.
\end{align*}
If $N^{\frac{3}{8}}\geq \frac{1}{2^{\frac{5}{6}}
\pi^{\frac{2}{3}}}|\ln\epsilon_1|^{\frac{1}{9}}$, then $L=N$. It follows from
Lemma \ref{fj} that
\[
 \sum_{j=1}^L |f_j|^2\lesssim\epsilon^2.
\]
Combining the above estimates, we obtain
\begin{align*}
\sum_{j=1}^\infty |f_j|^2 &\lesssim
\epsilon^2+ 
\frac{Q^2}{\left(\frac{|\ln\epsilon_1|^{\frac{1}{9}}N^{\frac{3}{2}}}{(6n-3)^3}\right)^{2n+1}}\\
&\quad+ \frac{Q^2}{\bigl(|\ln\epsilon_1|^{\frac{1}{9}}N^{\frac{3}{4}}
\bigr)^{\frac{2}{3}(2n+1)}}+ 
\frac{Q^2}{\left(|\ln\epsilon_1|^{\frac{1}{9}}N^{\frac{5}{8}}
\right)^{\frac{2}{3}(2n+1)}}\\
&\lesssim \epsilon^2+\frac{Q^2}{\left(|\ln\epsilon_1|^{\frac{1}{9}}N^{\frac{5}{8}}
\right)^{\frac{2}{3}(2n+1)}},
\end{align*}
which completes the proof.
\end{proof}

It can be observed that the stability estimate \eqref{stability_1} consists of two parts: the data discrepancy and
the high frequency tail. The former is of the Lipschitz type. The latter decreases as $N$ increases which makes the problem an almost Lipschitz stability. The result reveals that the problem becomes more stable when higher
frequency data is used.

\section{Stability with zeroth order perturbation}\label{with}

In this section, we discuss the well-posedness of the direct and inverse problems for the biharmonic operator with a general potential, i.e., $H = \Delta^2 + V(x)$ where $V(x)\in L^\infty_{\rm comp}(\mathbb R^3, \mathbb{C})$.

\subsection{Resolvent estimate}

Denote by $T: L^2_{\rm comp}(\mathbb R^3)\rightarrow L^2_{\rm loc}(\mathbb R^3)$ an operator $T$ such that for any $\rho\in C_0^\infty(\mathbb R^3)$ the operator $\rho T \rho: L^2(\mathbb R^3)\rightarrow L^2(\mathbb R^3)$ is bounded. Below is the analytic Fredholm theory. The result is classical and the proof may be found in many references, e.g., \cite[Theorem 8.26]{CK}.

\begin{theorem}
Let $D$ be a domain in $\mathbb C$ and let $\mathcal{A}: D \rightarrow \mathcal{L}(X)$ be an operator
valued analytic function such that $\mathcal{A}(z)$ is compact for each $z\in D$. Then either
\begin{itemize}

\item[(a)] $(I - \mathcal{A}(z))^{-1}$ does not exist for any $z\in D$ or

\item[(b)] $(I - \mathcal{A}(z))^{-1}$ exists for all $z\in D\backslash S$ where $S$ is a discrete subset of $D$.

\end{itemize}
Here $X$ is a Banach space and $\mathcal{L}(X)$ denotes the Banach space
of bounded linear operators mapping the Banach space $X$ into itself.
\end{theorem}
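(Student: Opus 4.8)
The plan is to prove the dichotomy locally by reducing, in a neighbourhood of each point, the invertibility of $I-\mathcal{A}(z)$ to the nonvanishing of a single scalar analytic function, and then to globalize using the connectedness of $D$ together with the identity theorem for analytic functions.

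First I would localize. Fix $z_0\in D$ and, using the compactness of $\mathcal{A}(z_0)$, choose a finite-rank operator $F\in\mathcal{L}(X)$ with $\|\mathcal{A}(z_0)-F\|<\tfrac12$. Since $z\mapsto\mathcal{A}(z)$ is continuous, there is a closed disk $\overline{\Delta}=\{|z-z_0|\le r\}\subset D$ on which $\|\mathcal{A}(z)-\mathcal{A}(z_0)\|\le\tfrac12$, so that $\|\mathcal{A}(z)-F\|\le q<1$ uniformly there. Then $I-(\mathcal{A}(z)-F)$ is invertible on $\Delta$ by a Neumann series, and its inverse $B(z)$ depends analytically on $z$ (the series is a locally uniformly convergent sum of analytic operator-valued terms). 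Writing
\[
I-\mathcal{A}(z)=\bigl(I-(\mathcal{A}(z)-F)\bigr)\bigl(I-B(z)F\bigr),
\]
the invertibility of $I-\mathcal{A}(z)$ is equivalent to that of $I-B(z)F$, and $(I-\mathcal{A}(z))^{-1}=(I-B(z)F)^{-1}B(z)$ wherever the right-hand side exists.

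Second, I would reduce to finite dimensions. Since $F$ is finite rank, write $Fx=\sum_{k=1}^m\langle x,g_k\rangle\phi_k$ with $\phi_k\in X$ and $g_k\in X^\ast$; then $B(z)F$ has range in $\mathrm{span}\{B(z)\phi_1,\dots,B(z)\phi_m\}$, and $I-B(z)F$ is invertible precisely when the $m\times m$ matrix $M(z)$ with entries $\delta_{k\ell}-\langle B(z)\phi_\ell,g_k\rangle$ is invertible, i.e. when $d(z):=\det M(z)\neq0$. Because $B(z)$ is analytic, $d$ is a scalar analytic function on $\Delta$. By the identity theorem either $d\equiv0$ on $\Delta$, so $I-\mathcal{A}(z)$ is non-invertible for every $z\in\Delta$, or the zeros of $d$ are isolated, in which case $(I-\mathcal{A}(z))^{-1}$ exists off a discrete set and, since $(I-B(z)F)^{-1}$ equals $d(z)^{-1}$ times an analytic matrix, is meromorphic on $\Delta$ with poles only at the zeros of $d$.

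Finally, I would globalize. Let $S=\{z\in D:(I-\mathcal{A}(z))^{-1}\text{ does not exist}\}$ and let $U$ be its interior. The local analysis identifies $U$ with the set of points near which $d\equiv0$, so $U$ is open; the essential point is that it is also closed in $D$. If $z_n\to z_\ast$ with $z_n\in U$, then on a disk about $z_\ast$ the associated determinant vanishes on a set with an accumulation point, hence vanishes identically by the identity theorem, so $z_\ast\in U$. As $D$ is connected and $U$ is open and closed, either $U=\varnothing$, in which case $S$ is the locally isolated zero set of $d$ at each point and therefore a discrete subset of $D$, giving alternative (b); or $U=D$, which forces $S=D$, giving alternative (a). The step I expect to demand the most care is precisely this clopen argument, namely transferring the local ``identically non-invertible versus isolated zeros'' dichotomy into a consistent global statement, since it relies on the determinants constructed around different base points being compatible, something guaranteed by the identity theorem but to be invoked carefully. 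A secondary subtlety is the finite-rank approximation of $\mathcal{A}(z_0)$, which is harmless in the present application because the relevant operators act on the Hilbert space $L^2$.
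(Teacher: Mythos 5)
The paper does not actually prove this theorem: it is quoted as classical with a pointer to \cite[Theorem 8.26]{CK}, so there is no in-paper argument to compare against. Your proof is the standard one (essentially Reed--Simon, Theorem VI.14): localize, replace $\mathcal{A}(z)$ by a nearby finite-rank operator, reduce invertibility of $I-\mathcal{A}(z)$ to the nonvanishing of a scalar analytic determinant $d(z)$, invoke the identity theorem, and globalize by showing the interior of the singular set is clopen in the connected domain $D$. Those steps are carried out correctly: the factorization $I-\mathcal{A}(z)=\bigl(I-(\mathcal{A}(z)-F)\bigr)\bigl(I-B(z)F\bigr)$ is valid, the equivalence of invertibility of $I-B(z)F$ with $\det M(z)\neq 0$ holds (taking the $\phi_k$ linearly independent so that a null vector of $M(z)$ produces a genuine nonzero element of $N(I-B(z)F)$), and the clopen argument correctly yields the dichotomy.

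The one genuine issue is the very first step. The theorem is stated for an arbitrary Banach space $X$, and in that generality a compact operator need not be a norm limit of finite-rank operators: this is precisely the failure of the approximation property (Enflo's example). So ``choose $F$ finite rank with $\|\mathcal{A}(z_0)-F\|<\tfrac12$'' is not available as stated, and your argument, taken literally, proves the theorem only for spaces with the approximation property. You flag this yourself and rightly note it is harmless here, since the paper applies the result to operators on $L^2$. To get the Banach-space statement one replaces the approximation by the Riesz theory: $N(I-\mathcal{A}(z_0))$ is finite dimensional and $R(I-\mathcal{A}(z_0))$ is closed with equal finite codimension, so one can build a finite-rank $T$ (mapping a complement of the kernel data onto a complement of the range) for which $E(z_0):=I-\mathcal{A}(z_0)+T$ is bijective; then $E(z)$ is invertible with analytic inverse for $z$ near $z_0$, and $I-\mathcal{A}(z)=E(z)\bigl(I-E(z)^{-1}T\bigr)$ reduces the problem to the same finite-rank determinant computation you perform, with no approximation property needed. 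This is essentially the route taken in the cited reference. With that substitution your argument is complete in the stated generality; as written, it fully covers the case the paper needs.
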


The following theorem gives a meromorphic continuation of the resolvent of the biharmonic operator $H$. 

\begin{theorem}
The resolvent
\[
R_V= (\Delta^2 + V - \lambda^4)^{-1}: L^2(\mathbb R^3)\rightarrow L^2(\mathbb R^3)
\]
is a meromorphic family of operators with a finite number of poles on the first quadrant $\Sigma$. Moreover, the family $R_V(\lambda)$ can be extended into a meromorphic family of the whole complex plane $\mathbb C$ in the sense that 
$
\rho R_V(\lambda) \rho : L^2(\mathbb R^3) \to H^4(\mathbb R^3)
$
 is bounded for any $\rho\in C_c^\infty(\mathbb R^3)$.
\end{theorem}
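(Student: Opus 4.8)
The plan is to realize $R_V(\lambda)$ through the free resolvent $R_0(\lambda)$ of Theorem~\ref{free_estimate} and a Birman--Schwinger reduction, so that the meromorphic continuation is reduced to inverting $I$ plus a compact operator via the analytic Fredholm theory stated above. Since $V\in L^\infty_{\rm comp}(\mathbb{R}^3,\mathbb{C})$ is complex, I would first factor it as $V=V_1V_2$ with $V_1=|V|^{1/2}$ and $V_2=|V|^{1/2}\,\mathrm{sgn}\,V$; both factors are bounded and supported in $B_R$. Formally solving $(\Delta^2+V-\lambda^4)u=f$ by writing $(\Delta^2-\lambda^4)u=f-Vu$ and applying $R_0(\lambda)$ suggests the identity
\[
R_V(\lambda)=R_0(\lambda)-R_0(\lambda)V_1\bigl(I+\mathcal{Q}(\lambda)\bigr)^{-1}V_2R_0(\lambda),\qquad \mathcal{Q}(\lambda):=V_2R_0(\lambda)V_1,
\]
which I would verify directly from the second resolvent identity; the whole problem then rests on the family $\mathcal{Q}(\lambda)$.

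Next I would establish that $\mathcal{Q}(\lambda)$ is an analytic family of compact operators on $L^2(\mathbb{R}^3)$ for $\lambda\in\mathbb{C}\setminus\{0\}$. Because $V_1,V_2$ are compactly supported, choosing $\rho\in C_0^\infty(\mathbb{R}^3)$ with $\rho\equiv1$ on $B_R$ gives $\mathcal{Q}(\lambda)=V_2(\rho R_0(\lambda)\rho)V_1$, so the Hilbert--Schmidt bound and the analyticity of $\rho R_0(\lambda)\rho$ furnished by Theorem~\ref{free_estimate} pass directly to $\mathcal{Q}(\lambda)$. I can then apply the analytic Fredholm theory above to $I+\mathcal{Q}(\lambda)$ on the domain $\mathbb{C}\setminus\{0\}$. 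To exclude the alternative in which $(I+\mathcal{Q}(\lambda))^{-1}$ never exists, I would use \eqref{free} with $j=0$, which in the sector $\Sigma$ (where $(\Im\lambda)_-$ and $(\Re\lambda)_-$ both vanish) gives $\|\mathcal{Q}(\lambda)\|_{L^2\to L^2}\lesssim\|V\|_\infty|\lambda|^{-2}$; hence $\|\mathcal{Q}(\lambda)\|<1$ and the Neumann series converges once $|\lambda|$ is large in $\Sigma$. Consequently $(I+\mathcal{Q}(\lambda))^{-1}$ exists off a discrete set $S\subset\mathbb{C}\setminus\{0\}$ and is meromorphic there.

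Feeding this back into the resolvent identity yields the continuation. On $\Sigma$ all factors act boundedly on $L^2$, so $R_V(\lambda)$ is the genuine resolvent, meromorphic with poles in $S\cap\Sigma$. For general $\lambda$ I would sandwich by $\rho$: writing $V_1=V_1\rho$ and $V_2=\rho V_2$, the piece $\rho R_0(\lambda)V_1=(\rho R_0(\lambda)\rho)V_1$ maps $L^2\to H^4$ boundedly by the case $j=4$ of \eqref{free}, and likewise for $V_2R_0(\lambda)\rho$; composing with the bounded meromorphic $(I+\mathcal{Q}(\lambda))^{-1}$ shows $\rho R_V(\lambda)\rho:L^2(\mathbb{R}^3)\to H^4(\mathbb{R}^3)$ is bounded and meromorphic on $\mathbb{C}\setminus\{0\}$, which is the asserted extension.

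The main obstacle is the finiteness of the poles in $\Sigma$. The key point is again \eqref{free} with $j=0$: the bound $\|\mathcal{Q}(\lambda)\|\lesssim\|V\|_\infty|\lambda|^{-2}$ holds \emph{uniformly} in the argument of $\lambda$ throughout $\Sigma$, so $(I+\mathcal{Q}(\lambda))^{-1}$ exists for all $\lambda\in\Sigma$ with $|\lambda|>C$. Thus every pole in $\Sigma$ lies in the bounded set $\{\lambda\in\Sigma:|\lambda|\le C\}$, and since $S$ is discrete in $\mathbb{C}\setminus\{0\}$ it meets this set in finitely many points, provided the poles do not accumulate at the corner $\lambda=0$. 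Ruling out such accumulation is the delicate step: I would treat it either via the mild singularity of the kernel \eqref{kernel} near $\lambda=0$, or, invoking the standing hypothesis $V\ge0$, by noting that $\Delta^2+V$ is then self-adjoint and nonnegative so that $\sigma(\Delta^2+V)\subset[0,\infty)$; since $\lambda\mapsto\lambda^4$ maps $\Sigma$ onto $\mathbb{C}\setminus[0,\infty)$, disjoint from the spectrum, $R_V(\lambda)$ then has no poles in $\Sigma$ at all. With this in hand the finiteness, and hence the theorem, follows.
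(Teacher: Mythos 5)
Your proposal is correct and reaches the theorem by a route that is related to, but genuinely different from, the paper's. Both arguments rest on the same two pillars --- the compactly sandwiched free resolvent $\rho R_0(\lambda)\rho$ from Theorem~\ref{free_estimate} and the analytic Fredholm alternative --- but you organize the perturbation symmetrically, factoring $V=V_1V_2$ and inverting $I+\mathcal{Q}(\lambda)$ with $\mathcal{Q}=V_2R_0V_1$, whereas the paper works with the one-sided $T(\lambda)=VR_0(\lambda)$, which makes sense as a bounded operator only on $\Sigma$ and must be repaired for the continuation by the algebraic identity $(I+T)^{-1}=(I+T\rho)^{-1}(I-T(1-\rho))$ together with the support-propagation computation \eqref{cutoff}. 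Your symmetrization buys you that $\mathcal{Q}(\lambda)=V_2\bigl(\rho R_0(\lambda)\rho\bigr)V_1$ is automatically an analytic family of compact operators on all of $\mathbb{C}\setminus\{0\}$, so a single application of analytic Fredholm covers the quadrant $\Sigma$ and the continuation simultaneously, and the mapping property $\rho R_V(\lambda)\rho:L^2\to H^4$ drops out of \eqref{free} with $j=4$ exactly as in the paper; the price is verifying the Birman--Schwinger identity, which you correctly reduce to $(I+VR_0)V_1=V_1(I+\mathcal{Q})$. On the finiteness of the poles you are in fact more scrupulous than the paper, which passes silently from ``discrete in $\Sigma$'' to ``finitely many''; your observation that the uniform bound $\|\mathcal{Q}(\lambda)\|\lesssim\|V\|_\infty|\lambda|^{-2}$ on $\Sigma$ confines all poles to a bounded set is the right first step. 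Two caveats on how you propose to finish: the $V\ge 0$ shortcut is unavailable here, since this section explicitly allows complex-valued $V\in L^\infty_{\rm comp}(\mathbb{R}^3,\mathbb{C})$; and the singularity of the kernel \eqref{kernel} at $\lambda=0$ is not removable (the correct prefactor is $\lambda^{-2}$, and $e^{{\rm i}\lambda r}-e^{-\lambda r}=(1+{\rm i})\lambda r+O(\lambda^2)$ leaves a simple pole whose residue is the rank-one operator with kernel proportional to $\rho(x)\rho(y)$). The clean way to exclude accumulation of poles at $\lambda=0$ is therefore the meromorphic Fredholm theorem, which applies in a full neighbourhood of $0$ precisely because that residue is finite rank. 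With that one addition your argument is complete, and it supplies a justification of the ``finite number of poles'' claim that the paper's own proof does not spell out.
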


\begin{proof}
First we consider the case  $R_V(\lambda)$ for $\Re\lambda \gg1$ and $\Im\lambda\gg 1$. Noting the equality 
\begin{align}\label{equality}
(\Delta^2 + V(x) -\lambda^4)R_0(\lambda) = (\Delta^2 - \lambda^4) R_0(\lambda) + V(x)R_0(\lambda) = I + V(x)R_0(\lambda),
\end{align}
and recalling the free resolvent estimate \eqref{spectral1}:
\[
\|R_0(\lambda)\|_{L^2(\mathbb R^3)\rightarrow L^2(\mathbb R^3)}\leq \frac{1}{|\lambda|^2(\Im\lambda)
(\Re\lambda)} \leq (\Im\lambda)^{-2}(\Re\lambda)^{-2}, \quad \lambda \in \Sigma,
\]
we obtain for $\Re\lambda\gg 1$ and $\Im\lambda\gg 1$ that
\begin{align*}
\|VR_0(\lambda)\|_{L^2(\mathbb R^3)\rightarrow L^2(\mathbb R^3)} \leq \|V\|_{L^\infty(\mathbb R^3)}\|R_0(\lambda)\|_{L^2(\mathbb R^3)\rightarrow L^2(\mathbb R^3)}\leq\frac{\|V\|_{L^\infty(\mathbb R^3)}}{(\Im\lambda)^2
(\Re\lambda)^2}\leq\frac{1}{2}.
\end{align*}
Hence the operator $I + VR_0(\lambda)$ is invertible and the Neumann series reads
\[
(I + VR_0(\lambda))^{-1} = \sum_{k=0}^\infty (-1)^k (VR_0(\lambda))^k.
\]
Combining with \eqref{equality} gives that
\[
R_V(\lambda) = R_0(\lambda) (I + VR_0(\lambda))^{-1}
\]
are well-defined bounded operators of $\mathcal B(L^2, L^2)$ for $\Re\lambda\gg 1$ and $\Im\lambda\gg 1$. Moreover, it is easy to see that $VR_0(\lambda)$ is an analytic family of compact operators on $\Sigma$. Consequently, it follows from the analytic Fredholm theorem that $ (I + VR_0(\lambda))^{-1}$ is in fact a meromorphic family of operators for $\lambda\in\Sigma$. which implies that $R_V(\lambda): L^2(\mathbb R^3)\rightarrow L^2(\mathbb R^3)$ is a meromorphic family of operators in $\Sigma$.

Next, we consider the extension of $R_V(\lambda)$ from $\Sigma$ to the whole complex plane $\mathbb C$ as the operator $L_{\rm comp}^2(\mathbb R^3)\rightarrow H^4_{\rm loc}(\mathbb R^3)$. To this end, we define the following meromorphic family of operators:
\[
T(\lambda) = VR_0(\lambda): L_{\rm comp}^2(\mathbb R^3)\rightarrow L^2_{\rm comp}(\mathbb R^3).
\]
Since $V\in L^\infty_{\rm comp}(\mathbb R^3)$ with a compact support, we can choose $\rho\in C_0^\infty(\mathbb R^3)$ such that $\rho(x) = 1$ on $\text{supp} V$. Thus, by checking $\rho T(\lambda) = \rho V R_0(\lambda) = VR_0{\lambda} = T(\lambda)$, we know that $(1 - \rho)T(\lambda) = 0$, 
\[
(I + T(\lambda)(1 - \rho))^{-1} = I - T(\lambda)(1 - \rho)
\]
and
\[
(I + T(\lambda))^{-1} = (I + T(\lambda)\rho)^{-1} (I - T(\lambda)(1 - \rho)).
\]
Therefore
\begin{align}\label{expression1}
R_V(\lambda) = R_0(\lambda)(I + T(\lambda))^{-1} =  R_0(\lambda) (I + T(\lambda)\rho)^{-1} (I - T(\lambda)(1 - \rho)).
\end{align}
Note that
\[
I - T(\lambda)(1 - \rho): L^2_{\rm comp}(\mathbb R^3)\rightarrow L^2_{\rm comp}(\mathbb R^3)
\]
and
\[
R_0(\lambda):  L^2_{\rm comp}(\mathbb R^3)\rightarrow H^4_{\rm loc}(\mathbb R^3)
\]
are both meromorphic for $\lambda \in \mathbb{C}$. Hence in order to obtain the meromorphic extension of $R_V(\lambda)$ to $\mathbb C$, it suffices to prove
\[
(I + T(\lambda)\rho)^{-1} : L^2_{\rm comp}(\mathbb R^3)\rightarrow L^2_{\rm comp}(\mathbb R^3)
\]
is a meromorphic family of operators on $\mathbb C$. Since by $V(x) = V(x)\rho(x)$ we have $T(\lambda)\rho = V\rho R_0(\lambda)\rho$ and 
\begin{align*}
\|T(\lambda)(\rho)\|_{L^2_{\rm comp}\rightarrow L^2_{\rm comp}} &= \|V\rho R_0(\lambda)\|_{L^2_{\rm comp}(\mathbb R^3)\rightarrow L^2_{\rm comp}(\mathbb R^3)}\\
&\leq\|V\|_{L^\infty} \|\rho R_0{\lambda}\rho\|_{L^2_{\rm comp}(\mathbb R^3)\rightarrow L^2_{\rm comp}(\mathbb R^3)}\\
&\leq C|\lambda|^{-2} (e^{2R(\Im\lambda)_-} + e^{2R(\Re\lambda)_-})\\
&\leq\frac{1}{2}
\end{align*}
for  $\Re\lambda\gg 1$ and $\Im\lambda\gg 1$. Hence it follows from the Neumann series the operator $(I + T(\lambda)\rho)^{-1}: L^2(\mathbb R^3)\rightarrow L^2(\mathbb R^3)$ exists for $\Re\lambda\gg 1$ and $\Im\lambda\gg 1$. Moreover, for any $\lambda\in\mathbb{C}\backslash \{0\},$ the operator $T(\lambda)\rho = V\rho R_0(\lambda)\rho$ is compact on $L^2(\mathbb R^3)$ by \eqref{free}. Therefore, it follows from the analytic Fredholm theorem that $(I + T(\lambda)\rho)^{-1}: L^2(\mathbb R^3)\rightarrow L^2(\mathbb R^3)$ is meromorphic on $\mathbb C$.

Finally, it remains to show that $(I + T(\lambda)\rho)^{-1}$ is 
 $L^2_{\rm comp}(\mathbb R^3)\rightarrow L^2_{\rm comp}(\mathbb R^3)$ for all $\lambda\in\mathbb{C}\backslash\{0\}$. In fact, we can choose $\chi, \tilde{\chi}\in C_0^\infty$ such that $\chi\rho = \rho$ and that $\tilde{\chi}\chi = \chi$, then $(1 - \tilde{\chi})\rho = 0$. Moreover, when $\Re\lambda\gg 1, \Im\lambda\gg 1$, by the Neumann series argument and $V\rho = V$, we have
\begin{align}\label{cutoff}
(1 - \tilde{\chi})(I + T(\lambda)\rho)^{-1}\chi &= (1 - \tilde{\chi})\chi + \sum_{k=1}^\infty (-1)^k (1 - \tilde{\chi}) (T(\lambda)\rho)^k\chi\notag\\
&= \sum_{k=1}^\infty (-1)^k (1 - \tilde{\chi}) (V\rho R_0(\lambda)\rho)^k\chi\notag\\
&= \sum_{k=1}^\infty (-1)^k (1 - \tilde{\chi}) (V\rho R_0(\lambda)\rho)(V\rho R_0(\lambda)\rho)^{k-1}\chi\notag\\
&=0,
\end{align}
where the last equality uses $(1 - \tilde{\chi})\rho = 0$. By the analytic continuation, \eqref{cutoff} remains true for all $\lambda\in\mathbb{C}$. Therefore, by the expression \eqref{expression1} of $R_V$ we obtain that $R_V(\lambda)$ is meromorphic of $\lambda$ on $ \mathbb{C}$ as a family of operators $ L^2_{\rm comp}(\mathbb R^3)\rightarrow H^4_{\rm loc}(\mathbb R^3)$, which completes the proof. 
\end{proof}

In the following theorem, we further give a resonance-free region and a resolvent estimate of $\rho R_V(\lambda)\rho: L^2(\mathbb R^3)\rightarrow H^4(\mathbb R^3)$ for a given $\rho\in C_0^\infty(\mathbb R^3)$, which play a crucial role in the  stability analysis for the inverse problem. 

\begin{theorem}\label{bound_2}
Let $V(x)\in L^\infty_{\rm comp}(\mathbb R^3, \mathbb C)$. Then for any given $\rho\in C_0^\infty(\mathbb R^3)$ satisfying $\rho V = V$, i.e., $\text{supp} (V)\subset \text{supp} (\rho)\subset\subset B_R$, there exists a positive constant $C$ depending on $\rho$ and $V$ such that
\begin{align}\label{bound_3}
\|\rho R_V(\lambda)\rho\|_{L^2(B_R)\rightarrow H^j(B_R)}\leq C|\lambda|^{-2 + j} (e^{2R(\Re\lambda)_-}+ 
e^{2R(\Im\lambda)_-}),\quad j = 0, 1, 2, 3, 4,
\end{align}
where $\lambda\in \Omega_\delta$. Here $\Omega_\delta$ denotes the resonance-free region defined as
\begin{align*}
\Omega_\delta:=\Big\{\lambda: {\Im}\lambda&\geq - A - \delta {\rm log}(1 + |\lambda|), \,{\Re}\lambda \geq - A - \delta {\rm log}(1 + |\lambda|), |\lambda|\geq C_0\Big\},
\end{align*}
where $A$ and $C_0$ are two positive constants and $\delta$ satisfies $0<\delta<\frac{1}{2R}$. 
\end{theorem}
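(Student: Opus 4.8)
The plan is to reduce the perturbed resolvent to the free one via the second resolvent identity and to read off the resonance-free region from the smallness of a compact perturbation. Setting $K(\lambda) := V\,(\rho R_0(\lambda)\rho)$ and invoking the hypothesis $\rho V = V$, which gives the pointwise identities $V = \rho V = V\rho = \rho V \rho$, I would sandwich the identity $R_V(\lambda) = R_0(\lambda) - R_V(\lambda)VR_0(\lambda)$ between the cutoffs $\rho$. Writing $V = \rho V \rho$ in the product term, this yields
\begin{align*}
\rho R_V(\lambda)\rho = (\rho R_0(\lambda)\rho) - (\rho R_V(\lambda)\rho)\,V\,(\rho R_0(\lambda)\rho),
\end{align*}
and hence the clean factorization
\begin{align*}
\rho R_V(\lambda)\rho = (\rho R_0(\lambda)\rho)\,(I + K(\lambda))^{-1}.
\end{align*}
This holds first for $\Re\lambda\gg 1,\ \Im\lambda\gg 1$, where the Neumann series for $(I+K(\lambda))^{-1}$ converges, and then extends to $\mathbb{C}\backslash\{0\}$ as an $L^2_{\rm comp}\to H^4_{\rm loc}$ statement by the meromorphic continuation recorded in \eqref{expression1}. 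Its virtue is that only the compactly supported sandwich $\rho R_0(\lambda)\rho$ appears, so every factor is controlled by the free estimate \eqref{free}.

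The heart of the argument is to show that $\Omega_\delta$ is resonance-free, i.e.\ that $I + K(\lambda)$ is invertible there with a uniform bound. Applying \eqref{free} with $j=0$,
\begin{align*}
\|K(\lambda)\|_{L^2(B_R)\to L^2(B_R)} \leq \|V\|_{L^\infty}\,\|\rho R_0(\lambda)\rho\|_{L^2(B_R)\to L^2(B_R)} \lesssim |\lambda|^{-2}\big(e^{2R(\Im\lambda)_-} + e^{2R(\Re\lambda)_-}\big).
\end{align*}
On $\Omega_\delta$ one has $(\Im\lambda)_- \leq A + \delta\log(1+|\lambda|)$ and $(\Re\lambda)_- \leq A + \delta\log(1+|\lambda|)$, so $e^{2R(\Im\lambda)_-}+e^{2R(\Re\lambda)_-}\lesssim (1+|\lambda|)^{2R\delta}$ and therefore $\|K(\lambda)\|_{L^2\to L^2}\lesssim |\lambda|^{-2+2R\delta}$. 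Here the constraint $0<\delta<\tfrac{1}{2R}$ forces the exponent $-2+2R\delta<-1$, so the norm tends to $0$ as $|\lambda|\to\infty$; choosing $C_0$ large gives $\|K(\lambda)\|_{L^2\to L^2}\leq\tfrac12$ throughout $\Omega_\delta$. Consequently $I+K(\lambda)$ is invertible with $\|(I+K(\lambda))^{-1}\|_{L^2\to L^2}\leq 2$, and since the poles of $R_V$ are exactly the points where $I+K(\lambda)$ fails to be invertible, $\Omega_\delta$ carries no resonances.

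The estimate \eqref{bound_3} then follows by combining the two ingredients. For $\lambda\in\Omega_\delta$ and $j\in\{0,1,2,3,4\}$,
\begin{align*}
\|\rho R_V(\lambda)\rho\|_{L^2(B_R)\to H^j(B_R)} \leq \|\rho R_0(\lambda)\rho\|_{L^2(B_R)\to H^j(B_R)}\,\|(I+K(\lambda))^{-1}\|_{L^2\to L^2} \lesssim |\lambda|^{-2}(1+\lambda^2)^{\frac{j}{2}}\big(e^{2R(\Im\lambda)_-}+e^{2R(\Re\lambda)_-}\big),
\end{align*}
and using $|1+\lambda^2|\leq 2|\lambda|^2$ for $|\lambda|\geq C_0$, so that $|(1+\lambda^2)^{j/2}|\lesssim|\lambda|^{j}$, I obtain the claimed bound $\|\rho R_V(\lambda)\rho\|_{L^2(B_R)\to H^j(B_R)}\leq C|\lambda|^{-2+j}(e^{2R(\Re\lambda)_-}+e^{2R(\Im\lambda)_-})$. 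The main obstacle, and the only place where the precise geometry of $\Omega_\delta$ enters, is establishing the smallness of $\|K(\lambda)\|$: one must play the logarithmic widening of the region off against the $|\lambda|^{-2}$ gain from the free resolvent, and it is precisely $\delta<\tfrac{1}{2R}$ that keeps the factor $e^{2R(\cdot)_-}$ growing slower than $|\lambda|^2$, so the perturbation stays a contraction at infinity.
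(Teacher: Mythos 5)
Your proposal is correct and follows essentially the same route as the paper: a resolvent identity reduces $\rho R_V(\lambda)\rho$ to $\rho R_0(\lambda)\rho$ composed with $(I+VR_0(\lambda)\rho)^{-1}$, and the Neumann series converges on $\Omega_\delta$ precisely because $\delta<\tfrac{1}{2R}$ makes $|\lambda|^{-2}(1+|\lambda|)^{2R\delta}\to 0$, so $C_0$ can be chosen to make the perturbation a contraction. The only difference is cosmetic: you read off the $H^j$ bounds for all $j$ directly from the factorization together with the free estimate \eqref{free}, whereas the paper treats $j=4$ separately via elliptic regularity for $\Delta^2$ applied to $R_V$ and interpolates for $j=1,2,3$ --- both yield the stated $|\lambda|^{-2+j}$ growth.
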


\begin{proof}
By the estimate \eqref{free2} we obtain 
\begin{align}\label{free3}
\|\rho_1 R_0(\lambda)\rho_1\|_{L^2(\mathbb R^3)\to L^2(\mathbb R^3)} \leq C |\lambda|^{-2} (e^{2R(\Re\lambda)_-}+ e^{2R(\Im\lambda)_-}).
\end{align}
Choosing $\rho_1\in L_{\rm comp}^\infty(\mathbb R^3)$  such that $\rho_1V = V$ (e.g., $\rho_1 (x)= \chi_{{\rm supp}V}(x)$),  we have
\begin{align*}
\|V R_0(\lambda)\rho_1\|_{L^2(\mathbb R^3)\rightarrow L^2(\mathbb R^3)} &= \|V\rho_1 R_0(\lambda)\rho_1\|_{L^2(\mathbb R^3)\rightarrow L^2(\mathbb R^3)}\\
&\lesssim \|V\|_{L^\infty(\mathbb R^3)}|\lambda|^{-2} (e^{2R(\Re\lambda)_-}+ e^{2R(\Im\lambda)_-})\\
&\lesssim \|V\|_{L^\infty(\mathbb R^3)}|\lambda|^{-2}e^{2R(A + \delta\text{log}(1 + |\lambda|))}\\
&\lesssim  \|V\|_{L^\infty(\mathbb R^3)}|\lambda|^{-2}\leq\frac{1}{2}
\end{align*}
provided that $\lambda \in \Omega_\delta$ with $\Omega_\delta$ being defined by
\begin{align*}
\Omega_\delta:=\Big\{\lambda: \Im\lambda&\geq - A - \delta {\rm log}(1 + |\lambda|), \,\Re\lambda \geq - A - \delta {\rm log}(1 + |\lambda|), \, |\lambda|\geq C_0\Big\},
\end{align*}
which is shown in Figure \ref{dR}. Here we let $A$ be a positive constant, $C_0\gg 1$ and $\delta<\frac{1}{2R}.$
Hence by the Neumann series argument we can prove that the inverse operator $(I + VR_0(\lambda)\rho_1)^{-1}$ exists for all $\lambda\in\Omega_\delta$, and
\begin{align}\label{free4}
\|(I + VR_0(\lambda)\rho_1)^{-1}\|_{L^2(\mathbb R^3)\rightarrow L^2(\mathbb R^3)} = \|(I + V\rho_1R_0(\lambda)\rho_1)^{-1}\|_{L^2(\mathbb R^3)\rightarrow L^2(\mathbb R^3)}\leq 2.
\end{align}

\begin{figure}
\centering
\includegraphics[width=0.8\textwidth]{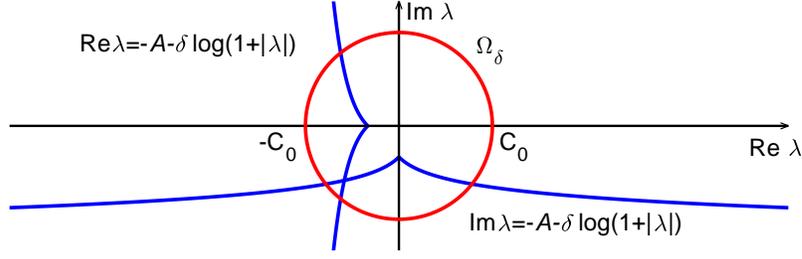}
\caption{The resonance-free region $\Omega_\delta$.}
\label{dR}
\end{figure}

Now let $\rho\in C_0^\infty(\mathbb R^3)$ such that $\rho V = V$. Then we have 
\[
\rho R_V(\lambda) \rho = \rho R_V(\lambda) \rho (I + VR_0(\lambda)\rho_1)^{-1} (I - VR_0(\lambda)(1 -\rho_1))\rho.
\]
Consequently, combining  \eqref{free3} and \eqref{free4} we obtain the desired estimate for $j = 0$ as
\[
\|\rho R_V(\lambda)\rho\|_{L^2(\mathbb R^3)\rightarrow L^2(\mathbb R^3)} \leq C |\lambda|^{-2}
\big(e^{2R(\Re\lambda)_-}+ e^{2R(\Im\lambda)_-}\big).
\]
 For the case $j = 4$, let $\tilde{\rho}\in C_0^\infty(\mathbb R^3)$ such that $\tilde{\rho} = 1$ on $\text{supp}\rho$ and $\text{supp}\tilde{\rho} \subset B_R$. Then it holds that
\begin{align*}
\|\rho R_V\rho f\|_{H^4(B_R)} &\lesssim  \big(\|\tilde{\rho} R_V(\lambda)\rho f\|_{L^2(B_R)} + \|\tilde{\rho} \Delta^2 R_V(\lambda)\rho f\|_{L^2(B_R)}\big)\\
&\lesssim \big(\|\tilde{\rho} R_V(\lambda)\rho f\|_{L^2(B_R)} + \|\tilde{\rho} H R_V(\lambda)\rho f\|_{L^2(B_R)} + \|\tilde{\rho} V R_V(\lambda)\rho f\|_{L^2(B_R)}\big)\\
&\lesssim  (1+ \lambda^2)^2  \|\tilde{\rho} R_V(\lambda)\rho f\|_{L^2(B_R)} \\
&\lesssim (1+ \lambda^2)^2 |\lambda|^{-2} \big(e^{2R(\Re\lambda)_-}+ e^{2R(\Im\lambda)_-}\big)\|f\|_{L^2(B_R)}
\end{align*}
for $\lambda\in\Omega_\delta$. Finally, the cases of $j =1, 2, 3$ follows by an application of the interpolation between $j = 0$ and $j = 4$. 
\end{proof}

By Theorem \ref{bound_2}, the scattering problem \eqref{main_eq}--\eqref{src} has a unique solution for all positive wavenumbers $\kappa\geq C_0$, which is stated below. 

\begin{corollary}
Let $V(x)\in L^\infty_{\rm comp}(\mathbb R^3, \mathbb C)$. For all positive wavenumbers $\kappa\geq C_0$ where $C_0$ is specified in Theorem \ref{bound_2}, the scattering problem \eqref{main_eq}--\eqref{src} admits a unique solution $u\in H^4(B_R)$ such that 
\[
\|u\|_{H^4(B_R)}\lesssim \|f\|_{L^2(B_R)}.
\]
\end{corollary}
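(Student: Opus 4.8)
The plan is to realize the solution as $u = R_V(\kappa)f$ and to read off all three assertions (existence, the $H^4$ bound, and uniqueness) from the resolvent theory of Theorem \ref{bound_2}. First I would check that a real wavenumber $\kappa\geq C_0$ lies in the resonance-free region $\Omega_\delta$: indeed $\Im\kappa = 0 \geq -A-\delta\,{\rm log}(1+\kappa)$, $\Re\kappa = \kappa>0\geq -A-\delta\,{\rm log}(1+\kappa)$, and $|\kappa|=\kappa\geq C_0$. Hence $R_V(\kappa)$ is pole-free at $\lambda=\kappa$, so $u:=R_V(\kappa)f$ is well-defined for $f\in L^2(B_R)\subset L^2_{\rm comp}(\mathbb R^3)$ and satisfies $(\Delta^2 + V - \kappa^4)u = f$ by the very definition of the resolvent, giving $u\in H^4_{\rm loc}(\mathbb R^3)$ and thus $u|_{B_R}\in H^4(B_R)$.

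For the radiation condition I would use the representation $R_V(\kappa) = R_0(\kappa)(I + VR_0(\kappa))^{-1}$ from the preceding theorem and set $g = (I + VR_0(\kappa))^{-1}f$, so that $u = R_0(\kappa)g$. From $g = f - VR_0(\kappa)g$, together with the fact that $f$ and $V$ are both supported in $B_R$, the effective source $g$ is compactly supported; then $u = R_0(\kappa)g = \int_{\mathbb R^3} R_0(\kappa,x,y)g(y)\,{\rm d}y$ inherits the radiation condition \eqref{src} from the outgoing free kernel \eqref{kernel}, which was already observed to satisfy \eqref{src} in $x$ for fixed $y$ and $\kappa>0$.

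The quantitative bound follows directly from Theorem \ref{bound_2}. I would pick $\chi\in C_0^\infty(\mathbb R^3)$ equal to $1$ on a ball slightly larger than $B_R$, so that $\chi f = f$, $\chi V = V$, and $\chi u = u$ on $B_R$. Applying the case $j=4$ of \eqref{bound_3} with $\rho=\chi$ (the reference ball correspondingly enlarged, which is harmless because on the positive real axis $(\Re\kappa)_- = (\Im\kappa)_- = 0$ so the exponential factors reduce to a constant) gives $\|u\|_{H^4(B_R)}\leq \|\chi R_V(\kappa)\chi f\|_{H^4(\mathbb R^3)}\lesssim |\kappa|^{2}\|f\|_{L^2(B_R)}\lesssim \|f\|_{L^2(B_R)}$, where the last $\lesssim$ absorbs the $\kappa$-dependent factor into the generic constant for the fixed wavenumber under consideration.

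The step demanding the most care is uniqueness. Given two solutions, their difference $w\in H^4_{\rm loc}(\mathbb R^3)$ solves the homogeneous equation $(\Delta^2 - \kappa^4)w = -Vw$ subject to \eqref{src}, with compactly supported right-hand side $-Vw\in L^2_{\rm comp}(\mathbb R^3)$. Invoking the uniqueness for the unperturbed problem (the $V\equiv 0$ corollary) to represent this outgoing solution, I would write $w = -R_0(\kappa)(Vw)$; applying $V$ then yields $(I + VR_0(\kappa))(Vw) = 0$. Since $\kappa\in\Omega_\delta$, the operator $I + VR_0(\kappa)$ is invertible on $L^2_{\rm comp}(\mathbb R^3)$ — this is exactly the content of the Neumann-series bound \eqref{free4} behind Theorem \ref{bound_2} — so $Vw = 0$ and hence $w = -R_0(\kappa)(Vw) = 0$. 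The genuinely delicate point is the passage from ``$\kappa$ avoids the poles of the meromorphically continued resolvent'' to ``there is no nontrivial outgoing homogeneous solution''; this rests on the free-case Rellich-type uniqueness used to justify the representation $w = -R_0(\kappa)(Vw)$, and is where I expect the main difficulty to lie.
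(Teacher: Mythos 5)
Your proposal is correct and follows the route the paper intends: the paper offers no proof at all for this corollary, simply asserting that it follows from Theorem \ref{bound_2}, and your argument (real $\kappa\geq C_0$ lies in $\Omega_\delta$, $u=R_V(\kappa)f$ solves the equation and radiates via $u=R_0(\kappa)g$ with $g$ compactly supported, the $j=4$ case of \eqref{bound_3} gives the $H^4$ bound, and uniqueness reduces to invertibility of $I+VR_0(\kappa)\rho_1$ together with the free-case uniqueness of Corollary 2.2) is exactly the natural fleshing-out of that assertion. You also correctly identify the one point the paper leaves implicit, namely the Rellich-type representation $w=-R_0(\kappa)(Vw)$ for an outgoing homogeneous solution.
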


\subsection{Inverse problem}

In this section, we discuss the uniqueness and stability of the inverse problem for the general case with a nontrivial potential $V(x)\geq 0$.

Again, we begin with the the spectrum of the operator $H$ with the Navier boundary condition. Let $\{\mu_j, \phi_j\}_{j=1}^\infty$ be the positive increasing
eigenvalues and eigenfunctions of $H$ in $B_R$, where $\phi_j$ and $\mu_j$
satisfy
\[
\begin{cases}
(\Delta^2 + V(x)) \phi_j(x)=\mu_j\phi_j(x)&\quad\text{in }B_R,\\
\Delta \phi_j(x) = \phi_j(x)=0&\quad\text{on }\partial B_R.
\end{cases}
\]
Assume that $\phi_j$ is normalized such that
\[
\int_{B_R}|\phi_j(x)|^2{\rm d}x=1.
\] 
Consequently, we obtain the spectral decomposition of $f$:
\[
f(x)=\sum_{j=1}^\infty f_j\phi_j(x),
\]
where
\[
f_j=\int_{B_R}f(x)\bar{\phi}_j(x){\rm
d}x.
\]
It is clear that 
\[
 \|f\|^2_{L^2(B_R)} = \sum_{j} |f_j|^2.
\]

The following lemma gives a link between the values of an analytical function for small and large arguments. The proof can be found in \cite[Lemma A.1]{LZZ}. 

\begin{lemma}\label{ac}
Let $p(z)$ be analytic in the infinite rectangular slab
\[
D = \{z\in \mathbb C: (\tilde{A}, +\infty)\times (-d, d) \}, 
\]
where $\tilde{A}$ is a positive constant, and continuous in $\overline{D}$ satisfying
\begin{align*}
\begin{cases}
|p(z)|\leq \tilde{\epsilon}, &\quad z\in (\tilde{A}, \tilde{A}_1],\\
|p(z)|\leq M, &\quad z\in R,
\end{cases}
\end{align*}
where $\tilde{A}, \tilde{A}_1, \tilde{\epsilon}$ and $M$ are positive constants. Then there exists a function $\eta(z)$ with $z\in (\tilde{A}_1, +\infty)$ satisfying 
\begin{equation*}
\eta(z) \geq \frac{64ad}{3\pi^2(a^2 + 4d^2)} e^{\frac{\pi}{2d}(\frac{a}{2} - z)},
\end{equation*}
where $a = \tilde{A}_1 - \tilde{A}$, such that
\begin{align*}
|p(z)|\leq M\tilde{\epsilon}^{\eta(z)}\quad \forall\, z\in (\tilde{A}_1, +\infty).
\end{align*}
\end{lemma}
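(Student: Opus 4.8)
The plan is to recast the statement as a two-constants (harmonic measure) estimate for the subharmonic function $\log|p|$ and then to read off the explicit exponential rate from the conformal geometry of the strip. First I would set $v(z)=\log|p(z)|$, which is subharmonic on $D$ and continuous on $\overline{D}$; the hypotheses become $v\le\log\tilde\epsilon$ on the real segment $I=(\tilde A,\tilde A_1]$ and $v\le\log M$ on all of $\overline{D}$. After replacing $M$ by $\max(M,1)$ (which only weakens the bound) it suffices to exhibit a positive function $\eta(z)$ with $v(z)\le\log M-\eta(z)\log(M/\tilde\epsilon)$ for real $z>\tilde A_1$, since this rearranges to $|p(z)|\le M^{1-\eta(z)}\tilde\epsilon^{\eta(z)}\le M\tilde\epsilon^{\eta(z)}$, which is the desired conclusion.

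Because the smallness lives on the interior segment $I$ rather than on $\partial D$, I would work in the slit domain $\Omega=D\setminus I$. On the outer boundary of $\Omega$ (the lines $\Im z=\pm d$ and the edge $\Re z=\tilde A$) we have $v\le\log M$, whereas on both sides of the slit $I$ we have $v\le\log\tilde\epsilon$. The natural choice is $\eta(z)=\omega(z)$, the harmonic measure $\omega(\cdot,I,\Omega)$ of the slit: the function $\log M-\omega(z)\log(M/\tilde\epsilon)$ is harmonic in $\Omega$, equals $\log M\ge v$ on the outer boundary (where $\omega=0$) and $\log\tilde\epsilon\ge v$ on the slit (where $\omega=1$), so the maximum principle for the subharmonic $v$ gives the required inequality, provided the growth as $\Re z\to+\infty$ is controlled. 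Since $v\le\log M$ is bounded above and $\omega\to0$ at infinity, a Phragm\'en--Lindel\"of argument in the strip legitimizes this comparison. The whole problem thus reduces to a lower bound for $\omega(z)$ at real points $z>\tilde A_1$.

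To estimate $\omega(z)$ from below and to produce the rate $e^{-\pi z/(2d)}$, I would use the conformal map $\zeta=e^{\pi z/(2d)}$, which sends the strip $\{|\Im z|<d\}$ onto the right half-plane $\{\Re\zeta>0\}$, carries the real axis to the positive real axis, maps $I$ to the interior segment $[e^{\pi\tilde A/(2d)},e^{\pi\tilde A_1/(2d)}]$, and maps the evaluation point to $e^{\pi z/(2d)}$. Harmonic measure is conformally invariant, so it remains to bound below the harmonic measure of a bounded real segment, viewed as a two-sided slit in the right half-plane, at a far real point $t=e^{\pi z/(2d)}$. This can be done explicitly with the half-plane Green's function $G(\zeta,t)=\log|(\zeta+t)/(\zeta-t)|$ and the flux of its normal derivative across the slit; since positive harmonic functions in the right half-plane vanishing on the imaginary axis and at infinity decay like $\Re\zeta/|\zeta|^2$, the harmonic measure behaves like $c/t$ at the far real point, i.e. like $c\,e^{-\pi z/(2d)}$. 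Tracking the dependence on the segment length, on the width $2d$, and on $a=\tilde A_1-\tilde A$ then yields the explicit prefactor $\tfrac{64ad}{3\pi^2(a^2+4d^2)}$ and the stated form $e^{\frac{\pi}{2d}(\frac a2-z)}$.

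The main obstacle is this last step: lower-bounding the harmonic measure of an \emph{interior} slit (rather than a boundary arc) with the precise constant. One must handle the logarithmic behavior of the harmonic measure near the slit tips, justify the flux computation for the two-sided slit, and verify the contribution at infinity so that the maximum principle in the unbounded slit domain is valid. An alternative that sidesteps the slit entirely is to construct a positive harmonic barrier modeled on the principal strip mode $e^{-\pi\Re z/(2d)}\cos(\pi\Im z/(2d))$ and to compare $v$ with it directly; in either route the delicate part is arranging the estimate so that the constant emerges exactly as claimed.
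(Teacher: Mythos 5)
The paper does not actually prove Lemma \ref{ac}; it is quoted verbatim from \cite[Lemma A.1]{LZZ}, so the only ``proof'' present here is a citation. Your overall strategy --- apply the two-constants theorem to the subharmonic function $\log|p|$ on the slit domain $D\setminus I$, $I=(\tilde A,\tilde A_1]$, and then lower-bound the harmonic measure $\omega(z,I,D\setminus I)$ by conformal mapping --- is the standard route for results of this type and is certainly the spirit of the cited source. The reduction $|p(z)|\le M^{1-\omega(z)}\tilde\epsilon^{\omega(z)}$ and the need for a Phragm\'en--Lindel\"of argument at infinity are correctly identified.

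Nevertheless the proposal has a genuine gap, and it sits exactly where the content of the lemma is. The entire point of the statement is the explicit lower bound $\eta(z)\ge \frac{64ad}{3\pi^2(a^2+4d^2)}e^{\frac{\pi}{2d}(\frac a2-z)}$, and this is never established: you yourself label it ``the main obstacle,'' offer only the heuristic that harmonic measure in a half-plane decays like $c/t$, and do not carry out the flux computation, control the logarithmic singularities at the slit tips, or extract the stated prefactor. A second, more structural problem is that your conformal reduction points the wrong way. The map $\zeta=e^{\pi z/(2d)}$ sends the \emph{full} strip $\{|\Im z|<d\}$ onto the right half-plane, but $D$ is only the half-strip $\Re z>\tilde A$, and $p$ is neither defined nor bounded by $M$ to the left of the edge $\Re z=\tilde A$. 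Discarding that edge enlarges the domain, and by the maximum principle the harmonic measure of a fixed set \emph{increases} under domain enlargement; hence a lower bound for the harmonic measure of the slit computed in the half-plane does not imply the needed lower bound in $D\setminus I$. This edge is not negligible here, since the slit $I$ emanates from it. The standard fix is to use $w=e^{-\pi(z-\tilde A)/(2d)}$, which maps the half-strip onto the half-disk $\{|w|<1,\ \Re w>0\}$, sends $I$ to the radial slit $[e^{-\pi a/(2d)},1)$ and the evaluation point to $w_0=e^{-\pi(z-\tilde A)/(2d)}$, and then to bound the harmonic measure of that radial slit at $w_0$, which decays linearly in $w_0$ and yields the rate $e^{-\pi z/(2d)}$ together with, after bookkeeping, the stated constant. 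Finally, your normalization step is not innocuous: if $M<1$ then $M^{1-\eta}\tilde\epsilon^{\eta}>M\tilde\epsilon^{\eta}$, so replacing $M$ by $\max(M,1)$ proves a conclusion strictly weaker than the one claimed.
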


The following lemma gives an estimate for the normal derivatives of the eigenfunctions on $\partial B_R$ and a Weyl-type inequality for the Dirichlet eigenvalues.

\begin{lemma}\label{eigenfunction_est1}
The following estimate holds:
\begin{align}\label{boundary_estimate_2}
\|\partial_\nu \phi_j\|_{L^2(\partial B_R)}\leq C\kappa^2_j,\quad \|\partial_\nu (\Delta\phi_j)\|_{L^2(\partial B_R)}\leq C\kappa^4_j,
\end{align}
where the positive constant $C$ is independent of $j$. Moreover, the following Weyl-type inequality holds for the Dirichlet eigenvalues $\{\mu_n\}_{n=1}^\infty$:
\begin{align}\label{weyl_1}
E_1 n^{4/3}\leq \mu_n\leq E_2 n^{4/3},
\end{align}
where $E_1$ and $E_2$ are two positive constants independent of $n$.
\end{lemma}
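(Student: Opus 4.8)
The plan is to separate the two assertions---the boundary normal-derivative bounds \eqref{boundary_estimate_2} and the Weyl asymptotics \eqref{weyl_1}---and to reduce the fourth-order Navier eigenvalue problem to a pair of second-order Dirichlet problems. The key structural observation is that under the hinged (Navier) conditions $\phi_j=\Delta\phi_j=0$ on $\partial B_R$, the auxiliary function $w_j:=-\Delta\phi_j$ also satisfies a homogeneous Dirichlet condition. Writing $\mu_j=\kappa_j^4$, the equation $(\Delta^2+V)\phi_j=\mu_j\phi_j$ is then equivalent to the coupled system
\begin{align*}
-\Delta\phi_j &= w_j \ \text{ in } B_R, \quad \phi_j=0 \ \text{ on } \partial B_R,\\
-\Delta w_j &= (\mu_j-V)\phi_j \ \text{ in } B_R, \quad w_j=0 \ \text{ on } \partial B_R.
\end{align*}
This is exactly the configuration in which the Hassell--Tao-type control of boundary normal derivatives for the Dirichlet Laplacian can be applied, now twice.

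First I would record the interior energy bounds. Multiplying the equation by $\bar\phi_j$ and integrating by parts, the boundary conditions kill every boundary contribution and yield $\int_{B_R}|\Delta\phi_j|^2+\int_{B_R}V|\phi_j|^2=\mu_j$; since $V\ge0$ and $\|\phi_j\|_{L^2(B_R)}=1$, this gives $\|w_j\|_{L^2(B_R)}\le\kappa_j^2$. Testing $-\Delta\phi_j=w_j$ against $\phi_j$ gives $\|\nabla\phi_j\|_{L^2(B_R)}^2\le\|w_j\|_{L^2(B_R)}$, and testing $-\Delta w_j=(\mu_j-V)\phi_j$ against $w_j$ gives $\|\nabla w_j\|_{L^2(B_R)}^2\le\|(\mu_j-V)\phi_j\|_{L^2(B_R)}\|w_j\|_{L^2(B_R)}$, where $\|(\mu_j-V)\phi_j\|_{L^2(B_R)}\le\mu_j+\|V\|_{L^\infty}\lesssim\kappa_j^4$.

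Next I would invoke the Rellich--Pohozaev identity with the radial multiplier $x\cdot\nabla u$. For any $u$ solving $-\Delta u=g$ in $B_R$ with $u=0$ on $\partial B_R$, integration by parts, together with the facts that $\nabla u=(\partial_\nu u)\nu$ on $\partial B_R$ (the tangential gradient vanishes because $u\equiv0$ there) and $x\cdot\nu=R$ on $\partial B_R$, gives in dimension three
\[
\frac{R}{2}\int_{\partial B_R}|\partial_\nu u|^2\,{\rm d}s+\frac12\int_{B_R}|\nabla u|^2\,{\rm d}x=-\int_{B_R}g\,(x\cdot\nabla u)\,{\rm d}x\le R\,\|g\|_{L^2(B_R)}\|\nabla u\|_{L^2(B_R)},
\]
so that $\|\partial_\nu u\|_{L^2(\partial B_R)}^2\le 2\|g\|_{L^2(B_R)}\|\nabla u\|_{L^2(B_R)}$. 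Applying this with $(u,g)=(\phi_j,w_j)$ and then with $(u,g)=(w_j,(\mu_j-V)\phi_j)$, and inserting the energy bounds above, yields $\|\partial_\nu\phi_j\|_{L^2(\partial B_R)}\lesssim\kappa_j^{3/2}$ and $\|\partial_\nu(\Delta\phi_j)\|_{L^2(\partial B_R)}\lesssim\kappa_j^{7/2}$; these are in fact stronger than, and hence imply, the claimed \eqref{boundary_estimate_2}, while the finitely many low eigenvalues are absorbed into the constant $C$, which is thereby independent of $j$. This boundary step is the heart of the matter and the main obstacle: getting the Rellich identity, its sign, and its boundary terms right is where the Navier decoupling via $w_j$ is essential.

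For the Weyl asymptotics \eqref{weyl_1} I would use the variational (Courant--Fischer) characterization. The Navier realization of $H=\Delta^2+V$ is the self-adjoint operator associated with the form $Q(u)=\int_{B_R}|\Delta u|^2+\int_{B_R}V|u|^2$ on the form domain $H^2(B_R)\cap H^1_0(B_R)$ (the condition $u=0$ being essential and $\Delta u=0$ natural), so that
\[
\mu_n=\min_{\dim E=n}\ \max_{0\ne u\in E}\frac{\int_{B_R}|\Delta u|^2+\int_{B_R}V|u|^2}{\int_{B_R}|u|^2}.
\]
Since $0\le V\le\|V\|_{L^\infty}$, each Rayleigh quotient lies between the unperturbed one and itself plus $\|V\|_{L^\infty}$, whence $\lambda_n\le\mu_n\le\lambda_n+\|V\|_{L^\infty}$, where $\{\lambda_n\}$ are the Navier eigenvalues of $\Delta^2$. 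These satisfy $\lambda_n=\sigma_n^2$ with $\sigma_n$ the Dirichlet eigenvalues of $-\Delta$, and by Lemma \ref{eigenfunction_est2} (equivalently the classical Weyl law in $\mathbb R^3$) obey $E_1 n^{4/3}\le\lambda_n\le E_2 n^{4/3}$. As $\|V\|_{L^\infty}$ is a fixed constant and $\lambda_n\to\infty$, the two-sided bound \eqref{weyl_1} follows, possibly after adjusting $E_1,E_2$.
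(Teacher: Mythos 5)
Your proof is correct, and for the boundary estimates it takes a genuinely different route from the paper. The paper follows the Hassell--Tao commutator strategy directly at the fourth-order level: with $A=x\cdot\nabla+\tfrac32$ and $B=A\Delta$ it computes $\int_{B_R}u[H,B]u\,{\rm d}x$ in two ways, extracting both $\|\partial_\nu u\|^2_{L^2(\partial B_R)}\lesssim\mu$ and $\|\partial_\nu(\Delta u)\|^2_{L^2(\partial B_R)}\lesssim\mu^2$ from a single identity, at the cost of tracking the commutator $[V,A\Delta]$ and several boundary integrations by parts for the bi-Laplacian. You instead exploit the Navier conditions to decouple the problem into the second-order Dirichlet system for $(\phi_j,w_j)$ with $w_j=-\Delta\phi_j$, and apply the classical Rellich--Pohozaev identity twice; combined with the elementary energy bounds $\|w_j\|_{L^2}\le\kappa_j^2$, $\|\nabla\phi_j\|^2_{L^2}\le\|w_j\|_{L^2}$, $\|\nabla w_j\|^2_{L^2}\lesssim\kappa_j^6$, this yields $\|\partial_\nu\phi_j\|_{L^2(\partial B_R)}\lesssim\kappa_j^{3/2}$ and $\|\partial_\nu(\Delta\phi_j)\|_{L^2(\partial B_R)}\lesssim\kappa_j^{7/2}$, which are strictly sharper than \eqref{boundary_estimate_2} (and uniform in $j$ since $\kappa_j\ge\kappa_1>0$). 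Your argument is more elementary (no commutator calculus, no regularity bookkeeping for $Bu$) but is tied to the hinged boundary conditions and to the multiplier $x\cdot\nabla$ on a ball, whereas the commutator formulation is the one that generalizes in the spirit of Hassell--Tao. For the Weyl bound \eqref{weyl_1} your argument is essentially the paper's min--max comparison with the unperturbed operator; your additive comparison $\lambda_n\le\mu_n\le\lambda_n+\|V\|_{L^\infty}$ is in fact cleaner than the paper's multiplicative one, and your identification of the form domain as $H^2(B_R)\cap H^1_0(B_R)$ (rather than $H^2_0(B_R)$, as the paper writes) is the correct one for the Navier realization.
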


\begin{proof}
We begin with the estimate \eqref{boundary_estimate_2} for the eigenfunctions on the boundary. Let $u$ be an eigenfunction  with eigenvalue $\mu$ such that
\begin{align*}
\begin{cases}
H u = \mu u,\quad &x\in B_R,\\
u = \Delta u = 0, \quad &x\in\partial B_R.
\end{cases}
\end{align*}
Define a differential operator 
\[
A = \frac{1}{2}(x\cdot \nabla+ \nabla\cdot x) = x\cdot\nabla + \frac{3}{2} = |x|\partial_\nu + \frac{3}{2}.
\]
Denote the commutator of two differential operators by $[\cdot, \, \cdot]$ such that $[O_1, O_2] = O_1 O_2 - O_2 O_1$ for two differential operators $O_1$ and $O_2$. Then we have  
\begin{align}\label{com}
[\Delta^k, A] = 2k\Delta^k, \quad k\in\mathbb N^+.
\end{align}
Denote $B = A\Delta$. A simple calculation gives
\begin{align*}
&\int_{B_R} u [H, B] u {\rm d}x
= \int_{B_R} \left(u (\Delta^2 + V) (Bu) - u B (\Delta^2 + V) u \right){\rm d}x\\
&= \int_{B_R} (\Delta^2 u + Vu - \mu u) Bu {\rm d}x+ \int_{\partial B_R}\left( u\partial_\nu(\Delta Bu) - \partial_\nu u\Delta(Bu) \right) {\rm d}s\\
&\quad + \int_{\partial B_R}\left( \Delta u\partial_\nu(Bu) - \partial_\nu (\Delta u)Bu \right) {\rm d}s\\
&= - \int_{\partial B_R} \left(\partial_\nu u\Delta(Bu) + \partial_\nu (\Delta u)Bu\right) {\rm d}s\\
&= - \int_{\partial B_R}\left( \partial_\nu u\Delta(Bu) + R|\partial_\nu (\Delta u)|^2\right){\rm d}s,
\end{align*}
where we have used $u = \Delta u = 0$ on $\partial B_R$ and Green's formula. Since by \eqref{com} we have 
\[
\Delta Bu = \Delta A\Delta = (A\Delta + 2\Delta)\Delta = A\Delta^2 + 2\Delta^2,
\]
It holds that 
\begin{align*}
&\int_{\partial B_R} \partial_\nu u\Delta(Bu){\rm d}s = \int_{\partial B_R} \partial_\nu u (A\Delta^2 + 2\Delta^2)u {\rm d}s \\
&=  \int_{\partial B_R} \Big(\partial_\nu u \big(\big(R\partial_\nu + 
\frac{3}{2}\big)\Delta^2 u\big) + 2\Delta^2u \Big){\rm d}s\\
&= R\int_{\partial B_R} \partial_\nu u \,\partial_\nu(\Delta^2 u){\rm d}s
= R\int_{\partial B_R} \partial_\nu u \,\partial_\nu(\mu u - Vu){\rm d}s,
\end{align*}
where we have used $\Delta^2u = -Vu + \mu u = 0$ and $u = 0$ on $\partial B_R$. Hence we have
\begin{align}\label{I}
 \Big|\int_{\partial B_R} \partial_\nu u\Delta(Bu){\rm d}s\Big| \geq (\mu - \|V\|_{L^\infty(B_R)}) \int_{\partial B_R} |\partial_\nu u|^2{\rm d}s.
\end{align}
On the other hand we have 
\begin{align}\label{II}
\int_{\partial B_R} \partial_\nu (\Delta u)Bu {\rm d}s= \int_{\partial B_R} \partial_\nu (\Delta u)Bu {\rm d}s = R \int_{\partial B_R} |\partial_\nu (\Delta u)|^2 {\rm d}s.
\end{align}
Moreover, since by \eqref{com} it holds that $[H, B] = 4\Delta^3 + [V, A\Delta]$, we obtain 
\begin{align}\label{III}
&\Big|\int_{B_R} u [H, B] u {\rm d}x\Big|= \Big|\int_{B_R} \left(4u\Delta^3u + [V, A\Delta]u\right) {\rm d}x\Big|\notag\\
&= \Big|\int_{B_R}\left( 4u\Delta (-Vu + \mu u) + [V, A\Delta]u \right){\rm d}x\Big|\notag\\
&\leq C\mu \|u\|^2_{H^2(B_R)}\leq C\mu^2.
\end{align}
Here we have used the fact that the commutator $[V, A\Delta]$ has order of 2 at most. Using \eqref{I}--\eqref{III} we obtain 
\[
\|\partial_\nu u\|^2_{L^2(\partial B_R)}\leq\mu,\quad \|\partial_\nu (\Delta u)\|^2_{L^2(\partial B_R)}\leq\mu^2,
\]
which completes the proof of \eqref{boundary_estimate_2}.

Next, we prove the Weyl-type inequality \eqref{weyl_1}. Assume $\mu_1<\mu_2<\cdots$ are the eigenvalues of the
operator $H$. Then we have following min-max principle: 
\[
\mu_n=\max_{\phi_1,\cdots,\phi_{n-1}}\min_{\psi\in[\phi_1,\cdots,
\phi_{n-1}]^\perp\atop \psi\in H_0^2(B_R)}\frac{\int_{B_R} |\Delta
\psi|^2 + V|\psi|^2{\rm d}x}{\int_{B_R}\psi^2{\rm d}x}.
\]
Assume that $\mu_1^{(1)}<\mu_2^{(1)}<\cdots$ are the eigenvalues for the operator
$\Delta^2$. By the min-max principle, we have
\[
C_1\mu_n^{(1)}<\mu_n<C_2\mu_n^{(1)}, \quad n=1, 2, \dots,
\]
where $C_1$ and $C_2$ are two positive constants depending on $\|V\|_{L^\infty(B_R)}$. We have from Weyl's law \cite{Weyl} for $\Delta^2$ that 
\[
\lim_{n\rightarrow+\infty}\frac{\mu_n^{(1)}}{n^{4/3}}=D,
\]
where $D$ is a constant. Therefore there exist two constants $E_1$ and $E_2$
such that 
\[
E_1 n^{4/3}\leq \mu_n\leq E_2 n^{4/3},
\]
which completes the proof.
\end{proof}

Denote $\kappa_j^4 = \mu_j$. Let $u(x, \kappa_j)$ be the solution to \eqref{main_eq}--\eqref{src} with $\kappa = \kappa_j$. In general, due to the presence of the potential function  $V(x)$, the resolvent $R_V(\kappa)$ may have poles on $\mathbb R^+$ which are restricted in the set $\{x: 0<x<C_0\}$ by Theorem \ref{bound_2}. However, a recent result \cite{Yao} shows that for a certain class of nonnegative potential functions the resolvent has no poles on $\mathbb R^+$. On the other hand, since the first eigenvalue $\mu_1$ of $H$ in $B_R$ increases as the radius $R$ decreases, if the supports of the source $f(x)$ and potential $V(x)$ are small, we may shrink the ball $B_R$ to make $\mu_1\geq C_0$. Thus, in the following study of the inverse problem, we assume that the resolvent $R_V(\kappa)$ has no poles in the set $\{\kappa_j: \kappa_j<C_0\}$.

\begin{lemma}\label{fj_1}
The following estimate holds:
\[
|f_j|^2 \lesssim \kappa_j^4 \|u(x,\kappa_j)\|^2_{L^2(\partial B_R)}  + \kappa_j^8 \|\Delta u(x,\kappa_j)\|^2_{L^2(\partial B_R)}
\]
for $j=1, 2, 3, \dots$.
\end{lemma}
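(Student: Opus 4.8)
The plan is to imitate the proof of Lemma \ref{fj} from the unperturbed case, replacing $H_0=\Delta^2$ by $H=\Delta^2+V$ and keeping track of the extra terms generated by the potential. First I would multiply both sides of \eqref{main_eq} by $\bar\phi_j$ and integrate over $B_R$. On the left this yields $\int_{B_R} f\bar\phi_j\,{\rm d}x = f_j$ by the definition of $f_j$. For the term $\int_{B_R}(\Delta^2 u)\bar\phi_j\,{\rm d}x$ I would apply the biharmonic Green's identity twice (integration by parts), producing boundary integrals on $\partial B_R$ involving $u,\partial_\nu u,\Delta u,\partial_\nu(\Delta u)$ and their counterparts for $\bar\phi_j$, plus the interior term $\int_{B_R} u\,\Delta^2\bar\phi_j\,{\rm d}x$. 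The remaining terms $\int_{B_R}(V u)\bar\phi_j\,{\rm d}x$ and $-\kappa_j^4\int_{B_R}u\bar\phi_j\,{\rm d}x$ must be combined with this interior term using the eigenvalue equation.

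The key simplification comes from the eigenfunction relations. Since $\phi_j$ solves $(\Delta^2+V)\phi_j=\mu_j\phi_j=\kappa_j^4\phi_j$, the interior contributions collapse:
\begin{align*}
\int_{B_R} u\,\Delta^2\bar\phi_j\,{\rm d}x + \int_{B_R} Vu\,\bar\phi_j\,{\rm d}x - \kappa_j^4\int_{B_R} u\,\bar\phi_j\,{\rm d}x = \int_{B_R} u\,(\Delta^2\bar\phi_j + V\bar\phi_j - \kappa_j^4\bar\phi_j)\,{\rm d}x = 0,
\end{align*}
so that $f_j$ equals a pure boundary integral. Next I would impose the Navier boundary conditions $\bar\phi_j=\Delta\bar\phi_j=0$ on $\partial B_R$, which annihilate every boundary term containing $\bar\phi_j$ or $\Delta\bar\phi_j$ as a factor. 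What survives is exactly the two terms weighted by the normal derivatives of $\bar\phi_j$ and $\Delta\bar\phi_j$, namely a combination of $\int_{\partial B_R}\Delta u\,\partial_\nu\bar\phi_j\,{\rm d}s$ and $\int_{\partial B_R} u\,\partial_\nu(\Delta\bar\phi_j)\,{\rm d}s$. Unlike the unperturbed case, there is no longer a clean relation $\partial_\nu(\Delta\bar\phi_j)=-\kappa_j^2\partial_\nu\bar\phi_j$, so both surviving boundary integrals must be retained and estimated separately.

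Finally I would apply the Cauchy--Schwarz inequality to each boundary integral and insert the eigenfunction derivative bounds from Lemma \ref{eigenfunction_est1}, namely $\|\partial_\nu\phi_j\|_{L^2(\partial B_R)}\lesssim\kappa_j^2$ and $\|\partial_\nu(\Delta\phi_j)\|_{L^2(\partial B_R)}\lesssim\kappa_j^4$. The term involving $\Delta u\,\partial_\nu\bar\phi_j$ then contributes $\kappa_j^2\|\Delta u\|_{L^2(\partial B_R)}$, which after squaring gives the factor $\kappa_j^4\|\Delta u\|^2_{L^2(\partial B_R)}$; the term involving $u\,\partial_\nu(\Delta\bar\phi_j)$ contributes $\kappa_j^4\|u\|_{L^2(\partial B_R)}$, giving $\kappa_j^8\|u\|^2_{L^2(\partial B_R)}$ after squaring. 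Matching these against the claimed bound, I expect the main subtlety to be bookkeeping: correctly identifying which of the four boundary integrals in the biharmonic Green's identity vanish under the Navier conditions and pairing the two stronger derivative estimates of Lemma \ref{eigenfunction_est1} with the correct traces of $u$ and $\Delta u$, so that the powers of $\kappa_j$ land as $\kappa_j^4$ on the $\Delta u$ term and $\kappa_j^8$ on the $u$ term as stated. (I note that, with the derivative bounds as written, the estimate that falls out most directly weights $\|u\|^2$ by $\kappa_j^8$ and $\|\Delta u\|^2$ by $\kappa_j^4$, so I would verify the precise matching of weights to traces against the normalization conventions used above.)
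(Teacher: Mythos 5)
Your proposal follows the paper's proof exactly: multiply \eqref{main_eq} by $\bar\phi_j$, integrate by parts, use the eigenvalue equation to cancel the interior terms and the Navier conditions $\bar\phi_j=\Delta\bar\phi_j=0$ to kill two of the four boundary integrals, leaving $f_j=-\int_{\partial B_R}\bigl(\Delta u\,\partial_\nu\bar\phi_j+u\,\partial_\nu(\Delta\bar\phi_j)\bigr)\,{\rm d}s$, and then apply Cauchy--Schwarz together with Lemma \ref{eigenfunction_est1}. Your closing caveat is also well taken: the argument genuinely yields $|f_j|^2\lesssim\kappa_j^4\|\Delta u\|^2_{L^2(\partial B_R)}+\kappa_j^8\|u\|^2_{L^2(\partial B_R)}$, i.e.\ with the weights attached the opposite way from the lemma as stated, and this mismatch is an inconsistency in the paper (propagated into the later definitions of $\tilde\epsilon$ and $\tilde\epsilon_1$) rather than a defect in your reasoning.
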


\begin{proof}
Multiplying both sides of \eqref{main_eq} by $\bar{\phi}_j$ and using
the integration by parts yield
\begin{align*}
\int_{B_R} f(x)\bar{\phi}_j(x) {\rm d}x &= \int_{\partial B_R}\left(\partial_\nu (\Delta u(x, \kappa_j)) \bar{\phi}_j - \Delta u(x, \kappa_j)\partial_\nu \bar{\phi}_j\right){\rm d}s\\
 &\quad + \int_{\partial B_R}\left( \partial_\nu u(x, \kappa_j) \Delta \bar{\phi}_j - u(x, \kappa_j)\partial_\nu (\Delta \bar{\phi}_j)\right){\rm d}s.
\end{align*}
Moreover, noting that $\Delta \bar{\phi}_j =  \bar{\phi}_j = 0$ we arrive at 
\begin{align*}
\int_{B_R} f(x)\bar{\phi}_j(x) {\rm d}x = -\int_{\partial B_R} \left( \Delta u(x, \kappa_j)\partial_\nu \bar{\phi}_j
 + u(x, \kappa_j)\partial_\nu (\Delta \bar{\phi}_j)\right){\rm d}s.
\end{align*}
The proof is completed by using Lemma \ref{eigenfunction_est1} and the Schwartz inequality.
\end{proof}

\begin{lemma}\label{ac_2}
Let $f$ be a real-valued function and $\|f\|_{L^2(B_R)}\leq Q$. There exist positive constant $d$
and positive constants $\tilde{A},\tilde{A}_1$ satisfying $C_0< \tilde{A}<\tilde{A}_1$, which do not depend on $f$ and $Q$, such that 
\[
\kappa^4\|u(x,\kappa)\|^2_{L^2(\partial B_R)} + \kappa^8\|\Delta u(x,\kappa)\|^2_{L^2(\partial B_R)}\lesssim
Q^2e^{6R\kappa}\tilde{\epsilon}_1^{2\eta(\kappa)} \quad \forall\, \kappa\in (\tilde{A}_1, +\infty).
\]
Here $C_0$ is specified in Theorem \ref{bound_2} and
\begin{align*}
&\tilde{\epsilon}^2_1 :=\kappa^4\|u(x,\kappa)\|^2_{L^2(\partial B_R)} + \kappa^8\|\Delta u(x,\kappa)\|^2_{L^2(\partial B_R)},\\ 
&\eta(\kappa) \geq \frac{64ad}{3\pi^2(a^2 + 4d^2)} e^{\frac{\pi}{2d}(\frac{a}{2} -
\kappa)}.
\end{align*}
Here $a = \tilde{A}_1 - \tilde{A}.$
\end{lemma}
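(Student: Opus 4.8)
The plan is to mirror the proof of Lemma \ref{ac_1}, replacing the sectorial analytic continuation (Lemma \ref{AC1}) by the slab version (Lemma \ref{ac}), and the free resolvent bound by the resolvent estimate of Theorem \ref{bound_2}. First I would introduce the auxiliary function
\[
p(\kappa):=\int_{\partial B_R}\Big(\kappa^4 u(x,\kappa)u(x,{\rm i}\kappa) + \kappa^8 \Delta u(x,\kappa)\Delta u(x,{\rm i}\kappa)\Big){\rm d}s, \quad \kappa\in\mathbb C.
\]
Since $f$ is real-valued, the relations $\overline{u(x,\kappa)}=u(x,{\rm i}\kappa)$ and $\overline{\Delta u(x,\kappa)}=\Delta u(x,{\rm i}\kappa)$ hold for $\kappa\in\mathbb R^+$, so that $p(\kappa)=\kappa^4\|u(x,\kappa)\|^2_{L^2(\partial B_R)}+\kappa^8\|\Delta u(x,\kappa)\|^2_{L^2(\partial B_R)}$ agrees with the target quantity on the positive real axis. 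The strategy is then to continue $p$ analytically from the interval $(\tilde A,\tilde A_1]$, where the data is small, out to $(\tilde A_1,+\infty)$.

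The decisive step is to set up a domain of analyticity. Writing $u(\cdot,\kappa)=R_V(\kappa)f$ and $u(\cdot,{\rm i}\kappa)=R_V({\rm i}\kappa)f$, the function $p$ is analytic wherever both $\kappa$ and ${\rm i}\kappa$ lie in the resonance-free region $\Omega_\delta$ of Theorem \ref{bound_2}. I would therefore fix $\tilde A>C_0$ and a sufficiently small $d\in(0,A)$ and take the infinite slab $D=\{z:\ \tilde A<\Re z,\ |\Im z|<d\}$. For $\kappa\in D$ one has $\Re\kappa>\tilde A>0$, $\Im\kappa\in(-d,d)$ and $|\kappa|\ge\tilde A>C_0$, so $\kappa\in\Omega_\delta$; rotating, $\Re({\rm i}\kappa)=-\Im\kappa\in(-d,d)\subset(-A,A)$ and $\Im({\rm i}\kappa)=\Re\kappa>0$, so ${\rm i}\kappa\in\Omega_\delta$ as well. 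Together with the standing assumption that $R_V$ has no poles on $(C_0,\infty)\subset(\tilde A,\infty)$, this guarantees that $p$ is analytic in $D$ and continuous on $\overline D$. Verifying that a genuine rectangular slab and its rotation by ${\rm i}$ both embed into the logarithmically bounded region $\Omega_\delta$ is the \emph{main obstacle} and the essential departure from the potential-free case, where the full sector $\Sigma$ was available.

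Next I would bound $|p|$ on $D$. On $D$ and on its rotation $\{{\rm i}\kappa:\kappa\in D\}$ the factors $e^{2R(\Re\lambda)_-}+e^{2R(\Im\lambda)_-}$ in \eqref{bound_3} are bounded by $2e^{2Rd}$, since the relevant real or imaginary parts are nonnegative up to an error of size $d$. Combining a trace inequality with \eqref{bound_3} (choosing $j=1$ for $u$ and $j=3$ for $\Delta u$, after cutting off with a $\rho$ equal to $1$ on ${\rm supp}\,f\cup{\rm supp}\,V$) gives $\|u(\cdot,\kappa)\|_{L^2(\partial B_R)}\lesssim|\kappa|^{-1}\|f\|_{L^2(B_R)}$ and $\|\Delta u(\cdot,\kappa)\|_{L^2(\partial B_R)}\lesssim|\kappa|\,\|f\|_{L^2(B_R)}$, together with the identical bounds at ${\rm i}\kappa$. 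An application of the Cauchy--Schwarz inequality then yields $|p(\kappa)|\lesssim|\kappa|^{10}\|f\|^2_{L^2(B_R)}\lesssim Q^2 e^{6R\Re\kappa}$ on $D$, whence $|e^{-6R\kappa}p(\kappa)|\lesssim Q^2$.

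Finally I would apply Lemma \ref{ac} to the analytic function $q(\kappa):=e^{-6R\kappa}p(\kappa)$, which satisfies $|q|\le\tilde\epsilon_1^2$ on $(\tilde A,\tilde A_1]$ (because $e^{-6R\kappa}<1$ there and $p$ equals the data) and $|q|\le M$ on $D$ with $M\lesssim Q^2$. The lemma produces a function $\eta(\kappa)$ with the stated lower bound such that $|q(\kappa)|\lesssim Q^2(\tilde\epsilon_1^2)^{\eta(\kappa)}=Q^2\tilde\epsilon_1^{2\eta(\kappa)}$ for $\kappa\in(\tilde A_1,+\infty)$. Multiplying back by $e^{6R\kappa}$ recovers $\kappa^4\|u(x,\kappa)\|^2_{L^2(\partial B_R)}+\kappa^8\|\Delta u(x,\kappa)\|^2_{L^2(\partial B_R)}\lesssim Q^2 e^{6R\kappa}\tilde\epsilon_1^{2\eta(\kappa)}$, which is the asserted estimate. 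The only remaining bookkeeping is to record that $\tilde A,\tilde A_1,d$ may be chosen to depend solely on $C_0,A,\delta,R$ and not on $f$ or $Q$.
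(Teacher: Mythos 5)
Your proposal is correct and follows essentially the same route as the paper: the same auxiliary function $I(\kappa)=\int_{\partial B_R}(\kappa^4 u(x,\kappa)u(x,{\rm i}\kappa)+\kappa^8\Delta u(x,\kappa)\Delta u(x,{\rm i}\kappa))\,{\rm d}s$, analyticity on a slab $\mathcal R=(\tilde A,+\infty)\times(-d,d)$ chosen so that both $\mathcal R$ and ${\rm i}\mathcal R$ sit inside $\Omega_\delta$, the bound $|e^{-6R\kappa}I(\kappa)|\lesssim Q^2$ from Theorem \ref{bound_2} plus trace estimates, and then Lemma \ref{ac}. Your explicit verification that the slab and its rotation embed into the logarithmic region $\Omega_\delta$ (requiring $d<A$) is in fact more detailed than the paper's, which simply asserts the existence of such $d$ and $\tilde A$.
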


\begin{proof}

Let 
\[
I(\kappa):=\int_{\partial
B_R}\left(\kappa^4 u(x,\kappa)u(x, {\rm i}\kappa) + \kappa^8 \Delta u(x,\kappa) \Delta u(x, {\rm i}\kappa)\right) {\rm d}s,\quad \kappa\in \mathbb C.
\]
By Theorem \ref{bound_2} we have that $u(x, \kappa)$ is analytic in the domain 
\begin{align*}
\Omega_\delta:=\Big\{\lambda: \Im\lambda\geq -A -\delta {\rm log}(1 + |\lambda|), \,\Re\lambda \geq -A - \delta {\rm log}(1 + |\lambda|), |\lambda|\geq |C_0|\Big\}.
\end{align*} 
Moreover, there exist $d>0$ and $\tilde{A}>C_0$ such
that $\mathcal{R} = (\tilde{A}, +\infty)\times (-d, d)\subset\Omega_\delta$ and  ${\rm i}\mathcal{R} =\{{\rm i}z: z\in\mathcal{R}\}\subset\Omega_\delta$. The geometry of the domain $\mathcal{R}$ is shown in Figure \ref{dR-1}.
\begin{figure}
\centering
\includegraphics[width=0.8\textwidth]{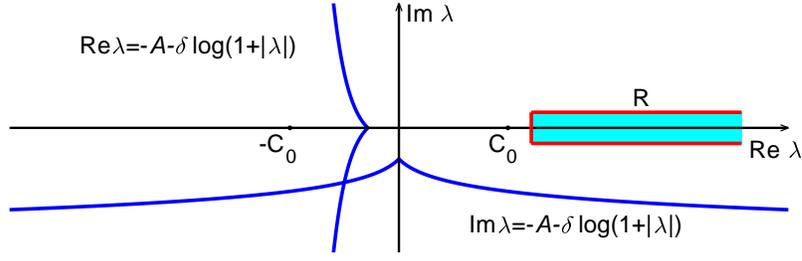}
\caption{The region $\mathcal{R}$.}
\label{dR-1}
\end{figure}
Therefore, $I(\kappa)$
is analytic in $\mathcal{R}$. Moreover, since $f(x)$ and $V(x)$ are both real-valued functions,  we have $\overline{u(x,\kappa)}=u(x,{\rm i}\kappa)$ and  $\overline{\Delta u(x,\kappa)}=\Delta u(x,{\rm i}\kappa)$ for $\kappa\geq C_0$. Hence we have
\[
I(\kappa)=\kappa^4\|u(x,\kappa)\|^2_{L^2(\partial B_R)} + \kappa^8\|\Delta u(x,\kappa)\|^2_{L^2(\partial B_R)}, \quad \kappa\geq C_0.
\]
By the estimate \eqref{bound_3} in Theorem \ref{bound_2} we have for $\kappa\in \mathcal{R} $ that 
\begin{align*}
&|\kappa|^2\|u(x,\kappa)\|_{L^2(\partial B_R)} + |\kappa|^4 \|\Delta u(x,\kappa)\|_{L^2(\partial B_R)}\\
&\leq |\kappa|^2\|u(x,\kappa)\|_{H^{1/2}(\partial B_R)} + |\kappa|^4 \|\Delta u(x,\kappa)\|_{H^{3/2}(\partial B_R)}\\
&\leq 2|\kappa|^4\|u\|_{H^4(\mathbb R^3)}\leq  e^{3R|\kappa|}\|f\|_{L^2(B_R)}.
\end{align*}
Since
\begin{align*}
|I(\kappa)|&\leq |\kappa|^2 \|u(x,\kappa)\|_{L^2(\partial B_R)} |\kappa|^2
\|u(x,-\kappa)\|_{L^2(\partial B_R)}  \\
&\quad + |\kappa|^4 \|\Delta u(x,\kappa)\|_{L^2(\partial B_R)} |\kappa|^4
\|\Delta u(x,-\kappa)\|_{L^2(\partial B_R)}\\
&\lesssim e^{6R|\kappa|}\|f\|^2_{L^2(B_R)},\quad\kappa\in \mathcal{R} ,
\end{align*}
we have 
\[
|e^{-6R|\kappa|}I(\kappa)|\lesssim Q^2,\quad\kappa\in \mathcal{R}.
\]
An application of Lemma \ref{ac} shows that there exists a function $\eta(\kappa)$ such that
\[
\big| e^{-6R\kappa} I(\kappa)\big| \lesssim
Q^2\tilde{\epsilon}_1^{2\eta(\kappa)}\quad \forall\, \kappa\in (\tilde{A}_1, +\infty),
\]
where 
\[
\eta(\kappa) \geq \frac{64ad}{3\pi^2(a^2 + 4d^2)} e^{\frac{\pi}{2d}(\frac{a}{2} - \kappa)},
\]
which completes the proof. 
\end{proof}

Here we state the uniqueness result for the inverse problem.

\begin{theorem}
Let $f\in L^2(B_R)$ and $I:=(C_0, C_0 + \zeta) \subset \mathbb R^+$ be an
open interval, where $C_0$ is the constant given  in the definition of
$\widetilde{\Omega}_\delta$ in Lemma \ref{ac_2} and $\zeta$ is any positive
constant. Then the source term $f$ can be uniquely determined by the
multifrequency data $\{u(x,\kappa): x\in \partial B_R, \kappa \in I\}\cup
\{u(x,\kappa_j): x\in\partial B_R, \kappa_j\in (0, C_0]\}$.
\end{theorem}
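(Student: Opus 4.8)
The plan is to mimic the uniqueness proof given earlier for the case without perturbation, now using the meromorphic continuation and resonance-free region established in Theorem \ref{bound_2}. Suppose $u(x,\kappa) = 0$ for all $x\in\partial B_R$ and all $\kappa \in I \cup \{\kappa_j : \kappa_j \in (0, C_0]\}$; the goal is to conclude $f \equiv 0$. Since we assume $R_V(\kappa)$ has no poles in $\{\kappa_j : \kappa_j < C_0\}$, the values $u(x,\kappa_j)$ are well-defined for these low eigenvalues, so vanishing of the data there is meaningful.

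First I would use the fact, guaranteed by Theorem \ref{bound_2}, that $u(x,\kappa)$ extends to a meromorphic function of $\kappa$ on the resonance-free region $\Omega_\delta$, and in particular is analytic on the open interval $I = (C_0, C_0 + \zeta)$ and on a neighborhood of it inside $\Omega_\delta$. Since the trace $u(x,\kappa)\vert_{\partial B_R}$ vanishes on the open interval $I$, analyticity in $\kappa$ forces $u(x,\kappa)\vert_{\partial B_R} = 0$ for all $\kappa$ in the connected component of $\Omega_\delta$ containing $I$; in particular it vanishes at every eigenvalue $\kappa_j$ with $\kappa_j \geq C_0$. Combined with the hypothesis that the data vanishes at the low eigenvalues $\kappa_j \in (0, C_0]$, we obtain $u(x,\kappa_j)\vert_{\partial B_R} = 0$ for every $j = 1, 2, 3, \dots$.

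Next I would recover $\Delta u$ on the boundary from $u$ on the boundary. The subtlety here, compared to Section \ref{without}, is that the uniqueness theorem only assumes knowledge of the Dirichlet trace $u(x,\kappa)$, not of $\Delta u(x,\kappa)$. To close the argument I would invoke Lemma \ref{fj_1}, which bounds $|f_j|^2$ in terms of both $\|u(x,\kappa_j)\|_{L^2(\partial B_R)}$ and $\|\Delta u(x,\kappa_j)\|_{L^2(\partial B_R)}$. To control the latter term from the vanishing Dirichlet data alone, I would argue that once $f$ is shown to be orthogonal to each eigenfunction this suffices, so it is cleaner to re-derive the representation underlying Lemma \ref{fj_1}: multiplying \eqref{main_eq} by $\bar\phi_j$ and integrating by parts expresses $f_j = \int_{B_R} f\bar\phi_j$ as a boundary integral. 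Under the Navier conditions $\Delta\bar\phi_j = \bar\phi_j = 0$ on $\partial B_R$, the representation reduces to boundary terms carrying $u$ and $\Delta u$; the genuinely needed input is that the full Cauchy data of $u$ on $\partial B_R$ vanishes.

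The main obstacle, therefore, is justifying that vanishing of the Dirichlet trace $u(x,\kappa_j)\vert_{\partial B_R}$ for all $j$ already forces $f_j = 0$, since Lemma \ref{fj_1} nominally also requires the vanishing of $\Delta u$ on the boundary. I expect the resolution to come from the elliptic structure: if $u(\cdot,\kappa_j)$ solves the exterior scattering problem \eqref{main_eq}--\eqref{src} with $\kappa = \kappa_j$ and its Dirichlet trace vanishes on $\partial B_R$ for the full complex family, then by the meromorphic continuation and the radiation condition the solution must vanish identically in the exterior, which by the jump relations propagates to the vanishing of the remaining Cauchy data $\Delta u$ and its normal derivative on $\partial B_R$ as well. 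Feeding this into the boundary representation for $f_j$ yields $f_j = 0$ for all $j$, and since $\{\phi_j\}$ is complete in $L^2(B_R)$ we conclude $f \equiv 0$, completing the uniqueness proof.
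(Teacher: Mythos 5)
Your overall strategy is the same as the paper's: use the analyticity of the data in $\kappa$ on the resonance-free region from Theorem \ref{bound_2} to propagate the vanishing of the boundary data from the interval $I$ to all eigenvalues $\kappa_j>C_0$, combine with the hypothesis at the low eigenvalues $\kappa_j\le C_0$, invoke Lemma \ref{fj_1} to get $f_j=0$ for every $j$, and conclude by completeness of $\{\phi_j\}$. You have also correctly spotted the real issue: Lemma \ref{fj_1} (equivalently, the Green's identity $\int_{B_R}f\bar\phi_j\,{\rm d}x=-\int_{\partial B_R}(\Delta u\,\partial_\nu\bar\phi_j+u\,\partial_\nu(\Delta\bar\phi_j))\,{\rm d}s$) needs \emph{both} traces $u$ and $\Delta u$ on $\partial B_R$, while the theorem as stated lists only $u$. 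The paper's own proof silently assumes $\Delta u$ also vanishes on $I$ ("Since $u(x,\kappa)$ and $\Delta u(x,\kappa)$ are both analytic \dots it holds that $u=\Delta u=0$"), and the companion stability theorem measures both $\|u\|_{L^2(\partial B_R)}$ and $\|\Delta u\|_{L^2(\partial B_R)}$, so the omission of $\Delta u$ from the data set is best read as a slip; with $\Delta u$ included the argument is exactly the paper's.

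The genuine gap is in your attempted repair. You claim that if the Dirichlet trace $u(\cdot,\kappa)\vert_{\partial B_R}$ vanishes then, by the radiation condition and "jump relations," $u$ must vanish in the exterior and hence so must $\Delta u\vert_{\partial B_R}$. This fails for the fourth-order operator: in $\mathbb R^3\setminus B_R$ one has $(\Delta^2-\kappa^4)u=0$, i.e. $(-\Delta-\kappa^2)(-\Delta+\kappa^2)u=0$, and the exterior problem requires \emph{two} boundary conditions, not one. Concretely, write $u=v+w$ with $(\Delta+\kappa^2)v=0$ outgoing and $(\Delta-\kappa^2)w=0$ decaying, as in the splitting \eqref{kernel} of the free resolvent kernel; the single condition $v+w=0$ on $\partial B_R$ admits nontrivial radiating solutions, so $u\vert_{\partial B_R}=0$ does not force $u\equiv 0$ outside $B_R$, and no jump relation recovers $\Delta u\vert_{\partial B_R}$ from the Dirichlet trace alone. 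The correct fix is simply to include $\Delta u(x,\kappa)\vert_{\partial B_R}$ in the measured data (as in the unperturbed uniqueness theorem of Section \ref{without}), after which your first step — analytic continuation of both traces in $\kappa$ — closes the proof.
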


\begin{proof}
Let $u(x,\kappa)=0$ for $x\in\partial B_R$ and $\kappa \in I \cup \{\kappa_j:
\kappa_j\in (0, C_0])\}$. It suffices to show that $f(x)=0$. Since $u(x,\kappa)$ and  $\Delta u(x,\kappa)$
are both analytic in $\widetilde{\Omega}_\delta$ for $x\in\partial B_R$, it holds that
$u(x,\kappa) = \Delta u(x,\kappa) = 0$ for all eigenvalues $\kappa>C_0$. Then we have that
$u(x,\kappa_j)=0$ for all $\kappa_j, j=1, 2, 3, \cdots$.
Hence, it follows from Lemma \ref{fj_1} that 
\[
\int_{B_R} f(x)\bar{\phi}_j(x) {\rm d}x = 0, \quad j=1, 2, 3, \cdots,
\]
which implies $f = 0$.
\end{proof}

The following lemma is important in the stability analysis.

\begin{lemma}\label{tail_2}
Let $f\in H^{n+1}(B_R)$ and $\|f\|_{H^{n+1}(B_R)}\leq Q$. It holds that 
\[
\sum_{j \geq s} |f_j|^2 \lesssim \frac{Q^2}{s^{\frac{2}{3}(n+1)}}.
\]
\end{lemma}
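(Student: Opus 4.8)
The plan is to expand $f$ in the orthonormal eigenbasis $\{\phi_j\}$ of $H$ and extract the decay from two ingredients: a spectral control of the Sobolev norm $\|f\|_{H^{n+1}(B_R)}$ by the weighted sum $\sum_j \mu_j^{(n+1)/2}|f_j|^2$, and the Weyl-type lower bound for $\mu_j$ recorded in Lemma \ref{eigenfunction_est1}. The decisive first move is the elementary monotonicity observation that, since the eigenvalues are increasing, $\mu_j\geq\mu_s$ for all $j\geq s$, so that
\[
\sum_{j\geq s}|f_j|^2 = \sum_{j\geq s}\mu_j^{-\frac{n+1}{2}}\,\mu_j^{\frac{n+1}{2}}|f_j|^2 \leq \mu_s^{-\frac{n+1}{2}}\sum_{j}\mu_j^{\frac{n+1}{2}}|f_j|^2 .
\]
This reduces the whole lemma to bounding the full weighted sum by $\|f\|_{H^{n+1}(B_R)}^2$ and then inserting the eigenvalue growth.

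The key technical step is the norm inequality $\sum_{j}\mu_j^{\frac{n+1}{2}}|f_j|^2 \lesssim \|f\|_{H^{n+1}(B_R)}^2$. I would obtain it by interpolation rather than by directly invoking fractional powers. At the endpoint $\alpha=0$ one has trivially $\sum_j|f_j|^2=\|f\|_{L^2(B_R)}^2$, while at each integer power $m$ the identity $\sum_j\mu_j^{2m}|f_j|^2=\|H^m f\|_{L^2(B_R)}^2$ together with the ellipticity of $H=\Delta^2+V$ gives $\sum_j\mu_j^{2m}|f_j|^2\lesssim\|f\|_{H^{4m}(B_R)}^2$. Choosing $m$ with $2m\geq\frac{n+1}{2}$ and interpolating between these two endpoints yields $\sum_{j}\mu_j^{\frac{n+1}{2}}|f_j|^2 \lesssim \|f\|_{H^{n+1}(B_R)}^2\leq Q^2$. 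Because every $f\in\mathcal C_Q$ has support compactly contained in $B_R$, all the Navier-type trace conditions built into the domains of the powers of $H$ are automatically satisfied, so $f$ lies in the relevant interpolation space and the boundary terms in $\|H^m f\|_{L^2}$ do not obstruct the estimate.

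Finally, I would feed in the Weyl-type bound $\mu_s\geq E_1 s^{4/3}$ from Lemma \ref{eigenfunction_est1}, which gives $\mu_s^{-\frac{n+1}{2}}\leq E_1^{-\frac{n+1}{2}}\,s^{-\frac{2}{3}(n+1)}$. Combining this with the monotonicity reduction and the norm inequality produces
\[
\sum_{j\geq s}|f_j|^2 \lesssim \mu_s^{-\frac{n+1}{2}}\,Q^2 \lesssim \frac{Q^2}{s^{\frac{2}{3}(n+1)}},
\]
which is exactly the claimed estimate. I expect the only real obstacle to be the norm equivalence, specifically the interpolation step and the verification that the compact support of $f$ places it in the correct space with the Navier conditions so that the endpoint estimate $\|H^m f\|_{L^2}\lesssim\|f\|_{H^{4m}}$ passes to the fractional exponent $\frac{n+1}{2}$; the monotonicity reduction and the insertion of the Weyl law are routine bookkeeping.
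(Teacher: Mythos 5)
Your proposal is correct and follows essentially the same route as the paper: the monotonicity reduction $\sum_{j\geq s}|f_j|^2\leq \mu_s^{-\frac{n+1}{2}}\sum_j\mu_j^{\frac{n+1}{2}}|f_j|^2$, the spectral characterization $\sum_j\mu_j^{\frac{n+1}{2}}|f_j|^2\lesssim\|f\|_{H^{n+1}(B_R)}^2$ (which the paper simply asserts as the norm equivalence $\|f\|^2_{H^{n+1}}\cong\sum_j(\kappa_j^2+1)^{n+1}|f_j|^2$ with $\kappa_j^4=\mu_j$), and the Weyl lower bound $\mu_s\gtrsim s^{4/3}$. Your interpolation justification of the norm inequality is a more careful rendering of a step the paper takes for granted, not a different argument.
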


\begin{proof}
A simple calculation yields 
\[
\sum_{j \geq s} |f_j|^2 \leq \sum_{j \geq s}
\frac{\kappa_j^{2n+2}}{\kappa_s^{2n+2}} |f_j|^2 \leq
\frac{1}{\kappa_s^{2n+2}}\sum_{j \geq s} \kappa_j^{2n+2} |f_j|^2  \lesssim
\frac{Q^2}{\kappa_s^{2n+2}}.
\]
Noting 
\[
\|f\|^2_{H^s(B_R)} \cong\sum_{j=1}^\infty (\kappa_j^{2}+1)^s |f_j|^2, 
\]
and using the Weyl-type inequality in Lemma \ref{eigenfunction_est1}, we have $\kappa_s^2 \geq
E_2 s^{\frac{2}{3}}$ and complete the proof. 
\end{proof}

Define a real-valued function space
\[
\mathcal C_Q = \{f \in H^{n+1}(B_R):  \|f\|_{H^{n+1}(B_R)}\leq Q, ~ \text{supp}
f\subset B_R, ~ f: B_R \rightarrow \mathbb R \}.
\]
Now we are in the position to discuss the inverse source problem for the biharmonic operator with a general potential. Let $f\in \mathcal C_Q$. The inverse source problem is to determine $f$ from the boundary
data $u(x,\kappa)$, $x\in\partial B_R$, $\kappa \in (\tilde{A},\tilde{A}_1) \cup \cup_{j=1}^N
\kappa_j$, where $1\leq N \in \mathbb N$ and $\kappa_N>\tilde{A}_1$, $\tilde{A}$ and $\tilde{A}_1$
are the constants specified in Lemma \ref{ac_2}.

The following stability estimate is the main result of this paper.

\begin{theorem}
Let $u(x,\kappa)$ be the solution of the scattering problem \eqref{main_eq}--\eqref{src} corresponding to the source $f\in \mathcal C_Q$. Then for  sufficiently small $\tilde{\epsilon}_1$, the following estimate holds: 
\begin{align}\label{stability}
\|f\|_{L^2( B_R)}^2  \lesssim \tilde{\epsilon}^2 +
\frac{Q^2}{N^{\frac{1}{3}(n+1)}(\ln|\ln\tilde{\epsilon}_1|)^{\frac{1}{3}(n+1)}},
\end{align}
where
\begin{align*}
\tilde{\epsilon}^2 &= \sum_{j=1}^N \kappa_j^4 \|u(x,\kappa_j)\|^2_{L^2(\partial
B_R)} + \kappa_j^8 \|\Delta u(x,\kappa_j)\|^2_{L^2(\partial
B_R)},\\
\tilde{\epsilon}^2_1 &= {\rm sup}_{\kappa \in (\tilde{A},\tilde{A}_1)} \kappa^4 \|u(x,\kappa)\|^2_{L^2(\partial
B_R)} + \kappa^8 \|\Delta u(x,\kappa_j)\|^2_{L^2(\partial
B_R)}.
\end{align*}
\end{theorem}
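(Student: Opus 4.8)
The plan is to reproduce the three-part decomposition of the energy that was used in the unperturbed case, but driven by the perturbed lemmas and, crucially, by the far weaker analytic-continuation estimate of Lemma \ref{ac_2}. Starting from the energy identity $\|f\|_{L^2(B_R)}^2=\sum_j|f_j|^2$, I would fix two indices $N<L$ and split the series into a \emph{measured} part $\sum_{j\le N}$, a \emph{continued} part $\sum_{N<j\le L}$, and a \emph{tail} $\sum_{j>L}$. The measured part is immediate: Lemma \ref{fj_1} gives $\sum_{j=1}^N|f_j|^2\lesssim\tilde{\epsilon}^2$. The tail is handled by Lemma \ref{tail_2} together with the Weyl law $\kappa_j\asymp j^{1/3}$ of Lemma \ref{eigenfunction_est1}, which yields $\sum_{j>L}|f_j|^2\lesssim Q^2 L^{-\frac23(n+1)}$.

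The heart of the argument is the continued part. Combining Lemma \ref{fj_1} with Lemma \ref{ac_2}, every mode with $\kappa_j>\tilde{A}_1$ obeys $|f_j|^2\lesssim Q^2 e^{6R\kappa_j}\tilde{\epsilon}_1^{2\eta(\kappa_j)}$ with $\eta(\kappa)\ge c_0 e^{-\frac{\pi}{2d}\kappa}$. Here the contrast with the unperturbed case is decisive: because $\eta$ decays \emph{exponentially} in $\kappa$ rather than polynomially, the factor $\tilde{\epsilon}_1^{2\eta(\kappa)}=e^{-2\eta(\kappa)|\ln\tilde{\epsilon}_1|}$ is only useful while $\eta(\kappa)|\ln\tilde{\epsilon}_1|$ stays large, i.e. up to $\kappa\asymp\ln|\ln\tilde{\epsilon}_1|$. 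I would therefore fix the cutoff $\kappa_*=\frac{d}{\pi}\ln|\ln\tilde{\epsilon}_1|$ (any constant strictly below $\frac{2d}{\pi}$ serving equally well), for which $\eta(\kappa_*)|\ln\tilde{\epsilon}_1|\gtrsim|\ln\tilde{\epsilon}_1|^{1/2}\to\infty$ while the growth factor $e^{6R\kappa_*}=|\ln\tilde{\epsilon}_1|^{6Rd/\pi}$ is merely polynomial in $|\ln\tilde{\epsilon}_1|$; the associated index is $L_*\asymp\kappa_*^3\asymp(\ln|\ln\tilde{\epsilon}_1|)^3$. Since $e^{6R\kappa}$ is increasing and $\eta$ decreasing, for $\tilde{A}_1<\kappa_j\le\kappa_*$ one gets the uniform bound $|f_j|^2\lesssim Q^2 e^{6R\kappa_*}e^{-2\eta(\kappa_*)|\ln\tilde{\epsilon}_1|}$, in which the doubly-exponentially small factor beats every power of $|\ln\tilde{\epsilon}_1|$, so $\sum_{N<j\le L_*}|f_j|^2$ is negligible (smaller than any negative power of $\ln|\ln\tilde{\epsilon}_1|$). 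Throughout I assume $\tilde{\epsilon}_1<1$ and small enough that $\kappa_*>\tilde{A}_1$, so the continuation range is nonempty.

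I would finish with a case analysis comparing $N$ with $L_*$. If $N\ge L_*$, take $L=N$: then only the measured part and the tail survive, $\sum_{j>N}|f_j|^2\lesssim Q^2 N^{-\frac23(n+1)}$, and since $N\gtrsim(\ln|\ln\tilde{\epsilon}_1|)^3$ one factor $N^{\frac13(n+1)}\gtrsim(\ln|\ln\tilde{\epsilon}_1|)^{(n+1)}\ge(\ln|\ln\tilde{\epsilon}_1|)^{\frac13(n+1)}$ can be absorbed to produce the denominator in \eqref{stability}. If $N<L_*$, take $L=L_*$: the continued part is negligible as above and the binding term is the tail $Q^2 L_*^{-\frac23(n+1)}\asymp Q^2(\ln|\ln\tilde{\epsilon}_1|)^{-2(n+1)}$; then $N<L_*\asymp(\ln|\ln\tilde{\epsilon}_1|)^3$ gives $N^{\frac13(n+1)}(\ln|\ln\tilde{\epsilon}_1|)^{\frac13(n+1)}\lesssim(\ln|\ln\tilde{\epsilon}_1|)^{\frac43(n+1)}\le(\ln|\ln\tilde{\epsilon}_1|)^{2(n+1)}$, so again the bound is controlled by $Q^2/\bigl(N^{\frac13(n+1)}(\ln|\ln\tilde{\epsilon}_1|)^{\frac13(n+1)}\bigr)$. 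Adding the measured, continued and tail estimates in both cases yields \eqref{stability}.

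I expect the main obstacle to be precisely the sharp choice of the cutoff $\kappa_*$: one must balance the exponential growth $e^{6R\kappa}$ coming from the resolvent bound of Theorem \ref{bound_2} against the doubly-exponentially small $e^{-2\eta(\kappa)|\ln\tilde{\epsilon}_1|}$ coming from the exponentially decaying stability exponent $\eta$. It is this balance—rather than any single inequality—that forces the logarithmic rate and replaces the polynomial-in-$|\ln\epsilon_1|$ factor of the unperturbed theorem by the double logarithm $\ln|\ln\tilde{\epsilon}_1|$; verifying that both regimes of the case split collapse to the single clean denominator of \eqref{stability} is the part requiring the most care.
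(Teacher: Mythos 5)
Your proposal is correct and follows essentially the same route as the paper: the same three-part splitting of $\sum_j|f_j|^2$ using Lemma \ref{fj_1} for the measured modes, the analytic continuation of Lemma \ref{ac_2} up to a wavenumber cutoff of order $\ln|\ln\tilde{\epsilon}_1|$ for the intermediate modes, and Lemma \ref{tail_2} for the tail, finished by the same case split comparing $N$ with the cutoff. The only real difference is bookkeeping: you convert the wavenumber cutoff to an index via the Weyl law ($L_*\asymp\kappa_*^3$), a step the paper glosses over by using $\tilde{L}$ as both, and this care in fact yields a slightly sharper intermediate tail bound while landing on the same final estimate \eqref{stability}.
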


\begin{proof}
We can assume that $\tilde{\epsilon}_1 \leq e^{-1}$, otherwise the estimate is obvious. First, we link the data $\kappa^4 \|u(x,\kappa)\|^2_{L^2(\partial B_R)} + \kappa^8 \|\Delta u(x,\kappa)\|^2_{L^2(\partial B_R)}$
for large wavenumber $\kappa$ satisfying $\kappa\leq \tilde{L}$ with the given data
$\tilde{\epsilon}_1$ of small wavenumber by using the analytic continuation in Lemma
\ref{ac_2}, where $\tilde{L}$ is some large positive integer to be determined later.  By
Lemma \ref{ac_2}, we obtain 
\begin{align*}
&\kappa^4\|u(x,\kappa)\|^2_{\tilde{L}^2(\partial B_R)} + \kappa^8\|\Delta u(x,\kappa)\|^2_{\tilde{L}^2(\partial B_R)}
\lesssim Q^2e^{6R|\kappa|} \tilde{\epsilon}_1^{\eta(\kappa)}\\
&\lesssim Q^2{\rm exp}\{6R\kappa - \frac{c_2a}{a^2 + c_3}e^{c_1(\frac{a}{2} - \kappa)}
|{\ln}\tilde{\epsilon}_1|\}\\
&\lesssim  Q^2{\rm exp} \{  - \frac{c_2a}{a^2 + c_3}e^{c_1(\frac{a}{2} - \kappa)}|{\ln}\tilde{\epsilon}_1| (1 -  \frac{c_4\kappa(a^2 + c_3)}{a} e^{c_1(\kappa - \frac{a}{2})}|{\ln}\tilde{\epsilon}_1|^{-1})\}\\
&\lesssim Q^2{\rm exp} \{  - \frac{c_2a}{a^2 + c_3}e^{c_1(\frac{a}{2} - \tilde{L})}|{\ln}\tilde{\epsilon}_1| (1 -  \frac{c_4\tilde{L}(a^2 + c_3)}{a} e^{c_1(\tilde{L} - \frac{a}{2})}|{\ln}\tilde{\epsilon}_1|^{-1})\}\\
&\lesssim Q^2{\rm exp} \{  - b_0e^{-c_1\tilde{L}}|{\ln}\tilde{\epsilon}_1| (1 - b_1\tilde{L} e^{c_1\tilde{L} }|{\ln}\tilde{\epsilon}_1|^{-1})\},
\end{align*}
where $c, c_i, i=1,2$ and $b_0, b_1$ are constants. Let
\begin{align*}
\tilde{L} = 
\begin{cases}
\left[\frac{1}{2c_1}\ln|\ln \tilde{\epsilon}_1|\right], &\quad N\leq \frac{1}{2c_1} \ln|\ln\tilde{\epsilon}_1|,\\
N, &\quad N>  \frac{1}{2c_1}\ln|\ln\tilde{\epsilon}_1|.
\end{cases}
\end{align*}

If $N\leq  \frac{1}{2c_1}\ln|\ln\tilde{\epsilon}_1|$, we obtain for $\tilde{\epsilon}_1$
sufficiently small that 
\begin{align*}
\kappa^4\|u(x,\kappa)\|^2_{L^2(\partial B_R)} + \kappa^8\|\Delta u(x,\kappa)\|^2_{L^2(\partial B_R)}&\lesssim Q^2{\rm exp} \{  - b_0e^{-c_1\tilde{L}}|{\ln}\tilde{\epsilon}_1| (1 - b_1\tilde{L} e^{c_1\tilde{L} }|{\ln}\tilde{\epsilon}_1|^{-1})\}\\
& \lesssim Q^2\exp\{-\frac{1}{2}b_0e^{-c_1\tilde{L}}|\ln \tilde{\epsilon}_1|\}.
\end{align*}
Noting $e^{-x}\leq \frac{(2n+3)!}{x^{2n+3}}$ for $x>0$, we obtain
\begin{align*}
\sum_{j=N+1}^{\tilde{L}}\Big(\kappa_j^4\|u(x,\kappa_j)\|^2_{L^2(\partial B_R)} + \kappa_j^8\|\Delta u(x,\kappa_j)\|^2_{L^2(\partial B_R)}\Big) \lesssim Q^2
\tilde{L}e^{(2n+3)c_1\tilde{L}}|\ln\tilde{\epsilon}_1|^{-(2n+3)}.
\end{align*}

Taking $\tilde{L}=\frac{1}{2c_1}\ln|\ln\tilde{\epsilon}_1|$, combining the above estimates
and  Lemma \ref{tail_2}, we get
\begin{align*}
&\|f\|_{L^2( B_R)}^2\lesssim \sum_{j=1}^N |f_j|^2+\sum_{j=N+1}^{\tilde{L}} |f_j|^2 + \sum_{j=\tilde{L}+1}^{+\infty} |f_j|^2\\
&\lesssim \sum_{j=1}^N \Big(\kappa_j^4\|u(x,\kappa_j)\|^2_{L^2(\partial B_R)} + \kappa_j^8\|\Delta u(x,\kappa_j)\|^2_{L^2(\partial B_R)} \Big)\\
&\quad +\sum_{j=N+1}^{\tilde{L}}  \Big(\kappa_j^4\|u(x,\kappa_j)\|^2_{L^2(\partial B_R)} + \kappa_j^8\|\Delta u(x,\kappa_j)\|^2_{L^2(\partial B_R)}\Big)+ \frac{1}{\tilde{L}^{\frac{2}{3}(n+1)}}\|f\|^2_{H^{n+1}(B_R)}\\
&\lesssim \tilde{\epsilon}^2 + \tilde{L}Q^2e^{(2n+3)c_1\tilde{L}}|\ln\tilde{\epsilon}_1|^{-(2n+3)}+ \frac{Q^2}{\tilde{L}^{\frac{2}{3}(n+1)}}\\
& \lesssim \tilde{\epsilon}^2 + Q^2\left((\ln|\ln\tilde{\epsilon}_1|)|\ln\tilde{\epsilon}_1|^{\frac{2n+3}{2}}|\ln\tilde{\epsilon}_1|^{-(2n+3)}+(\ln|\ln\tilde{\epsilon}_1|)^{-\frac{2}{3}(n+1)}\right)\\
& \lesssim \tilde{\epsilon}^2 + Q^2\left((\ln|\ln\tilde{\epsilon}_1|)|\ln\tilde{\epsilon}_1|^{-\frac{2n+3}{2}}+(\ln|\ln\tilde{\epsilon}_1|)^{-\frac{2}{3}(n+1)}\right)\\
& \lesssim \tilde{\epsilon}^2 + Q^2(\ln|\ln\tilde{\epsilon}_1|)^{-\frac{2}{3}(n+1)}\\
& \lesssim \tilde{\epsilon}^2 +
\frac{Q^2}{N^{\frac{1}{3}(n+1)}(\ln|\ln\tilde{\epsilon}_1|)^{\frac{1}{3}(n+1)}},
\end{align*}
where we have used $|\ln\tilde{\epsilon}_1|^{1/2}\geq \ln|\ln\tilde{\epsilon}_1|$ for
sufficiently small $\tilde{\epsilon}_1$.

If $N>  \frac{1}{2c_1}\ln|\ln\tilde{\epsilon}_1|$, we have from  Lemma \ref{tail} that 
\begin{align*}
\|f\|_{L^2( B_R)}^2 &\lesssim \sum_{j=1}^N |f_j|^2+ \sum_{j=N+1}^{+\infty} |f_j|^2
\lesssim\tilde{\epsilon}^2 + \frac{Q^2}{N^{\frac{2}{3}(n+1)}}\\
&\lesssim\tilde{\epsilon}^2 +
\frac{Q^2}{N^{\frac{1}{3}(n+1)}(\ln|\ln\tilde{\epsilon}_1|)^{\frac{1}{3}(n+1)}},
\end{align*}
which completes the proof.
\end{proof}

It is also clear to note that the stability \eqref{stability} consists of two parts: the data discrepancy and
the high wavenumber tail. The former is of the Lipschitz type. The latter
decreases as $N$ increases which makes the problem have an almost Lipschitz
stability. The result reveals that the problem becomes more stable when higher
wavenumber data is used. Compared with \eqref{stability_1}, the stability \eqref{stability} has a 
double logarithmic type high frequency tail, which makes the problem more ill-posed. The reason is that, in the presence of the zeroth order
perturbation, the resonance-free region obtained in Theorem \ref{bound_2} is not as good as that obtained in Theorem \ref{free_estimate}.

\section{Conclusion}\label{con}

We have presented stability results on the inverse source problem for the biharmonic operators without and with a zeroth order perturbation. The analysis requires the Dirichlet data only at multiple discrete wavenumbers. The increasing
stability is achieved to reconstruct the source term, and it consists of the data discrepancy and the high wavenumber tail of the source function. The result shows that the ill-posedness of the inverse source problem decreases as the wavenumber increases for the data. A possible continuation of this work is to extend the stability to the case of poly-harmonic operators.  A related but more challenging problem is to study the stability of the inverse potential problem which is to determine $V$.


\begin{thebibliography}{99}

\bibitem{AP}
T. Aktosun and V. Papanicolaou, Time evolution of the scattering data for a fourth-order linear differential operator, Inverse Problems, 24 (2008), 055013. 

\bibitem{Assy}
Y. Assylbekov, Inverse problems for the perturbed polyharmonic operator with coefficients in Sobolev spaces with non-positive order, Inverse Problems, 32 (2016), 105009. 

\bibitem{BLLT-IP-15}
G. Bao, P. Li, J. Lin, and F. Triki, Inverse scattering problems with multi-frequencies, Inverse Problems 31 (2015), 093001. 

\bibitem{blz}
G. Bao, P. Li, and Y. Zhao, Stability for the inverse source problems in elastic and electromagnetic waves, J. Math. Pures Appl., 134 (2020), 122--178.

\bibitem{blt}
G. Bao, J. Lin, and F. Triki, A multi-frequency inverse source problem, J. Differential Equations, 249 (2010), 3443--3465.

\bibitem{CIL}
J. Cheng, V. Isakov, and S. Lu, Increasing stability in the inverse source problem with many frequencies, J. Differential Equations, 260 (2016), 4786--4804.

\bibitem{CH}
A. Choudhury and H. Heck, Stability of the inverse boundary value problem for the biharmonic operator: Logarithmic estimates, J. Inverse Ill-Posed Probl., 25 (2017), 251--263. 

\bibitem{CK}
D. Colton and R. Kress, Inverse Acoustic and Electromagnetic Scattering Theory, Applied Mathematical Sciences, vol. 93, Springer-Verlag, Berlin, 1998.

\bibitem{ei-18}
M. Entekhabi and V. Isakov, On increasing stability in the two dimensional inverse source scattering problem with many frequencies, Inverse Problems, 34 (2018), 055005.

\bibitem{ei-19}
M. Entekhabi and V. Isakov, Increasing stability in acoustic and elastic inverse source problems, SIAM J. Math. Anal., 52 (2020), 5232--5256.

\bibitem{Yao}
H. Feng, A. Soffer, Z. Wu, and X. Yao, Decay estimates for higher-order elliptic operators, Trans. Amer. Math. Soc., 373 (2020), 2805--2859.

\bibitem{GGS}
F. Gazzola, H.-C. Grunau, and G. Sweers, Polyharmonic Boundary Value Problems, Positivity Preserving and Nonlinear Higher Order Elliptic Equations in Bounded Domains, Lecture Notes in Mathematics, Springer-Verlag, Berlin, Heidelberg, 2010.

\bibitem{Xu}
Y. Gong and X. Xu, Inverse random source problem for biharmonic equation in two dimensions, Inverse Problems and Imaging, 13 (2019), 635--652. 

\bibitem{Tao}
A. Hassell and T. Tao, Upper and Lower Bounds for Normal Derivatives of Dirichlet Eigenfunctions,
arXiv preprint math/0202140, 2002.

\bibitem{isakov}
V. Isakov, Completeness of products of solutions and some inverse problems for PDE, J. Differential Equations, 92 (1991), 305--316.

\bibitem{Iwasaki}
K. Iwasaki, Scattering theory for 4th order differential operators: I–II, Japan. J. Math., 14 (1988), 1--96.

\bibitem{KLU}
K. Krupchyk, M. Lassas, and G. Uhlmann, Inverse boundary value problems for the perturbed polyharmonic operator, Trans. Amer. Math. Soc., 366 (2014), 95--112. 

\bibitem{LKU}
M. Lassas, K. Krupchyk, and G. Uhlmann, Determining a first order perturbation of the biharmonic operator by partial boundary measurements,  J. Funct. Anal., 262 (2012), 1781--1801.

\bibitem{li2017increasing}
P. Li and G. Yuan, Increasing stability for the inverse source scattering problem with multi-frequencies, Inverse Problems and Imaging, 11 (2017), 745--759. 

\bibitem{LZZ}
P. Li, J. Zhai, and Y. Zhao, Stability for the acoustic inverse source problem in inhomogeneous media, SIAM J. Appl. Math., to appear. 

\bibitem{Mayboroda}
S. Mayboroda and V. Maz’ya, Boundedness of the gradient of a solution and Wiener test of order one for the biharmonic equation, Inventiones Mathematicae, 175 (2009), 287--334.

\bibitem{MMM}
N.V. Movchan, R.C. McPhedran, A.B. Movchan, and C.G. Poulton, Wave scattering by platonic grating stacks, Proc. R. Soc. A, 465 (2009), 3383--3400.

\bibitem{THS}
T. Tyni, M. Harju, and V. Serov, Recovery of singularities in a fourth-order operator on the line from limited data Inverse Problems, 32 (2016), 075001. 

\bibitem{ts_ipi}
T. Tyni and V. Serov, Scattering problems for perturbations of the multidimensional biharmonic operator, Inverse Problems and Imaging, 12 (2018), 205--227. 

\bibitem{Weyl}
H. Weyl, Das asymptotische verteilungsgesetz der eigenwerte linear partieller differentialgleichungen, Math. Ann., 71 (1911), 441--479.

\bibitem{Yang}
Y. Yang, Determining the first order perturbation of a bi-harmonic operator on bounded and unbounded domains from partial data, J. Differential Equations, 257 (2014), 3607--3639.

\end{thebibliography}
\end{document}